\documentclass[a4, 12pt, 
]{amsart}

\usepackage[all]{xy}
\usepackage{mathrsfs, amsthm, amscd, amsmath, amssymb, graphicx}

\usepackage[dvipdfmx]{hyperref}

\newtheorem{theo}{Theorem}[section]

\newtheorem{lemm}[theo]{Lemma}
\newtheorem{cor}[theo]{Corollary}
\newtheorem{claim}[theo]{Claim}
\newtheorem{prob}[theo]{Problem}
\newtheorem{conj}[theo]{Conjecture}

\numberwithin{equation}{section}

\theoremstyle{definition}
\newtheorem{defi}[theo]{Definition}

\newtheorem{step}{Step}

\theoremstyle{remark}
\newtheorem{rem}[theo]{Remark}




\newcommand{\Ker}[0]{\operatorname{Ker}}
\newcommand{\Spn}[0]{\operatorname{Spn}}
\newcommand{\Hom}[0]{\operatorname{Hom}}
\newcommand{\Alb}[0]{\operatorname{Alb}}

\newcommand{\End}[0]{\operatorname{End}}

\newcommand{\deldel}{\sqrt{-1}\partial \overline{\partial}}

\newcommand{\Rur}[3]{R_{#1}(#2, \bar{#2}, #3, \bar{#3})}



\usepackage{geometry}
\geometry{left=30mm,right=30mm,top=43mm,bottom=43mm}



\begin{document}

\title[K\"ahler manifolds with semi-positive holomorphic sectional curvature]
{On morphisms of compact K\"ahler manifolds \\
with semi-positive holomorphic sectional curvature}

\author{Shin-ichi MATSUMURA}

\address{Mathematical Institute, Tohoku University, 
6-3, Aramaki Aza-Aoba, Aoba-ku, Sendai 980-8578, Japan.}

\email{{\tt mshinichi-math@tohoku.ac.jp, mshinichi0@gmail.com}}

\date{\today, version 0.01}

\renewcommand{\subjclassname}{%
\textup{2010} Mathematics Subject Classification}
\subjclass[2010]{Primary 53C25, Secondary 32Q10, 14M22.}

\keywords
{Holomorphic sectional curvature, 
Bisectional curvature, 
Scalar curvature, 
Partially positive curvature, 
Semi-positivity of curvature,
RC positivity, 
Truly flat tangent vectors, 
Structure theorems, 
Uniformization theorems, 
Maximal rationally connected fibrations, 
Rationally connectedness, 
Albanese maps, 
Abelian varieties, 
Complex tori. }

\maketitle

\begin{abstract}
In this paper, with the aim of establishing a structure theorem 
for a compact K\"ahler manifold $X$ with semi-positive holomorphic sectional curvature, 
we study a morphism $\phi: X \to Y$ to 
a compact K\"ahler manifold $Y$ with pseudo-effective canonical bundle. 
We prove that the morphism $\phi$ is always smooth (that is, a submersion), 
the image $Y$ admits a finite \'etale cover $T \to Y$ by a complex torus $T$, 
and further that all the fibers are isomorphic when $X$ is projective. 
Moreover, by applying a modified method to maximal rationally connected fibrations, 
we show that $X$ is rationally connected,   
if $X$ is projective and $X$ has no truly flat tangent vectors at some point 
(which is satisfied when the holomorphic sectional curvature is quasi-positive). 
This result gives a generalization of Yau's conjecture. 
As a further application, we obtain a uniformization theorem 
for compact K\"ahler surfaces with semi-positive holomorphic sectional curvature. 
\end{abstract}


\section{Introduction}\label{Sec-1}
One of the fundamental and important problems in differential geometry 
is to establish structure theorems or classifications 
for varieties satisfying certain curvature conditions. 
The Frankel conjecture, 
which has been solved by Siu-Yau in \cite{SY80}  and by Mori in  \cite{Mor79}, 
states that a smooth  projective variety with {\textit{positive bisectional curvature}} 
is isomorphic to the projective space (see \cite{Mor79} for the Hartshorne conjecture). 
As one of the extensions of the Frankel conjecture, 
it is a significant problem to consider the geometry 
of {\textit{semi-positive bisectional curvature}} 
(or more generally the geometry of nef tangent bundles). 
In their paper \cite{HSW81}, 
Howard-Smyth-Wu studied a structure theorem for a compact K\"ahler manifold $M$ 
with semi-positive bisectional curvature  
and they showed that $M$ can be decomposed into  
a \lq \lq flat" manifold $B$ and a \lq \lq quasi-positively curved" manifold $M'$ 
(see also \cite{CG71} and \cite{CG72}). 
Precisely speaking, they proved that $M$ admits a locally trivial morphism $f : M \to B$ 
to a flat K\"ahler manifold $B$ such that 
the fiber $M'$ of $f$ is a smooth projective variety with quasi-positive bisectional curvature. 
The flat manifold $B$ is a complex torus up to finite \'etale covers, 
and thus the geometry of $M$ can be reduced to 
the smooth projective variety $M'$ with quasi-positive bisectional curvature, 
thanks to their structure theorem. 
Further, it can also be  proven that 
the universal cover of $M$ is isomorphic to 
the product $M' \times \mathbb{C}^m$
(see \cite{DPS94} for compact K\"ahler manifolds with nef tangent bundle). 
After the work of Howard-Smyth-Wu, in his paper \cite{Mok88},  
Mok studied smooth projective varieties with quasi-positive bisectional curvature and 
he showed that the fiber $M'$ of $f$ is isomorphic to the product of 
projective spaces and compact Hermitian symmetric manifolds 
(see \cite{CP91} for the Campana-Peternell conjecture).

This paper is devoted to studies of compact K\"ahler manifolds 
whose  {\textit{holomorphic sectional curvature}} is semi-positive or quasi-positive, 
motivated by generalizing Howard-Smyth-Wu's structure theorem for manifolds with semi-positive bisectional curvature 
and Mok's result for the geometry of quasi-positive bisectional curvature. 
\vspace{0.2cm}

The first contribution of this paper is 
concerned with the solution and its generalization 
of the following conjecture posed by Yau in \cite{Yau82}, 
which gives a relation between the \lq \lq strict" positivity of holomorphic sectional curvature 
and the geometry of $X$ (rationally connectedness). 
Yau's conjecture can be seen as an analogy of Mok's result 
in the studies of holomorphic sectional curvature
and it corresponds to the geometry of the fiber $M'$ 
appearing in Howard-Smyth-Wu's structure theorem.

\begin{conj}[Yau's conjecture for projective varieties]\label{conj-Yau}
If a smooth projective variety $X$ admits a K\"ahler metric with positive holomorphic sectional curvature, 
then $X$ is rationally connected $($that is,  two arbitrary points can be connected by a rational curve$)$. 
\end{conj}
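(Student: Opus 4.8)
The plan is to argue by contradiction via the maximal rationally connected (MRC) fibration, using the structure theorem from the abstract as the main input. Suppose $X$ is projective and carries a K\"ahler metric $\omega$ with positive holomorphic sectional curvature but is not rationally connected. I record two consequences of the curvature hypothesis. First, averaging the holomorphic sectional curvature over directions shows that the scalar curvature of $\omega$ is positive; hence $-K_X$ has positive degree with respect to $\omega$, so $K_X$ is not pseudo-effective and $X$ is uniruled (Boucksom--Demailly--P\u{a}un--Peternell). Second, at every point every nonzero cotangent vector has a direction along which the curvature induced on $\Omega^1_X$ is negative, and a maximum-principle argument (this is Yang's remark that positive holomorphic sectional curvature makes $T_X$ RC-positive) then gives $H^0\bigl(X,(\Omega^1_X)^{\otimes m}\bigr)=0$ for all $m\ge 1$. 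Since positive holomorphic sectional curvature is inherited by pullback metrics, this vanishing also holds on every finite \'etale cover of $X$; and $h^0(\Omega^1)$ is a birational invariant of smooth projective varieties.

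Let $\pi:X\dashrightarrow Z$ be the MRC fibration. By Graber--Harris--Starr the base $Z$ is not uniruled, hence $K_Z$ is pseudo-effective, and $\dim Z\ge 1$ since $X$ is not rationally connected. I claim this is impossible. Suppose first that $\pi$ were an honest morphism $\phi:X\to Y$ with connected fibers onto a smooth projective variety $Y$ with $K_Y$ pseudo-effective and $\dim Y\ge 1$. The structure theorem would then make $\phi$ smooth and $Y$ a complex torus up to finite \'etale cover. Taking an \'etale cover $Y'\to Y$ with $Y'$ a torus and setting $X':=X\times_Y Y'$ (a connected finite \'etale cover of $X$), the relative cotangent sequence of $\phi':X'\to Y'$ exhibits $(\phi')^*\Omega^1_{Y'}\cong\mathcal{O}_{X'}^{\oplus\dim Y'}$ as a subbundle of $\Omega^1_{X'}$, so $H^0(X',\Omega^1_{X'})\ne 0$ --- contradicting the vanishing of the first paragraph applied to $X'$. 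Resolving the indeterminacy of $\pi$ does produce a modification $\mu:\widetilde X\to X$ and a morphism $\phi:\widetilde X\to Y$ with $Y$ a smooth model of $Z$ and $K_Y$ pseudo-effective, and birational invariance of $h^0(\Omega^1)$ would transport the final contradiction back from an \'etale cover of $\widetilde X$ to one of $X$; so everything comes down to making this $\phi$ eligible for the structure theorem.

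That eligibility is the main obstacle and, I expect, the heart of the paper: $\widetilde X$ is merely a blow-up of $X$ and in general admits no K\"ahler metric with semi-positive holomorphic sectional curvature, so the structure theorem cannot be invoked on $\phi:\widetilde X\to Y$ as it stands. Overcoming this should require a ``modified'' construction of the rationally connected quotient that never leaves the category of honest morphisms out of $X$ itself --- working with families of minimal rational curves on the uniruled manifold $X$ to build directly, whenever $X$ fails to be rationally connected, a surjective morphism $X\to Y$ with connected fibers and $K_Y$ pseudo-effective. The hypothesis that $X$ has no truly flat tangent vectors at some point is presumably exactly what powers such a morphism-level argument: it is what forces $K_X$ off the pseudo-effective cone in the first place, and it is the sort of pointwise positivity that can be propagated along the fibers of the fibration one constructs, which is what the curvature estimates behind the structure theorem need. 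Granting such a morphism, the argument of the previous paragraph applies verbatim, the MRC base of $X$ is a point, and $X$ is rationally connected --- which is Yau's conjecture.
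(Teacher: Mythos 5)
You have correctly isolated the central obstruction --- the MRC fibration $\phi\colon X\dashrightarrow Y$ is only an almost holomorphic map, and a log resolution $\widetilde X\to X$ carries no K\"ahler metric with semi-positive holomorphic sectional curvature, so the structure theorem cannot be invoked on $\widetilde X$. But your guess about how the paper closes this gap is not what happens, and the proposal stops exactly where the real work begins. You speculate that one should ``never leave the category of honest morphisms out of $X$ itself'' by constructing, from families of minimal rational curves, a surjective morphism $X\to Y$ with $K_Y$ pseudo-effective. No such morphism is ever produced. Instead, the paper's Theorem~\ref{thm-mainn} keeps the rational map as is, takes a resolution $\tau\colon\Gamma\to X$ of its indeterminacy with $\bar\phi\colon\Gamma\to Y$, and pushes the line bundle forward: $L:=\tau_*\bar\phi^*K_Y$ is a genuine line bundle on $X$ which agrees with $\phi^*K_Y$ over the large open set $X_1$ where $\phi$ is defined. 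One then has an injective sheaf morphism $L\to\Lambda^m\Omega_X$ and can run the whole Step~1--4 machinery of Theorem~\ref{thm-main} --- the singular hermitian metric $G$, the decomposition $\pi^*\sqrt{-1}\Theta_G=\gamma+[E]$, the scalar-curvature/Fubini/Stokes computation, and the positivity-of-$K_Y$ input --- \emph{on $X$ itself}, where the metric $g$ with semi-positive holomorphic sectional curvature lives. The payoff is that the horizontal tangent vectors over $X_1$ are truly flat (Claim~\ref{claim-truly}), which immediately gives $\dim Y\le\dim X-n_{\rm tf}(X,g)$ (Theorem~\ref{thm-sum}); with $n_{\rm tf}=\dim X$ the base is a point.

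A secondary point: your route through the vanishing $H^0(X',(\Omega_X^1)^{\otimes m})=0$ via Yang's RC-positivity is an extra ingredient that the paper does not use as an input. In the paper that sort of vanishing is an \emph{output} (Corollary~\ref{cor-mainnnn}, via the Albanese map and Theorem~\ref{thm-sum}), not something fed in to derive a contradiction. This matters because Yang's RC-positivity argument needs strict positivity, whereas the paper's direct argument through truly flat vectors works under the weaker hypothesis that there are no truly flat vectors at some point --- precisely the generality the paper is after, since quasi-positivity already puts one outside the scope of Yang's method. So even for the narrow Conjecture~\ref{conj-Yau} your contradiction scheme would work \emph{if} you could make the MRC map a morphism, but that is the unsolved step in your proposal, and the RC-positivity detour is not how the paper resolves it.
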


In their paper \cite{HW15}, Heier-Wong considered Yau's conjecture  for projective varieties 
under the weaker assumption that the holomorphic sectional curvature is quasi-positive 
(that is, it is semi-positive everywhere and positive at some point). 
We emphasize that it is essentially important to consider Yau's conjecture for quasi-positive holomorphic sectional curvature
from the viewpoint of structure theorems, 
since the bisectional curvature of the fiber $M'$ is quasi-positive, but not necessarily positive everywhere. 
Yang affirmatively solved Yau's conjecture even for the case of compact K\"ahler manifolds 
by introducing the notion of RC positivity in \cite{Yan17a} 
(see \cite{Yan18c} and references therein for recent progress of RC positivity), 
but it seems to be quite difficult to apply his method to the case of quasi-positive holomorphic sectional curvature.

In this paper, 
we obtain a generalization of Yau's conjecture, Heier-Wong's result, and Yang's result 
(see Theorem \ref{thm-mai}), 
by using an idea in \cite{HW15} and by developing techniques for a partial positivity and certain flatness. 
This theorem can be seen as a version of Mok's result for holomorphic sectional curvature, 
and further it gives a more precise relation 
between the positivity of holomorphic sectional curvature 
and the dimension of images of maximal rationally connected (MRC for short) fibrations 
(which measures how far $X$ is from rationally connectedness). 
See \cite{Cam92} and \cite{KoMM92} for MRC fibrations. 

\begin{theo}\label{thm-mai}
Let $(X, g)$ be a compact K\"ahler manifold such that $X$ is projective and 
the holomorphic sectional curvature  is semi-positive. 
Let  $\phi: X \dashrightarrow Y$ be a MRC fibration of $X$. 
Then we have  
$$
\dim X -\dim Y \geq n_{{\rm{tf}}}{(X, g)}. 
$$
In particular, the manifold $X$ is rationally connected 
if $n_{{\rm{tf}}}{(X, g)}=\dim X$ $($which is satisfied if the holomorphic sectional curvature is quasi-positive$)$. 
\end{theo}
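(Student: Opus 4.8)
The plan is to establish the inequality in the equivalent form $\dim F \geq n_{{\rm{tf}}}(X,g)$, where $F$ is a general fiber of the MRC fibration, so that $\dim F = \dim X - \dim Y$. The ``in particular'' part then follows at once: $n_{{\rm{tf}}}(X,g) = \dim X$ forces $\dim F = \dim X$, hence $F = X$ and $X$ is rationally connected, while quasi-positivity of the holomorphic sectional curvature yields $n_{{\rm{tf}}}(X,g) = \dim X$ because the truly flat subspace of the tangent space vanishes at a point of positive curvature, hence on a Zariski-dense open set by semicontinuity.

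I would first dispose of the trivial case $\dim Y = 0$ and henceforth assume $\dim Y > 0$. Since $\phi$ is a MRC fibration, $Y$ is not uniruled by Graber--Harris--Starr, so $K_Y$ is pseudo-effective by Boucksom--Demailly--P\u{a}un--Peternell. Replacing $X$ by a resolution of the rational map $\phi$, I may assume that $\phi \colon X \to Y$ is a morphism with connected fibers, smooth over a Zariski-dense open set $X_0 \subseteq X$; the truly flat distribution $\mathcal{D} \subseteq T_X|_{X_0}$ and the semi-positivity of the holomorphic sectional curvature are used only over $X_0$, where, by the properties of the truly flat distribution developed earlier in the paper, $\mathcal{D}$ is a holomorphic subbundle of rank $\dim X - n_{{\rm{tf}}}(X,g)$ carrying a flat Hermitian structure, with RC-positive quotient $Q := T_X|_{X_0}/\mathcal{D}$.

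The heart of the argument is to show that the composite $\mathcal{D} \hookrightarrow T_X \xrightarrow{d\phi} \phi^*T_Y$ is generically surjective; granting this, $\dim Y \leq \operatorname{rank}\mathcal{D} = \dim X - n_{{\rm{tf}}}(X,g)$, which is exactly the assertion. Suppose it fails, let $\mathcal{G} \subsetneq \phi^*T_Y$ be the saturation of the image, and put $\mathcal{Q}' := \phi^*T_Y/\mathcal{G} \neq 0$. Since $d\phi(\mathcal{D}) \subseteq \mathcal{G}$, the sheaf $\mathcal{Q}'$ is also a quotient of $Q$, hence RC-positive over $X_0$, and therefore so is the line bundle $\det\mathcal{Q}'$. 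On the other hand $d\phi(\mathcal{D})$, being a quotient of the flat Hermitian bundle $\mathcal{D}$, is flat Hermitian, so $\det\mathcal{G}$ is the sum of a numerically trivial class and an effective one, hence pseudo-effective; together with $\phi^*K_Y = (\det\mathcal{Q}')^{-1}\otimes(\det\mathcal{G})^{-1}$ this would make $(\det\mathcal{Q}')^{-1} = \phi^*K_Y \otimes \det\mathcal{G}$ pseudo-effective, contradicting the RC-positivity of $\det\mathcal{Q}'$. Concretely, the contradiction is realized by producing, through a general point of $X_0$, rational curves tangent to the RC-positive directions of $Q$ via a bend-and-break argument, which is the ``modified method for MRC fibrations'' of the title.

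The step I expect to be the main obstacle is precisely turning this heuristic into a rigorous contradiction: $\mathcal{D}$ and $Q$ are a priori defined only over $X_0$, so the pointwise RC-positivity of $\det\mathcal{Q}'$ there does not by itself contradict pseudo-effectivity of $K_Y$ on all of $Y$, and one must control the behaviour over the complement of $X_0$ — the indeterminacy locus of $\phi$ and the non-smooth locus. Handling this is where the genuinely new input is needed: one produces the family of rational curves tangent to the RC-positive directions, arranges that its general member passes through a general point of $X_0$ transversally to $\mathcal{D}$ and does not degenerate into the complement, and checks that the fibration it defines is dominated by (a coarsening of) the MRC fibration, so that transversality forces $\dim F \geq n_{{\rm{tf}}}(X,g)$. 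Verifying the non-degeneration of this family and its compatibility with the MRC fibration is the delicate part.
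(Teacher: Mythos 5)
There is a genuine gap, and it starts at the very first substantive step. You invoke ``the properties of the truly flat distribution developed earlier in the paper'' to assert that over $X_0$ the truly flat vectors form a holomorphic subbundle $\mathcal{D}\subseteq T_X$ of rank $\dim X-n_{\rm{tf}}(X,g)$ carrying a flat Hermitian structure with RC-positive quotient $Q$. None of this is established in the paper, and it is not true in general: the paper only defines the pointwise subspace $V_{{\rm flat},p}\subseteq T_{X,p}$ and observes that $n_{\rm{tf}}(X,g)_p$ is lower semicontinuous, so $\dim V_{{\rm flat},p}$ is upper semicontinuous and can jump up along proper closed sets. That is far from giving a holomorphic distribution, let alone one that is Hermitian flat with RC-positive quotient. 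Consequently everything built on $\mathcal{D}$, $Q$, the saturated image $\mathcal{G}$, the quotient $\mathcal{Q}'$, and the bend-and-break construction of rational curves tangent to $Q$ is unsupported; you are candid that this last part is heuristic, but the problem is upstream of that.

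More fundamentally, the direction of the argument is inverted relative to the paper. You want to prove that the truly flat directions \emph{surject} onto $\phi^{*}T_Y$. The paper instead proves the reverse inclusion: along the smooth locus of $\phi$, \emph{every} horizontal direction (that is, the orthogonal complement of $T_{X/Y}$, identified with $\phi^{*}T_Y$) is a truly flat tangent vector. This is the content of Claim \ref{claim-truly} inside Theorem \ref{thm-main}/\ref{thm-mainn}, and it is obtained not by a rational-curve argument but analytically: one constructs a singular metric $G$ on $\phi^{*}K_Y^{\vee}$ (for an almost holomorphic map $\phi$, by pushing forward $\bar\phi^{*}K_Y$ from a resolution of indeterminacy back to $X$ so as not to lose the K\"ahler metric $g$), chooses an orthonormal basis of the horizontal space that successively minimizes the holomorphic sectional curvature, and then shows by a scalar-curvature integration and Stokes/Fubini (Claims \ref{claim-semi}, \ref{claim-key}, \ref{claim-flat}) that $\sqrt{-1}\Theta_G\equiv 0$ and that the horizontal vectors satisfy $H_g=0$ and $\Rur{g}{v}{e_i}=0$ for all $v$, hence are truly flat by Lemma \ref{lemm-flat}. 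Once this is in hand, Theorem \ref{thm-sum} follows immediately: pick a point $p$ in the smooth locus where $n_{\rm{tf}}(X,g)_p$ attains its supremum $n_{\rm{tf}}(X,g)$ (which exists by lower semicontinuity and the fact that the bad locus is Zariski closed); the inclusion $\phi^{*}T_Y\subseteq V_{{\rm flat},p}$ gives $\dim Y\le\dim V_{{\rm flat},p}=\dim X-n_{\rm{tf}}(X,g)$. This simple pointwise dimension count is what replaces your generic-surjectivity claim, and it requires none of the machinery you posit. Your opening reductions (GHS and BDPP giving $K_Y$ pseudo-effective, the ``in particular'' deduction) do match the paper, but the core of the argument does not close.
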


Here the invariant $n_{{\rm{tf}}}{(X, g)}$ is defined by 
$$
n_{{\rm{tf}}}{(X, g)}:=\dim X - \inf_{p \in X} \dim V_{{\rm{flat}},p},  
$$
where $V_{{\rm{flat}},p}$ is the subspace of the tangent space $T_{X,p}$ at $p$
consisting of all the truly flat tangent vectors (see subsection \ref{subsec-2-2} for the precise definition). 
The invariant $n_{{\rm{tf}}}{(X, g)}$ can be seen as an analogue of the numerical Kodaira dimension 
in terms of truly flat tangent vectors introduced in \cite{HLWZ}. 
The condition of $n_{{\rm{tf}}}{(X, g)}=\dim X$ 
(that is, there is no truly flat tangent vectors at some point) 
is a weaker assumption than the quasi-positivity, 
but it works in a more flexible manner from the viewpoint of our argument. 
\vspace{0.2cm}

The second contribution of this paper 
is a partial answer for the following structure conjecture. 
The following conjecture, which is a revised version of \cite[Conjecture 1.1]{Mat18}, 
asks a structure theorem for compact K\"ahler manifolds 
with semi-positive holomorphic sectional curvature and 
it can be seen as a natural generalization of Howard-Smyth-Wu's structure theorem.

In this paper, we affirmatively solve Conjecture \ref{conj-str} 
under the assumption that a MRC fibration of a smooth projective variety $X$ 
can be chosen to be a morphism without indeterminacy locus (see Theorem \ref{thm-main}). 
This assumption is satisfied when  $X$ has the nef anti-canonical bundle 
by the deep result of \cite{CH17} (see Corollary \ref{cor-CH}). 
Moreover we solve Conjecture \ref{conj-str} for compact K\"ahler surfaces without any assumptions 
(see Corollary \ref{cor-main}). 
 

\begin{conj}[{cf. \cite{HSW81} and \cite[Conjecture 1.1]{Mat18}}]\label{conj-str}
Let $X$ be a compact K\"ahler manifold with semi-positive holomorphic sectional curvature. 
\vspace{0.2cm}\\
$(1)$
Then there exists a smooth morphism $X \to Y$ with the following properties\,$:$
\begin{itemize}
\item The morphism $X \to Y$ is locally trivial $($that it, all the fibers $F$ are isomorphic$)$. 
\item The fiber $F$ is projective and rationally connected. 
\item $Y$ is a compact K\"ahler manifold with flat metric. 
\end{itemize}
In particular, there exist a complex torus $T$ and a finite \'etale cover $T \to Y$ such that 
the fiber product $X^*:=X \times_Y T $ admits 
a locally trivial morphism $X^*=X \times_Y T  \to T$ to the complex torus $T$ 
with the rationally connected fiber $F$ and that it satisfies the following commutative diagram\,$:$
\begin{equation*}
\xymatrix@C=40pt@R=30pt{
X^{*}=X \times_Y T \ar[d] \ar[r]^{} & T\ar[d]^{} \\ 
X \ar[r]^{}  &  Y.\\   
}
\vspace{0.1cm}
\end{equation*}
$(2)$ Moreover we have the decomposition 
$$
X_{\rm{univ}} \cong \mathbb{C}^m \times F, 
$$
where $X_{\rm{univ}}$ is the universal cover of $X$ and 
$F$ is the rationally connect fiber. 

In particular, the fundamental group of $X$ is an extension of a finite group by $\mathbb{Z}^{2m}$. 
\end{conj}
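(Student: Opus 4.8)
The plan is to derive Conjecture~\ref{conj-str} from Theorem~\ref{thm-mai} together with the smoothness theorem for morphisms onto a base with pseudo-effective canonical bundle (the first main result of this paper), in the case where a maximal rationally connected fibration of $X$ is realized by an honest morphism (which, as the excerpt records, holds when $-K_X$ is nef by \cite{CH17}; see Corollary~\ref{cor-CH}).

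First I would fix an MRC fibration $\phi\colon X\dashrightarrow Y$ and, under that hypothesis, take it to be a surjective morphism with connected fibers onto a smooth compact K\"ahler manifold $Y$. By maximality of the MRC construction $Y$ is not uniruled, so by the Boucksom--Demailly--P\u{a}un--Peternell characterization the canonical bundle $K_Y$ is pseudo-effective; this is exactly the input required to apply the first main result to $(X,g)$ and $\phi$. Hence $\phi$ is a smooth morphism (a submersion), and, being proper, a $C^\infty$ fiber bundle by Ehresmann's theorem; moreover $Y$ carries a flat K\"ahler metric and admits a finite \'etale cover $T\to Y$ by a complex torus $T$, and, when $X$ is projective, all fibers of $\phi$ are mutually isomorphic, so $\phi$ is holomorphically locally trivial. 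A general fiber $F$ is smooth (submersion) and rationally connected (definition of an MRC fibration), hence projective since a rationally connected compact K\"ahler manifold is projective. Base-changing $\phi$ along $T\to Y$ yields the asserted commutative square with a locally trivial morphism $X^{*}=X\times_Y T\to T$ and rationally connected fiber $F$, which proves part $(1)$.

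For part $(2)$, write $m=\dim_{\mathbb{C}}Y$. Since $Y$ is finite \'etale covered by the complex torus $T$, its universal cover is $Y_{\rm univ}\cong\mathbb{C}^m$, and $\pi_1(Y)$ is an extension of a finite group by $\pi_1(T)=\mathbb{Z}^{2m}$. Because $F$ is smooth projective rationally connected, it is simply connected by \cite{Cam92, KoMM92}; the homotopy exact sequence $\pi_1(F)\to\pi_1(X)\to\pi_1(Y)\to 1$ of the fiber bundle $F\hookrightarrow X\to Y$ then forces $\phi_{*}\colon\pi_1(X)\to\pi_1(Y)$ to be an isomorphism, so $\pi_1(X)$ is an extension of a finite group by $\mathbb{Z}^{2m}$, and the universal cover of $X$ is the pulled-back bundle $X_{\rm univ}\cong X\times_Y Y_{\rm univ}=X\times_Y\mathbb{C}^m$ (it is a covering of $X$, and a fiber bundle over the simply connected $\mathbb{C}^m$ with simply connected fiber $F$, hence simply connected). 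This bundle has fiber $F$ and structure group the complex Lie group $\operatorname{Aut}(F)$ (a complex Lie group since $F$ is compact K\"ahler); as $\mathbb{C}^m$ is Stein and contractible, Grauert's Oka principle makes it holomorphically trivial, so $X_{\rm univ}\cong\mathbb{C}^m\times F$, completing part $(2)$.

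The main obstacle is the hypothesis that the MRC fibration is a morphism: in general it is only a meromorphic map, and one cannot freely resolve its indeterminacy, since blowing up $X$ destroys semi-positivity of the holomorphic sectional curvature and hence the very hypothesis that makes Theorem~\ref{thm-mai} and the smoothness theorem available. A second, related difficulty is the non-algebraic K\"ahler case: Theorem~\ref{thm-mai} as stated requires $X$ projective, the holomorphic local triviality (``all fibers isomorphic'') also appears to rely on projectivity, and for non-projective $Y$ the structure of $Y$ as a complex torus up to finite \'etale cover needs a separate K\"ahler-geometric argument. So an unconditional proof must additionally show that semi-positive holomorphic sectional curvature lets one choose an MRC fibration without indeterminacy locus, and then run the K\"ahler analogue of the argument above when $Y$ is not projective.
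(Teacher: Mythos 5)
The statement you are addressing is stated as a \emph{conjecture} in the paper, and the paper itself does not prove it unconditionally; it establishes it in two special cases (Corollary \ref{cor-main}: surfaces, and the MRC-is-a-morphism case for part $(1)$; Corollary \ref{cor-CH}: projective $X$ with nef $-K_X$, invoking Cao--H\"oring). Your proposal is likewise conditional, and you correctly identify the two genuine obstructions (resolving the indeterminacy of the MRC fibration destroys semi-positivity of $H_g$; the non-projective K\"ahler case needs separate treatment). These match the paper's own open Problems \ref{prob-main} and \ref{prob-mainn}, so your framing is accurate.

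For part $(1)$ your argument is essentially identical to the paper's proof of the second bullet of Corollary \ref{cor-main}: observe that the MRC image $Y$ is not uniruled by Graber--Harris--Starr, so $K_Y$ is pseudo-effective by BDPP, and then invoke Theorem \ref{thm-main} for smoothness, flatness of $Y$, the finite \'etale cover by a torus, and (when $X$ is projective) local triviality; the fiber product statement follows by base change. For part $(2)$, however, your route genuinely differs from the paper's and fills a gap the paper leaves open. The paper obtains the decomposition $X_{\rm univ}\cong\mathbb{C}^m\times F$ only via integrability of the foliation $\phi^*T_Y\subset T_X$ (Remark \ref{rem-main}, automatic only in the rank-one case used for surfaces) or by importing Cao--H\"oring's structure theorem wholesale (Corollary \ref{cor-CH}); Corollary \ref{cor-main} claims only $(1)$ in the MRC-is-a-morphism case. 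You instead pull back the holomorphic fiber bundle from Theorem \ref{thm-main}$(4)$ along $Y_{\rm univ}\cong\mathbb{C}^m\to Y$, use simple connectivity of the rationally connected fiber $F$ and the homotopy exact sequence to identify this pullback with $X_{\rm univ}$, and then invoke Grauert's Oka principle over the contractible Stein base $\mathbb{C}^m$ to trivialize the associated $\operatorname{Aut}(F)$-bundle (with $\operatorname{Aut}(F)$ a complex Lie group by Bochner--Montgomery). This is correct and is actually stronger than what Corollary \ref{cor-main} records: it shows that, once the MRC fibration is an honest morphism with $X$ projective, part $(2)$ of the conjecture also follows from Theorem \ref{thm-main}, without needing integrability of the splitting. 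The price of the Oka-principle route is that it requires a genuine holomorphic fiber bundle structure (supplied here by Theorem \ref{thm-main}$(4)$, hence the projectivity hypothesis) rather than merely a submersion with split tangent bundle, whereas the Ehresmann argument in Remark \ref{rem-main} would give the splitting of the universal cover without projectivity if one knew the foliation were integrable.
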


When we approach to the above conjecture in the case of $X$ being projective, 
it seems to be the right direction to study 
a MRC fibration 
$\phi: X \dashrightarrow Y$ of $X$, 
based on the strategy explained in \cite{Mat18}. 
We remark that MRC fibrations are almost holomorphic maps  
(that is, dominant rational maps with compact general fibers) and
they are uniquely determined up to birational equivalence. 
It can be seen that we can always choose a MRC fibration $\phi: X \dashrightarrow Y$ 
such that the image $Y$ is smooth by taking a resolution of singularities and 
that the image $Y$ has the pseudo-effective canonical bundle by \cite[Theorem 1.1]{GHS03} and  \cite{BDPP}.

The following theorem, which is one of the main results of this paper, 
reveals a detailed structure of morphisms whose domain has semi-positive holomorphic sectional curvature. 
Theorem \ref{thm-main} is formulated for MRC fibrations of projective varieties and  
Albanese maps of K\"ahler manifolds. 
By applying Theorem \ref{thm-main} to MRC fibrations, 
we can affirmatively solve Conjecture \ref{conj-str} for compact K\"ahler surfaces 
and  (1) of Conjecture \ref{conj-str} 
in the case where a MRC fibration can be chosen to be a morphism
(see Corollary \ref{cor-main}). 
Further, by applying Theorem \ref{thm-main} to Albanese maps, 
we can obtain a vanishing theorem for the global holomorphic $1$-forms
(see Corollary \ref{cor-mainnnn}).
This vanishing theorem is an extension of \cite[Theorem 1.7]{Yan17a}.

\begin{theo}\label{thm-main}
Let $(X, g)$ be a compact K\"ahler manifold 
with semi-positive holomorphic sectional curvature  and  
let $Y$ be a compact K\"ahler manifold with pseudo-effective canonical bundle. 
Further let $\phi: X \to Y$ be a morphism from $X$ to $Y$. 
Then the following statements hold\vspace{0.1cm}\,$:$
\begin{itemize}
\item[(1)] $\phi$ is a smooth morphism $($that is, a submersion$)$. \vspace{0.2cm}
\item[(2)] The standard exact sequence of vector bundles obtained from $(1)$
$$
0 \xrightarrow{\quad \quad} T_{X/Y}:=\Ker d\phi_* \xrightarrow{\quad \quad}  
T_X \xrightarrow{\quad d\phi_* \quad}
\phi^{*} T_Y
\xrightarrow{\quad \quad} 0
$$ 
splits. 
Moreover its holomorphic splitting
$$
T_X = T_{X/Y} \oplus \phi^{*} T_Y
$$
coincides with the orthogonal decomposition of $T_{X}$ with respect to $g$. 
Here $T_X$ $($resp. $T_Y$$)$ is the $($holomorphic$)$ tangent bundle of $X$ 
$($resp. $Y$$)$.
\vspace{0.2cm}
\item[(3)] Let $g_Q$ be the hermitian metric on $\phi^{*} T_Y$ induced by 
the above exact sequence and the given metric $g$. 
Then there exists a hermitian metric  $g_Y$ on $T_Y$ with the following properties\,$:$ 
\vspace{0.1cm}
\begin{itemize}
\item[$\bullet$] $g_Q$ is obtained from the pull-back of $g_Y$ $($namely, $g_Q=\phi^* g_Y$$)$. 
\vspace{0.1cm}
\item[$\bullet$] The holomorphic sectional curvature of $(Y, g_Y)$ is identically zero. 
In particular, the image $Y$ is  flat and it admits a finite \'etale cover $T \to Y$
by a complex torus $T$. 
\end{itemize}
\vspace{0.2cm}
\item[(4)] $\phi$ is locally trivial if we further assume that $X$ is projective. 
\end{itemize}
\end{theo}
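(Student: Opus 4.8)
The plan is to prove the four assertions in Theorem \ref{thm-main} in sequence, each building on the previous one, with the curvature geometry of holomorphic sectional curvature and the positivity of $K_Y$ doing the essential work.

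\textbf{Step 1 (smoothness of $\phi$).} First I would show $\phi$ is a submersion. The obstruction to this is the existence of a point $p\in X$ where $d\phi_p$ drops rank; I want to rule this out using that $K_Y$ is pseudo-effective while $T_X$ has semi-positive holomorphic sectional curvature. The key computation is that a holomorphic map from a manifold with semi-positive holomorphic sectional curvature to a manifold with pseudo-effective canonical bundle cannot fail to be a submersion, because semi-positive holomorphic sectional curvature forces a strong positivity on the ``horizontal'' directions of $\phi$ and hence would force $-K_Y$ to be sufficiently positive along $\phi(X)$, contradicting pseudo-effectivity of $K_Y$ unless $d\phi$ has everywhere maximal rank. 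Concretely, I expect to combine a Royden-type argument (the holomorphic sectional curvature controls curvature in complex $2$-planes spanned by a horizontal vector and its image) with the fact that $\phi^*K_Y$ would then be anti-pseudo-effective, and a pseudo-effective line bundle that is also anti-pseudo-effective is torsion, giving a contradiction with the strict positivity at a generic point. This is the step I expect to be the main obstacle, since it requires turning a pointwise sectional-curvature bound into a global statement about $K_Y$; the truly flat tangent vectors and the interplay with $\det$ of the bundles will need care.

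\textbf{Step 2 (splitting and orthogonality).} Granting $\phi$ smooth, I have the exact sequence $0\to T_{X/Y}\to T_X\to\phi^*T_Y\to 0$. Let me equip $T_X$ with $g$, which induces a $C^\infty$ orthogonal splitting $T_X=T_{X/Y}\oplus Q$ with $Q\cong\phi^*T_Y$ as $C^\infty$ bundles. The claim is that this $C^\infty$ splitting is holomorphic. The standard tool is the second fundamental form: $Q$ is a holomorphic subbundle iff the second fundamental form of $T_{X/Y}\subset T_X$ vanishes, and by a Griffiths-type curvature-decreasing/increasing argument the semi-positivity of the holomorphic sectional curvature of $(X,g)$ forces the relevant second fundamental form to vanish. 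Here I would use that the holomorphic sectional curvature of a subbundle is dominated by that of the ambient bundle plus $|{\rm II}|^2$-type terms, combined with the pseudo-effectivity of $K_Y$ (equivalently, $\det\phi^*T_Y$ not too positive) to squeeze ${\rm II}\equiv0$. Once the splitting is holomorphic, it automatically coincides with the orthogonal decomposition because the orthogonal complement of a holomorphic subbundle with respect to a Hermitian metric, when it happens to be holomorphic, is the unique holomorphic complement compatible with $g$.

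\textbf{Step 3 (flat metric on $Y$ and torus quotient).} With $T_X=T_{X/Y}\oplus\phi^*T_Y$ holomorphic and orthogonal, the induced metric $g_Q$ on $\phi^*T_Y$ has, by the Gauss–Codazzi relations for the now-trivial second fundamental form, holomorphic sectional curvature bounded above by that of $(X,g)$ restricted to horizontal planes, hence $\geq 0$; but along fibers $\phi^*T_Y$ is flat by construction, so $g_Q$ descends, at least infinitesimally, and I want to produce $g_Y$ on $T_Y$ with $g_Q=\phi^*g_Y$ — this uses that the curvature of $g_Q$ is horizontal, i.e. annihilated by fiber directions, which follows from the splitting being $\phi$-compatible. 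Then the holomorphic sectional curvature of $(Y,g_Y)$ is both $\geq0$ (pulled back from $X$) and forces $c_1(K_Y)$ to be represented by a non-positive form; since $K_Y$ is pseudo-effective this makes the Ricci-type curvature zero, and a standard argument (e.g. via the resolution of the Yau–Zheng / flatness problem for nonnegative holomorphic sectional curvature together with $K_Y$ pseudo-effective) yields that the holomorphic sectional curvature of $g_Y$ is identically zero, hence $Y$ is flat and, by Bieberbach, admits a finite \'etale torus cover $T\to Y$.

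\textbf{Step 4 (local triviality in the projective case).} Finally, assuming $X$ projective, I upgrade ``smooth'' to ``locally trivial''. Pulling back along $T\to Y$, I get a smooth morphism $X^*=X\times_Y T\to T$ with $T$ a torus; by Step 2 the relative tangent bundle splits off, so the family is infinitesimally rigid in a suitable sense, and the fibers — being rationally connected, or at least having the positivity forced by Step 1–3 — have unobstructed deformation behavior controlled by $H^1(F,T_F)$. The plan is to invoke an Ehresmann-type / Fischer–Grauert argument: a smooth proper morphism whose fibers are mutually isomorphic is locally trivial, and to show the fibers are mutually isomorphic I use that the splitting $T_{X^*}=T_{X^*/T}\oplus\pi^*T_T$ makes the Kodaira–Spencer map vanish (the horizontal lift given by the splitting integrates to give local trivializations since $\pi^*T_T$ is flat). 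Projectivity enters to guarantee that the fibers are Moishezon/projective so that this rigidity and the descent of the local trivialization from $T$ to $Y$ go through. The expected subtlety here is checking that the local product structure over $T$ is equivariant for the deck group of $T\to Y$ so that it descends to a local triviality of $X\to Y$ itself.
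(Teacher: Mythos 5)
Your overall skeleton (smooth $\Rightarrow$ split $\Rightarrow$ flat $g_Y$ $\Rightarrow$ Ehresmann) matches the paper, but Step 1 — which you correctly flag as the crux — has a genuine gap, and the mechanism you propose would not close it. You write that semi-positive holomorphic sectional curvature ``forces a strong positivity on the horizontal directions of $\phi$'' and hence makes $-K_Y$ too positive. This is backwards: the curvature is only assumed \emph{semi}-positive, there is no ``strict positivity at a generic point'' to contradict, and in fact the conclusion of the analysis is that all horizontal directions are \emph{truly flat} (zero curvature in every pairing), not positive. Nothing pointwise forces $\phi^{*}K_Y^{\vee}$ to be pseudo-effective: the induced curvature on $\Lambda^m T_X$ in horizontal directions is a trace of terms $R_g(v,\bar v, e_i, \bar e_i)$, and you control only the diagonal holomorphic sectional terms, while the vertical directions contribute with no a priori sign. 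The paper handles this by constructing a (possibly singular) hermitian metric $G$ on $\phi^{*}K_Y^{\vee}$ from the sheaf map $\phi^{*}K_Y \hookrightarrow \Lambda^m\Omega_X$, passing to a log resolution to write $\pi^{*}\sqrt{-1}\Theta_G=\gamma+[E]$, and then integrating the scalar curvature $\sqrt{-1}\Theta_G\wedge\omega^{n-1}$ over $X$. The horizontal part of this integral is controlled pointwise via a carefully constructed orthonormal frame of iterated minimizers of $H_g$ (Lemma~\ref{lem-ineq}), while the \emph{vertical} part is shown non-negative only after a Fubini decomposition into fiber integrals followed by Stokes; the paper explicitly remarks that the vertical integrand has no evident pointwise sign. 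Since $K_Y$ is pseudo-effective, the total integral is $\le 0$, forcing $\gamma=0$, $E=0$, hence $G$ smooth and $\phi$ a submersion. None of this global integration machinery appears in your proposal, and a ``Royden-type'' pointwise argument cannot replace it.

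This gap propagates: your Step 2 proposes to squeeze the second fundamental form to zero using ``$K_Y$ not too positive,'' but the actual vanishing of $B$ comes from the equality $\sqrt{-1}\Theta_{\det g_Q}=\sqrt{-1}\Theta_G\equiv 0$ established in Step 1, combined with the trace identity $\sum_i\langle B_{\bar v}(e_i),B_{\bar v}(e_i)\rangle_{g_S}=0$ and non-negativity of each summand — it is not a Griffiths-monotonicity squeeze on its own. Likewise in your Step 3, the descent $g_Q=\phi^{*}g_Y$ does not follow from ``the curvature of $g_Q$ being horizontal''; it requires showing that $\log|\phi^{*}v|^2_{g_Q}$ is plurisubharmonic along compact fibers (via the truly-flatness of horizontal vectors and the sub-line-bundle curvature inequality) and invoking the maximum principle. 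Step 4 is essentially fine in spirit — Ehresmann plus H\"oring's lemma — though you over-engineer it; the paper's appeal to \cite[Lemma 3.19]{Hor07} for the projective case is all that is needed. In short: the plan identifies the right targets but lacks the central technical device (the singular metric $G$, its resolution, and the integrated scalar-curvature argument with Fubini and Stokes), without which Step 1 and hence the whole chain does not go through.
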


\begin{cor}\label{cor-main}
Let $X$ be a compact K\"ahler manifold with semi-positive holomorphic sectional curvature. 
Then the followings hold\,$:$
\vspace{0.2cm}\\
\quad $\bullet$ All the statements of Conjecture \ref{conj-str} hold  
in the case of $X$ being a surface. 
\vspace{0.1cm}\\
\quad $\bullet$ The  statement $(1)$ of Conjecture \ref{conj-str} holds 
if $X$ is projective and a MRC fibration of $X$ can be chosen to be a morphism. 
\end{cor}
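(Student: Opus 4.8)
The plan is to derive both assertions directly from Theorem \ref{thm-main}. For the second bullet, suppose $X$ is projective and a MRC fibration of $X$ can be chosen to be a morphism $\phi\colon X\to Y$ with $Y$ smooth projective. The target of a MRC fibration is not uniruled, so $K_Y$ is pseudo-effective by \cite{GHS03} and \cite{BDPP}, and Theorem \ref{thm-main} applies to $\phi$: item $(1)$ makes $\phi$ smooth, item $(4)$ makes it locally trivial, and item $(3)$ provides a flat metric $g_Y$ on $T_Y$ together with a finite \'etale cover $T\to Y$ by a complex torus. The fibers $F$ are then all isomorphic to the rationally connected general fiber of the MRC fibration, hence rationally connected, and they are projective as fibers of a projective morphism from a projective variety, while $Y$ is a compact K\"ahler manifold carrying a flat metric; this is precisely statement $(1)$ of Conjecture \ref{conj-str}. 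For the "in particular" part, $X^{*}:=X\times_Y T\to T$ is the base change of the locally trivial morphism $X\to Y$ along $T\to Y$, hence locally trivial with the same fiber $F$, and the commutative square is the one defining the fiber product.

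For the first bullet, let $X$ be a compact K\"ahler surface with semi-positive holomorphic sectional curvature and let $X\dashrightarrow Y$ be a MRC fibration, so that $\dim Y\in\{0,1,2\}$. If $\dim Y=2$, then $X$ is not uniruled, hence $K_X$ is pseudo-effective by the surface case of \cite{BDPP} (equivalently by the classification of compact K\"ahler surfaces); applying Theorem \ref{thm-main} to $\mathrm{id}\colon X\to X$ then shows that the holomorphic sectional curvature of $(X,g)$ is identically zero, so $X$ is flat and finitely \'etale covered by a $2$-dimensional complex torus, and Conjecture \ref{conj-str} holds with $Y=X$, with $F$ a point, with $m=2$, and with $\pi_1(X)$ an extension of a finite group by $\mathbb{Z}^{4}$. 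If $\dim Y=0$, then $X$ is rationally connected, hence a rational surface, which is projective and simply connected, and Conjecture \ref{conj-str} holds trivially with $F=X$, $X_{\mathrm{univ}}=X$, and $m=0$. It remains to treat $\dim Y=1$: then $X$ is uniruled but not rationally connected, and since $Y$ is not uniruled it is a smooth curve of genus at least $1$, so $K_Y$ is nef.

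In this remaining case I would first check that the MRC fibration $X\dashrightarrow Y$ is in fact a morphism. Choose a modification $\mu\colon\widetilde{X}\to X$, an isomorphism over the complement of the indeterminacy locus $Z$ of $X\dashrightarrow Y$, resolving it into a morphism $\psi\colon\widetilde{X}\to Y$. If some $\mu$-exceptional curve $E$ were not contracted by $\psi$, then $E$ would dominate the curve $Y$ and hence meet every fiber of $\psi$; in particular it would meet the strict transform of a general fiber of $X\dashrightarrow Y$, forcing that general fiber to contain $\mu(E)\in Z$ and contradicting that a MRC fibration is almost holomorphic. Hence $\psi$ contracts every $\mu$-exceptional curve and factors through $\mu$, yielding a morphism $\phi\colon X\to Y$. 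By Theorem \ref{thm-main}(1) this $\phi$ is a smooth morphism whose fibers are smooth connected rationally connected curves, i.e.\ copies of $\mathbb{P}^{1}$. Since $-K_{X/Y}$ restricts to $\mathcal{O}_{\mathbb{P}^{1}}(2)$ on every fiber it is $\phi$-ample, so $-K_{X/Y}+\phi^{*}A$ is ample on $X$ for an ample $A$ on the projective curve $Y$; thus $X$ is projective, Theorem \ref{thm-main}(4) gives local triviality, and Theorem \ref{thm-main}(3) forces $Y$ to be an elliptic curve. Pulling back along the universal cover $\mathbb{C}\to Y$ yields a holomorphic $\mathbb{P}^{1}$-bundle over $\mathbb{C}$, which is trivial by the Oka--Grauert principle; therefore $X\times_Y\mathbb{C}\cong\mathbb{C}\times\mathbb{P}^{1}$ is simply connected and covers $X$ with deck group $\pi_1(Y)\cong\mathbb{Z}^{2}$, so $X_{\mathrm{univ}}\cong\mathbb{C}\times\mathbb{P}^{1}$ and $\pi_1(X)\cong\mathbb{Z}^{2}$. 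This completes Conjecture \ref{conj-str} for surfaces.

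The heavy lifting is done by Theorem \ref{thm-main}; the points that still require care are the surface-theoretic ones---that an almost holomorphic map to a curve has no indeterminacy, that the resulting smooth $\mathbb{P}^{1}$-fibration lives on a projective surface so that Theorem \ref{thm-main}(4) applies, and that the universal cover in the uniruled case is computed via the Oka principle. I also expect to need some care with the pseudo-effectivity of $K_X$ for a non-uniruled compact K\"ahler surface that is not projective, where the K\"ahler version of \cite{BDPP} (or the Enriques--Kodaira classification of surfaces) is invoked rather than the projective statement.
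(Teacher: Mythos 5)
Your proposal is correct, and the second bullet is proved exactly as in the paper (pseudo-effectivity of $K_Y$ via \cite{GHS03} and \cite{BDPP}, then all four parts of Theorem \ref{thm-main}). For the surface case, however, you take a genuinely different route at several points. The paper first shows that quasi-positive $H_g$ forces $\int_X c_1(K_X)\wedge\omega < 0$ via the scalar-curvature integral, ruling out $K_X$ pseudo-effective, and then reads off from the Enriques--Kodaira classification that $X$ carries a ruling morphism $X\to B$ to a curve of genus $\geq 1$; finally it invokes Remark \ref{rem-main} (integrability of the rank-one direct summand $\phi^{*}T_B$ plus the Ehresmann theorem) to split the universal cover. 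You instead do a case split on $\dim Y$: you handle $K_X$ pseudo-effective by applying Theorem \ref{thm-main} directly to $\mathrm{id}\colon X\to X$ and reading off $H_g\equiv 0$ from part (3); you deduce that a MRC fibration to a curve is already a morphism from a first-principles exceptional-curve argument using almost-holomorphicity (whereas the paper simply has the ruling morphism from the classification); you establish projectivity of $X$ via $\phi$-ampleness of $-K_{X/Y}$; and you compute $X_{\mathrm{univ}}\cong\mathbb{C}\times\mathbb{P}^1$ by Oka--Grauert rather than Ehresmann. Both methods work. What your route buys is independence from the finer classification statement (you only need that a non-uniruled Kähler surface has $K_X$ pseudo-effective, which you correctly flag); what the paper's route buys is brevity, since Remark \ref{rem-main} packages the universal-cover decomposition in one stroke and the classification supplies the morphism for free.

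One small point worth tightening in your exceptional-curve argument: from ``$E$ dominates $Y$, hence meets every fiber of $\psi$'' to ``the general fiber of $X\dashrightarrow Y$ contains $\mu(E)\in Z$'' you should explicitly invoke connectedness of the fibers of $\psi$ (which holds because the MRC fibration has connected general fibers and $\psi$ is its resolution); without that, $E$ meeting $\psi^{-1}(y)$ does not by itself force $E$ to meet the strict transform of the general fiber. Once connectedness is noted, the chain from the strict transform to $E$ lies inside $\psi^{-1}(y)$ and consists, apart from the strict transform, of $\mu$-exceptional curves, so the general fiber downstairs still meets $Z$, giving the contradiction.
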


\begin{cor}\label{cor-mainnnn}
Let $(X, g)$ be a compact K\"ahler manifold 
with semi-positive holomorphic sectional curvature. 
Then we have  
$$
h^{0}(X, \Omega_X)\leq \dim X - n_{{\rm{tf}}}{(X, g)}. 
$$ 
In particular, we obtain $h^{0}(X, \Omega_X)=0$ 
if $n_{{\rm{tf}}}{(X, g)}=\dim X$ $($which is satisfied if the holomorphic sectional curvature is quasi-positive$)$. 
\end{cor}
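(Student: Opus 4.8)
The plan is to apply Theorem \ref{thm-main} to the Albanese morphism $\alpha \colon X \to \Alb(X)$. Recall first that for the compact K\"ahler manifold $X$ one has $h^{0}(X,\Omega_X)=\dim \Alb(X)$, since the dimension of the Albanese torus equals the irregularity of $X$; hence it suffices to prove $\dim \Alb(X)\leq \dim X-n_{{\rm{tf}}}(X,g)$, that is, that $\dim V_{{\rm{flat}},p}\geq \dim \Alb(X)$ for every point $p\in X$. Now $\Alb(X)$ is a compact K\"ahler manifold whose canonical bundle is trivial, in particular pseudo-effective, so Theorem \ref{thm-main} applies with $Y=\Alb(X)$ and $\phi=\alpha$. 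Since $X$ is not assumed to be projective, only parts $(1)$--$(3)$ will be used.

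By part $(1)$ the morphism $\alpha$ is a submersion, so part $(2)$ provides an orthogonal holomorphic splitting $T_X=T_{X/\Alb(X)}\oplus \alpha^{*}T_{\Alb(X)}$, and by part $(3)$ the induced metric $g_Q$ on $\alpha^{*}T_{\Alb(X)}$ equals the pull-back $\alpha^{*}g_Y$ of a flat metric $g_Y$ on $\Alb(X)$; in particular the Chern curvature of $(\alpha^{*}T_{\Alb(X)},g_Q)$ vanishes identically. The next step is to verify that every horizontal vector $v\in (\alpha^{*}T_{\Alb(X)})_p$ is truly flat. Because the decomposition is orthogonal and holomorphic, the Chern connection of $(T_X,g)$ is the direct sum of the Chern connections of the two summands, so $\Cur{T_X}{\xi}$ preserves each summand for every $\xi\in T_{X,p}$ and restricts on $\alpha^{*}T_{\Alb(X)}$ to the Chern curvature of $(\alpha^{*}T_{\Alb(X)},g_Q)$; hence $\Cur{T_X}{\xi}\,v=0$ for all $\xi\in T_{X,p}$. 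Consequently $\Rur{g}{v}{\xi}=\Rur{g}{\xi}{v}=\langle \Cur{T_X}{\xi}\,v,v\rangle_g=0$ for every $\xi\in T_{X,p}$, where we have used the K\"ahler symmetry $\Rur{g}{v}{\xi}=\Rur{g}{\xi}{v}$; thus $v$ is truly flat.

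It follows that $(\alpha^{*}T_{\Alb(X)})_p\subseteq V_{{\rm{flat}},p}$ for every $p$, so $\dim V_{{\rm{flat}},p}\geq \dim (\alpha^{*}T_{\Alb(X)})_p=\dim \Alb(X)$, and by the definition of $n_{{\rm{tf}}}(X,g)$ we conclude $h^{0}(X,\Omega_X)=\dim \Alb(X)\leq \dim X-n_{{\rm{tf}}}(X,g)$. For the last assertion, if $n_{{\rm{tf}}}(X,g)=\dim X$ then $h^{0}(X,\Omega_X)\leq 0$, hence $h^{0}(X,\Omega_X)=0$; and the quasi-positivity of the holomorphic sectional curvature forces $n_{{\rm{tf}}}(X,g)=\dim X$, as already noted, since at a point where the holomorphic sectional curvature is positive there is no nonzero truly flat tangent vector.

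The step I expect to require the most care is the passage ``horizontal $\Rightarrow$ truly flat'': the delicate part is controlling $\Rur{g}{v}{\xi}$ also for \emph{vertical} $\xi\in T_{X/\Alb(X),p}$, which uses both the block-diagonality coming from the orthogonal holomorphic splitting of Theorem \ref{thm-main}$(2)$ and the flatness of $g_Q$ from part $(3)$, together with the symmetries of the K\"ahler curvature tensor. This is, however, exactly the mechanism already employed in the proof of Theorem \ref{thm-mai} to handle the horizontal directions of an MRC fibration, so here it amounts to invoking that argument.
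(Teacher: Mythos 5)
Your high-level strategy is the same as the paper's: apply the results of Section~\ref{Sec-3} to the Albanese map $\alpha:X\to\Alb(X)$, using that $K_{\Alb(X)}$ is trivial, hence pseudo-effective, and then use $h^{0}(X,\Omega_X)=\dim\Alb(X)$. The paper, however, simply cites Theorem~\ref{thm-sum}, whose conclusion $\dim X-\dim Y\geq n_{\rm tf}(X,g)$ is exactly the inequality needed, and whose proof relies on the explicit statement in Theorem~\ref{thm-mainn}$(2)$ that all tangent vectors in $\phi^{*}T_{Y_1}\subset T_{X_1}$ are truly flat.

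Your attempt to re-derive that truly-flatness from parts $(2)$ and $(3)$ of Theorem~\ref{thm-main} has a gap at the step ``in particular the Chern curvature of $(\alpha^{*}T_{\Alb(X)},g_Q)$ vanishes identically.'' Theorem~\ref{thm-main}$(3)$ only asserts that the \emph{holomorphic sectional curvature} of $g_Y$ is identically zero, not that the full Chern curvature tensor $R_{g_Y}$ vanishes. For a K\"ahler metric the implication $H_{g_Y}\equiv 0\Rightarrow R_{g_Y}\equiv 0$ holds by polarization, but $g_Y$ is only produced as a Hermitian metric on $T_Y$ in the proof of Claim~\ref{claim-truly}, and no K\"ahler condition on $g_Y$ is stated or established; the phrase ``the image $Y$ is flat'' refers to the fact that $Y$ admits a flat K\"ahler metric (via a torus cover), not to the specific metric $g_Y$. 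Without $\sqrt{-1}\Theta_{g_Q}\equiv 0$, you cannot conclude $\Cur{T_X}{\xi}\,v=0$ from the block-diagonality alone. In fact the paper's own derivation of truly-flatness in Claim~\ref{claim-truly} is the reverse of what you propose: it first shows $\langle B_{\bar v}(e_i),B_{\bar v}(e_i)\rangle_{g_S}=0$ and $\Rur{g}{v}{e_i}=0$ using the vanishing of the curvature of the \emph{determinant} metric $\det g_Q=G$ (Claim~\ref{claim-flat}) together with the quotient curvature inequality and the non-negativity from Lemma~\ref{lem-ineq}, then invokes Lemma~\ref{lemm-flat}; only afterwards could one deduce Chern-flatness of $g_Q$ itself. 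Using that flatness as an input, as you do, is therefore circular in spirit and unjustified by the stated theorem. The clean fix is to cite Theorem~\ref{thm-sum} (or Theorem~\ref{thm-mainn}$(2)$) directly, as the paper does; the rest of your argument, including the lower semi-continuity of $p\mapsto n_{\rm tf}(X,g)_p$ and the final deduction, is fine.

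Also note a stylistic imprecision in your last step: from $\Cur{T_X}{\xi}\,v=0$ for all $\xi$ you conclude only $\Rur{g}{v}{\xi}=0$ and then assert ``thus $v$ is truly flat,'' which needs either Lemma~\ref{lemm-flat} or a polarization in both entries of the endomorphism-valued $(1,1)$-form $\sqrt{-1}\Theta_g$; neither is spelled out.
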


For the proof of Theorem \ref{thm-main}, 
we will carefully observe the curvature current and its integration of 
an induced \lq \lq singular" hermitian metric on $\phi^* K_Y$, 
and further we investigate the scalar curvature of the K\"ahler form $g$, 
based on the idea in \cite{HW15}. 
The main difficulty of Theorem \ref{thm-main} is that the given metric $g$ has no a priori relation with the morphism $\phi$. 
To overcome this difficulty, 
we will show 
that all the tangent vector in the horizontal direction of $\phi:X\to Y$ are truly flat, 
which produces a relation  (for example, the statements (2) and (3)) between the metric $g$ and the morphism $\phi$. 
The key point here is 
to construct a suitably chosen orthonormal basis of $T_X$ 
by using an argument on a partial positivity developed in \cite{Mat18}.

By modifying the above techniques for a general MRC fibration $\phi: X \dashrightarrow Y$ 
(which is not necessarily a morphism), 
we can prove that the numerical dimension of the image $Y$ is equal to zero. 
Moreover we can obtain the same conclusions as in (1), (2), (3) of Theorem \ref{thm-main} 
over the smooth locus of $\phi$. 

\begin{theo}\label{thm-mainn}
Let $(X, g)$ be a compact K\"ahler manifold 
with semi-positive holomorphic sectional curvature and  
$Y$ be a compact K\"ahler manifold with pseudo-effective canonical bundle. 
Let $\phi: X \dashrightarrow Y$ be an almost holomorphic map from $X$ to $Y$.

Then the numerical dimension $\nu(Y)$ of $Y$ is equal to zero. 
Moreover, the same conclusions as in $(1)$, $(2)$, $(3)$ of Theorem \ref{thm-main} hold 
if we replace $X$ and $Y$ in the statements of Theorem \ref{thm-main} 
with $X_1$ and $Y_1$ $($see Theorem \ref{r-thm-mainn} for the precise statement$)$. 
Here $X_1$ and $Y_1$ are Zariski open sets such that 
$\phi :X_1:=\phi^{-1}(Y_{1}) \to Y_1$ is a morphism. 
\end{theo}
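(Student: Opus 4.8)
The plan is to restrict attention to the Zariski open locus where $\phi$ is already a morphism, transplant there the proof of Theorem~\ref{thm-main}, and carry out the one global Chern--Weil argument on a resolution of the indeterminacy of $\phi$. First I would choose a resolution $\pi\colon \widehat{X}\to X$ of the indeterminacy of $\phi$, isomorphic over $X\setminus\mathrm{Ind}(\phi)$, so that $\widehat\phi:=\phi\circ\pi\colon \widehat X\to Y$ is a morphism; since $\phi$ is almost holomorphic, $\mathrm{Ind}(\phi)$ has codimension $\geq 2$ in $X$ and its image in $Y$ is a proper analytic subset, so one may take $Y_{1}$ disjoint from it. Then $\pi$ is an isomorphism over $X_{1}=\phi^{-1}(Y_{1})$, and $\widehat\omega:=\pi^{*}\omega_{g}$, with $\omega_{g}$ the K\"ahler form of $g$, is a smooth semi-positive closed $(1,1)$-form on $\widehat X$ which coincides with $\omega_{g}$ --- and hence has semi-positive holomorphic sectional curvature --- on $\widehat X\setminus\mathrm{Exc}(\pi)\supseteq X_{1}$, and which satisfies $\pi_{*}\widehat\omega^{\,n-1}=\omega_{g}^{\,n-1}$ with $n=\dim X$.

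Next, fix a singular hermitian metric $h_{Y}$ on $K_{Y}$ with $\sqrt{-1}\Theta_{h_{Y}}\geq 0$, which exists since $K_{Y}$ is pseudo-effective. Its pull-back $\widehat\phi^{*}h_{Y}$ is a singular metric on $\widehat\phi^{*}K_{Y}$ over the \emph{compact} manifold $\widehat X$ with non-negative curvature current, so
\[
0\ \leq\ c_{1}(\widehat\phi^{*}K_{Y})\cdot\{\widehat\omega\}^{\,n-1}
\ =\ \int_{\widehat X}\sqrt{-1}\Theta_{\widehat\phi^{*}h_{Y}}\wedge\widehat\omega^{\,n-1},
\]
and, $\mathrm{Exc}(\pi)$ being Lebesgue-null and the singular part of the current non-negative, the right-hand side dominates the mass over $X_{1}$ of $\sqrt{-1}\Theta_{\phi^{*}h_{Y}}\wedge\omega_{g}^{\,n-1}$. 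On the other hand, over $X_{1}$ the morphism $\phi_{1}:=\phi|_{X_{1}}$ realizes $\phi^{*}K_{Y}$ (equivalently the relative canonical sheaf $K_{X_{1}/Y_{1}}$) inside an exterior power of $\Omega_{X_{1}}$; comparing $\phi^{*}h_{Y}$ with the metric induced there by $g$ and estimating the resulting Chern--Weil density from above by a \lq\lq partial scalar curvature\rq\rq\ term of $(X,g)$ is the technical core, and it reuses the construction of a $g$-orthonormal frame of $T_{X}$ adapted to the horizontal/vertical splitting of $\phi_{1}$ from \cite{Mat18}, together with the scalar-curvature computation of \cite{HW15} that makes this term non-positive under semi-positivity of the holomorphic sectional curvature. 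Combining the two sides forces the above integral to vanish, and the equality case --- exactly as in the proof of Theorem~\ref{thm-main} --- yields that every $g$-horizontal tangent vector of $\phi_{1}$, i.e.\ every vector orthogonal to $T_{X_{1}/Y_{1}}=\Ker d\phi_{1,*}$, is truly flat at every point of $X_{1}$.

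Granting this, the remainder of the proof of Theorem~\ref{thm-main} goes through verbatim over $X_{1}\to Y_{1}$: the rank of $d\phi_{1}$ cannot drop on $X_{1}$ --- a divisorial drop locus would contradict pseudo-effectivity combined with the flatness just established --- so $\phi_{1}$ is a submersion; the second fundamental form of $0\to T_{X_{1}/Y_{1}}\to T_{X_{1}}\to\phi_{1}^{*}T_{Y_{1}}\to 0$ vanishes by the true flatness, so this sequence splits holomorphically and the splitting is the $g$-orthogonal one; and the induced quotient metric descends along $\phi_{1}$ to a hermitian metric $g_{Y_{1}}$ on $T_{Y_{1}}$ with identically vanishing holomorphic sectional curvature. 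These are statements $(1)$, $(2)$, $(3)$ of Theorem~\ref{thm-main} with $X,Y$ replaced by $X_{1},Y_{1}$; in particular $K_{Y}$ carries the smooth flat metric $\det g_{Y_{1}}$ over the Zariski-dense open set $Y_{1}$.

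Finally, for $\nu(Y)=0$: the existence of a smooth flat metric on $K_{Y}$ over $Y_{1}$ forces the curvature current of the metric with minimal singularities on $K_{Y}$ to have vanishing absolutely continuous part over $Y_{1}$, hence, since $Y\setminus Y_{1}$ is a proper analytic (in particular Lebesgue-null) subset, almost everywhere on $Y$; by the description of the numerical dimension in terms of the non-pluripolar mass of such a current, this gives $\nu(Y)=0$, equivalently the positive part of $c_{1}(K_{Y})$ in the divisorial Zariski decomposition vanishes. I expect the passage through the boundary to be the genuinely new obstacle compared with Theorem~\ref{thm-main}: the form $\widehat\omega$ degenerates along $\mathrm{Exc}(\pi)$, the induced singular metrics must be controlled near $\mathrm{Ind}(\phi)$, and the flat metric produced over $Y_{1}$ may really degenerate along a divisorial part of $Y\setminus Y_{1}$, so every step must be arranged to be insensitive to those loci --- and the fact that one obtains $\nu(Y)=0$ rather than numerical triviality of $K_{Y}$ or flatness of $Y$ is precisely the part of Theorem~\ref{thm-main}'s conclusion that survives this degeneration.
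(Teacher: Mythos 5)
Your proposal has the right overall strategy --- transplant the proof of Theorem~\ref{thm-main} to the locus $X_1\to Y_1$ where $\phi$ is a morphism, and perform the single global intersection computation on a compactification --- but the globalization you choose is not the one the paper uses, and it leaves a genuine gap at exactly the point you flag as ``the genuinely new obstacle.'' You work on a resolution $\widehat X\to X$ of the indeterminacy of $\phi$ and pair against $\widehat\omega^{\,n-1}$ with $\widehat\omega:=\pi^*\omega_g$. But $\widehat\omega$ is only semi-positive and degenerates along the exceptional locus, and the singular metric on $\widehat\phi^*K_Y$ that you would need for the scalar-curvature estimate is the one induced by $\Lambda^m g$ through the injection into $\Lambda^m\Omega$; on $\widehat X$ this has no transparent relation to the curvature tensor of $(X,g)$ near $\mathrm{Exc}(\pi)$, so neither the orthonormal-frame construction of Claim~\ref{claim-semi} nor the Fubini--Stokes computation of Claim~\ref{claim-key} transplants, and you acknowledge rather than resolve this. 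The paper circumvents the issue with a different globalization: it takes a resolution $\tau:\Gamma\to X$ of the indeterminacy and pushes the sheaf injection $\bar\phi^*K_Y\hookrightarrow\Lambda^m\Omega_\Gamma$ down to $X$, obtaining a line bundle $L:=\tau_*\bar\phi^*K_Y$ on $X$ itself which is still pseudo-effective, agrees with $\phi^*K_Y$ over $X_1$, and injects into $\Lambda^m\Omega_X$. Steps~\ref{step1}--\ref{step4} of the proof of Theorem~\ref{thm-main} are then repeated verbatim with $\phi^*K_Y$ replaced by $L$, using the honest K\"ahler form $\omega$ on $X$, a separate log resolution $\pi:\bar X\to X$ of the degenerate ideal of the injection, and the decomposition $\pi^*\sqrt{-1}\Theta_G=\gamma+[E]$; nothing in that machinery ever meets a degenerate reference form.

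Two further points. First, the explicit singular metric $h_Y\geq 0$ you fix on $K_Y$ plays no role: the pairing of $c_1$ against a power of the reference form is metric-independent, and the only input from pseudo-effectivity of $K_Y$ is the sign of that intersection number, used cohomologically exactly as in the proof of Theorem~\ref{thm-main}. Second, your derivation of $\nu(Y)=0$ from a flat metric on $K_Y$ over $Y_1$ via the absolutely continuous part of a current with minimal singularities is plausible but not a proof as written: $Y\setminus Y_1$ may have divisorial components, and the flat metric $\det g_{Y_1}^{\vee}$ produced on $K_{Y_1}$ need not be comparable near them with a global positive metric on $K_Y$, so the step from ``flat over $Y_1$'' to ``vanishing non-pluripolar mass on $Y$'' needs an argument. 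The paper avoids this entirely: having shown that $L$ is numerically trivial, one reads off from $L=\tau_*\bar\phi^*K_Y$ that $\bar\phi^*K_Y$ is numerically equivalent to a $\tau$-exceptional divisor, and this forces $\nu(K_Y)=0$ directly.
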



In Section \ref{Sec-2}, we will recall some basic results on curvature and truly flat tangent vectors. 
In Section \ref{Sec-3}, we will prove all the theorems and corollaries. 
In Section \ref{Sec-4}, we will discuss open problems related to the geometry of semi-positive holomorphic sectional curvature. 

\subsection*{Acknowledgements}
The author wishes to thank Professor Xiaokui Yang 
for stimulating discussions on RC positivity and related open problems, 
and he also wishes to thank 
Professor Masaaki Murakami for helpful comments on the classifications of surfaces. 
He would like to thank the members of 
Morningside Center of Mathematics, Chinese Academy of Sciences 
for their hospitality during my stay. 
He is supported by the Grant-in-Aid 
for Young Scientists (A) $\sharp$17H04821 from JSPS.

\section{Preliminaries}\label{Sec-2}
For reader's convenience, we summarize some formulas and  properties of 
curvature tensors, holomorphic sectional curvature, and truly flat tangent vectors 
in this section.

\subsection{Curvature and exact sequences of vector bundles}\label{subsec-2-1}
In this subsection, we recall several formulas of curvature of induced hermitian metrics 
and properties of exact sequences of vector bundles.

Let $(E, g)$ be a (holomorphic) vector bundle on a complex manifold $X$ 
equipped with a (smooth) hermitian metric $g$. 
The Chern curvature of $(E, g)$  
$$
\sqrt{-1}\Theta_{g}:=\sqrt{-1}\Theta_{g}(E) 
\in C^{\infty}(X, \Lambda^{1,1}\otimes \End (E)), 
$$
defines the curvature tensor 
$$
R_g:=R_{(E,g)} \in 
C^{\infty}(X, \Lambda^{1,1}\otimes E^\vee \otimes \bar E^{\vee})
$$
to be   
$$
R_g(v, \bar w,e, \bar f):= 
\big \langle \sqrt{-1}\Theta_{g}(v, \bar w)(e), f \big \rangle_g
$$
for tangent vectors $v, w \in T_X$ and vectors $e, f \in E$. 
We denote the dual vector bundle of $E$ by the notation $E^\vee$ and 
the inner product with respect to $g$ by the notation $\langle \bullet, \bullet \rangle_g$ 
throughout this paper. 
The metric $g$ induces the hermitian metric $\Lambda^m g$ 
on the vector bundle $\Lambda^m E$ of the $m$-th exterior product. 
Then it follows that 
\begin{align}\label{eq-8}
&\big \langle \sqrt{-1}\Theta_{\Lambda^m g}(v, \bar v)
(e_{1}\wedge e_{2}\wedge \dots \wedge e_m), e_{1}\wedge e_{2}\wedge \dots \wedge e_m 
\big \rangle_{\Lambda^m g}\\ \notag
=&\sum_{k=1}^{m}\big \langle \sqrt{-1}\Theta_{g}(v, \bar v)(e_k), e_k \big \rangle_g 
\end{align}
for a tangent vector $v \in T_X$ and 
vectors $\{e_k\}_{k=1}^{m}$ in $E$ with $\langle e_{i}, e_{j} \rangle_g = \delta_{ij}$ 
since the curvature $\sqrt{-1}\Theta_{\Lambda^m g}$ associated to  $\Lambda^m g$ satisfies that 
$$
\sqrt{-1}\Theta_{\Lambda^m g}(v, \bar v)(e_{1}\wedge e_{2}\wedge \dots \wedge e_m)=
\sum_{k=1}^{m}  e_{1}\wedge \cdots  \wedge e_{k-1}\wedge \sqrt{-1}\Theta_{g}(v, \bar v)(e_{k}) \wedge e_{k+1} \wedge \cdots \wedge e_m. 
$$
In particular, the curvature $\sqrt{-1}\Theta_{\det g}$ of the determinant bundle 
$\det E:=\Lambda^{{\rm{rk}}E} E$ with the induced metric $\det g:=\Lambda^{{\rm{rk}}E} g$ satisfies that 
\begin{align*}
\sqrt{-1}\Theta_{\det g}(v, \bar v)
=&\big \langle \sqrt{-1}\Theta_{\det g}(v, \bar v)
(e_{1}\wedge e_{2}\wedge \dots \wedge e_{{{\rm{rk}}E}} ), e_{1}\wedge e_{2}\wedge \dots \wedge e_{{{\rm{rk}}E}} 
\big \rangle_{\det g}\\ \notag
=&\sum_{k=1}^{{\rm{rk}}E}\big \langle \sqrt{-1}\Theta_{g}(v, \bar v)(e_k), e_k \big \rangle_g 
\end{align*}
for an orthonormal basis $\{e_k\}_{k=1}^{{\rm{rk}}E}$ of $E$.

For a subbundle $S$ of $E$ and its quotient vector bundle $Q:=E/S$, 
we consider the hermitian metric $g_S$ (resp. $g_Q$) on $S$ (resp. $Q$) 
induced by the metric $g$ and the exact sequence 
$$
0 \xrightarrow{\quad \quad} (S, g_S) \xrightarrow{\quad \quad}  
(E, g) \xrightarrow{\quad  \quad}(Q, g_Q)
\xrightarrow{\quad \quad} 0. 
$$ 
The quotient bundle $Q$ is isomorphic to the orthogonal complement 
$S^{\bot}$ of $S$ in $(E, g)$ as $C^\infty$-vector bundles. 
By this isomorphism, 
the quotient bundle $Q$ can be identified with the $C^{\infty}$-vector bundle $S^{\bot}$ and 
the metric $g_Q$ can be regarded as the hermitian metric on $S^{\bot}$. 
Also, it can be proven that there exist smooth sections  (which are called fundamental forms) 
$$A\in C^{\infty}(X, \Lambda^{1,0}\otimes \Hom(S, S^{\bot})) 
\text { and }
B\in C^{\infty}(X, \Lambda^{0,1}\otimes \Hom(S^{\bot}, S))
$$ 
satisfying that 
\begin{align}
\big \langle \sqrt{-1}\Theta_{g}(v, \bar v)(e), e \big \rangle_g + 
\big \langle B_{\bar v}  (e), B_{\bar v}  (e) \big \rangle_{g_S} &= 
\big \langle \sqrt{-1}\Theta_{g_Q}(v, \bar v)(e), e \big \rangle_{g_Q}, \label{eq-1} \\
\big \langle \sqrt{-1}\Theta_{g}(v, \bar v)(f), f \big \rangle_g - 
\big \langle A_v (f), A_v (f) \big \rangle_{g_Q} &= 
\big \langle \sqrt{-1}\Theta_{g_S}(v, \bar v)(f), f \big \rangle_{g_S}, \notag\\
\big \langle A_v (f), e \big \rangle_{g_Q}+\big \langle f, B_{\bar v} (e) \big \rangle_{g_S} &=0 \notag 
\end{align}
for a tangent vector $v \in T_X$, a vector $e \in S^{\bot}$, and a vector $f \in S$. 
In particular, we have 
\begin{align}
\big \langle \sqrt{-1}\Theta_{g}(v, \bar v)(e), e \big \rangle_g  &\leq 
\big \langle \sqrt{-1}\Theta_{g_Q}(v, \bar v)(e), e \big \rangle_{g_Q}, \label{eq-1a} \\
\big \langle \sqrt{-1}\Theta_{g}(v, \bar v)(f), f \big \rangle_g  &\geq 
\big \langle \sqrt{-1}\Theta_{g_S}(v, \bar v)(f), f \big \rangle_{g_S}. \label{eq-2a}
\end{align}
Moreover it can be shown that the above exact sequence determines 
the holomorphic orthogonal decomposition  
$E=S \oplus Q$ (that is, $S^{\bot}$ is a holomorphic vector bundle and it is isomorphic to $Q$) 
if and only if $A$ (equivalently $B$) is  identically zero. 

In the rest of this subsection, 
we summarize the notion of singular hermitian metrics on a line bundle $L$ (see \cite{Dem} for more details). 
A hermitian metric $h$ on $L$ is said to be  
a {\textit{singular hermitian metric}}, if 
$\log  |e|_h$ is an $L^{1}_{\rm{loc}}$-function for any local frame $e$ of $L$. 
Then the curvature current $\sqrt{-1}\Theta_h$ of $(L,h)$ is defined by   
\begin{equation*}
\sqrt{-1} \Theta_{h}:=\sqrt{-1} \Theta_{h}(L) :=-\deldel \log  |e|^2_h 
\end{equation*}
in the sense of distributions. 
The singular hermitian metric $h$ is said to have
neat {\textit{analytic singularities}}, 
if there exists an ideal sheaf $\mathcal{I} \subset \mathcal{O}_{X}$ such that 
the function $ - \log  |e|^2_h$  can be locally written as 
\begin{equation*}
- \log  |e|^2_h = c \log \big( 
|f_{1}|^{2} + |f_{2}|^{2} + \cdots + |f_{k}|^{2}\big) +\text{smooth function}, 
\end{equation*}
where $c $ is a positive real number and 
$f_{1}, \dots, f_{k}$ are local generators of $\mathcal{I}$. 
We say that $h$ has {\textit{divisorial singularities}} 
when the ideal sheaf $\mathcal{I}$ is defined by an effective divisor. 
The dual singular hermitian metric $h^{\vee}$ 
on the dual line bundle $L^\vee$ can be defined to be  
$|e^\vee|_{h^{\vee}}:=|e|_{h}^{-1}$ for the dual local frame $e^\vee$. 
Further, for a morphism $f: Z \to X$, the singular hermitian metric $f^{*}h$ on the pull-back $f^*L$ can also be defined to be  
$|f^* e|_{f^{*}h}:=f^*(| e|_{h})$ for the local frame $f^* e$ of $f^*L$.
Then we have 
$$
\sqrt{-1}\Theta_{h}=-\sqrt{-1}\Theta_{h^\vee}=\deldel \log |e^\vee|^2_{h^{\vee}} \text{\quad  and \quad } 
f^* \sqrt{-1}\Theta_{ h}:=\sqrt{-1}\Theta_{f^* h}.
$$ 

\subsection{Holomorphic sectional curvature and truly flat tangent vectors}\label{subsec-2-2}
In this subsection, we summarize some properties of holomorphic sectional curvature and truly flat tangent vectors. 
For a hermitian metric $g$ on the (holomorphic) tangent bundle $T_X$, 
the holomorphic sectional curvature $H_g$ is defined to be 
$$
H_g([v]):=\frac{\Rur{g}{v}{v}}{|v|_g^{4}}=\frac{\big \langle \sqrt{-1}\Theta_{g}(v, \bar v)(v), v \big \rangle_g}{|v|_g^{4}}
$$
for a non-zero tangent vector $v \in T_X$. 
The holomorphic sectional curvature $H_g$ is said to be {\textit{positive}} 
(resp. {\textit{semi-positive}}) 
if $H_{g}([v]) > 0$  (resp. $H_{g}([v]) \geq 0$) holds 
for any non-zero tangent vector $v \in T_X$. 
Also $H_g$ is said to be {\textit{quasi-positive}} 
if it is semi-positive everywhere and it is positive at some point in $X$. 

In this paper, we handle  only the case of $g$ being a K\"ahler metric 
(that is, the associated $(1,1)$-form $\omega_g$ is $d$-closed). 
In this case, the following symmetries hold\,$:$ 
$$
R_{g}(e_{i}, \bar e_{j}, e_{k}, \bar e_{\ell})
=R_{g}(e_{k}, \bar e_{\ell}, e_{i}, \bar e_{j})
=R_{g}(e_{k}, \bar e_{j}, e_{i}, \bar e_{\ell}). 
$$
The above symmetries lead to the following lemmas.

\begin{lemm}[{\cite[Lemma 4.1]{Yan17c}, \cite[Lemma 2.2]{Mat18} cf. \cite{Bre}, \cite{BKT13}}]\label{lem-ineq}
Let $g$ be a K\"ahler metric of $X$ and $V$ be a subspace of $T_{X,p}$ at a point $p \in X$. 
If a unit vector $v \in V$ is a minimizer of 
the holomorphic sectional curvature $H_g$ on $V$, 
that it, it satisfies 
$$
H_g([v])=\min\{H_g([x])\, |\, {0 \not = x \in V}\}, 
$$
then we have 
$$
2\Rur{g}{v}{x} \geq (1+|\langle v, x \rangle_g|^2) \Rur{g}{v}{v} 
$$
for any unit vector $x \in V$.  
In particular, if the holomorphic sectional curvature $H_g$ is semi-positive, 
a minimizer $v$ of $H_g$ on $V$ satisfies that 
$$
\Rur{g}{v}{x} \geq 0
$$
for any tangent vector $x \in V$. 
\end{lemm}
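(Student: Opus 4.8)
The plan is to run a complex second–variation argument in the spirit of Berger's lemma. Fix the point $p$ and a unit vector $x \in V$, and recall that $\Rur{g}{v}{v}=H_g([v])$ because $|v|_g=1$. Since $V$ is a linear subspace, the vector $w_\zeta:=v+\zeta x$ lies in $V$ for every $\zeta\in\mathbb{C}$, and $w_\zeta\neq 0$ whenever $|\zeta|<1$ (as $|v|_g=|x|_g=1$). For such $\zeta$ the minimizing property of $v$ gives $H_g([w_\zeta])\geq H_g([v])=\Rur{g}{v}{v}$; hence, writing $N(\zeta):=\Rur{g}{w_\zeta}{w_\zeta}$ and $D(\zeta):=|w_\zeta|_g^{4}$ and clearing the positive denominator, we get
$$
N(\zeta)-\Rur{g}{v}{v}\,D(\zeta)\ \geq\ 0 \qquad \text{for all } |\zeta|<1 .
$$

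The next step is to expand $N$ and $D$ as polynomials in $\zeta$ and $\bar\zeta$. By $\mathbb{C}$–multilinearity of the curvature tensor, $N(\zeta)$ has bidegree $\leq(2,2)$ in $(\zeta,\bar\zeta)$; its constant term is $\Rur{g}{v}{v}$, and the coefficient of $|\zeta|^{2}$ is the sum of the four terms $R_g(x,\bar x,v,\bar v)$, $R_g(v,\bar v,x,\bar x)$, $R_g(x,\bar v,v,\bar x)$, $R_g(v,\bar x,x,\bar v)$. Applying the two K\"ahler symmetries $R_g(e_i,\bar e_j,e_k,\bar e_\ell)=R_g(e_k,\bar e_\ell,e_i,\bar e_j)=R_g(e_k,\bar e_j,e_i,\bar e_\ell)$ recalled above, each of these four terms equals $\Rur{g}{v}{x}$, so the $|\zeta|^{2}$–coefficient of $N$ equals $4\,\Rur{g}{v}{x}$. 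On the denominator side, $|w_\zeta|_g^{2}=1+|\zeta|^{2}+2\,\mathrm{Re}\big(\zeta\langle x,v\rangle_g\big)$, hence $D(\zeta)=\big(1+|\zeta|^{2}+2\,\mathrm{Re}(\zeta\langle x,v\rangle_g)\big)^{2}$.

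Then I would substitute $\zeta=\varepsilon e^{\sqrt{-1}\theta}$ into the displayed inequality and average over $\theta\in[0,2\pi]$. Every monomial $\zeta^{a}\bar\zeta^{b}$ with $a\neq b$ has vanishing average, so only the constant and the $|\zeta|^{2}$ terms survive: the average of $N$ is $\Rur{g}{v}{v}+4\varepsilon^{2}\,\Rur{g}{v}{x}$, while the average of $D$ is $(1+\varepsilon^{2})^{2}+2\varepsilon^{2}|\langle x,v\rangle_g|^{2}$ (using $\frac{1}{2\pi}\int_0^{2\pi}(a+b\cos\theta)^{2}\,d\theta=a^{2}+b^{2}/2$). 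Subtracting, dividing by $\varepsilon^{2}$, and letting $\varepsilon\to 0$ gives
$$
4\,\Rur{g}{v}{x}-2\big(1+|\langle x,v\rangle_g|^{2}\big)\,\Rur{g}{v}{v}\ \geq\ 0 ,
$$
which is the asserted inequality, since $|\langle x,v\rangle_g|=|\langle v,x\rangle_g|$. For the final assertion, semi-positivity of $H_g$ forces $\Rur{g}{v}{v}=H_g([v])\geq 0$, so for any nonzero $x\in V$ the inequality applied to $x/|x|_g$ and rescaled by $|x|_g^{2}$ yields $\Rur{g}{v}{x}\geq 0$; the case $x=0$ is trivial.

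The step I expect to be the main obstacle is the bookkeeping in the middle paragraph: expanding the quartic $N(\zeta)$ by multilinearity and then applying the two K\"ahler symmetries in the correct order so that all four mixed $|\zeta|^{2}$–terms collapse to a single copy of $\Rur{g}{v}{x}$. Producing the combinatorial factor $4$ there (rather than $2$) is exactly what yields the sharp constant $1+|\langle v,x\rangle_g|^{2}$ on the right-hand side; the remaining computations are elementary.
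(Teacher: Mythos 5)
Your proof is correct, and it is precisely the second-variation (Berger-type) argument that the paper implicitly relies on by deferring to \cite[Lemma 4.1]{Yan17c} and \cite[Lemma 2.2]{Mat18}: the complex perturbation $w_\zeta=v+\zeta x\in V$, the K\"ahler symmetries collapsing the four $|\zeta|^2$-terms in $N(\zeta)$ to $4\,\Rur{g}{v}{x}$, and the angular averaging followed by $\varepsilon\to 0$ are exactly the standard steps, carried out here for a general subspace $V$ of $T_{X,p}$. (A cosmetic remark: the averaged $N$ also contains a surviving $\varepsilon^4\,\Rur{g}{x}{x}$ term from the degree-$(2,2)$ monomial, but since you divide by $\varepsilon^2$ and let $\varepsilon\to 0$ this is harmless and your conclusion is unaffected.)
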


The above lemma was proved in \cite[Lemma 4.1]{Yan17c} 
when the subspace $V$ in Lemma \ref{lem-ineq} coincides with the  tangent space $T_{X,p}$. 
It is easy to see that the same argument as in \cite[Lemma 4.1]{Yan17c} 
works even in the case of $V$ being a subspace of $T_{X,p}$, 
and thus we omit the proof of Lemma \ref{lem-ineq}. 
Note that we can always take the minimizer of $H_{g}$ 
on a given subspace $V$ of $T_{X,p}$ at a point $p \in X$, 
since the holomorphic sectional curvature can be regarded as a smooth function 
on the projective space bundle $\mathbb{P}(T_X ^\vee)$ 
(that is, the set of all complex lines $[v]$ in $T_X$) and 
$\mathbb{P}(V^\vee) \subset \mathbb{P}(T_{X,p} ^\vee)$ is compact. 

Now we define truly flat tangent vectors and 
the invariant $n_{{\rm{tf}}}{(X, g)}$ introduced in \cite{HLWZ}.
We remark that the invariant $n_{{\rm{tf}}}{(X, g)}$ 
was denoted by the different notation $r_{{\rm{tf}}}^-$ in \cite{HLWZ}.

\begin{defi}[Truly flat tangent vectors and the invariant $n_{{\rm{tf}}}{(X, g)}$]\label{def-flat}
Let $(X, g)$ be a K\"ahler manifold. 
\ \\
\quad $\bullet$ A tangent vector $v \in T_X$ at $p$ is said to be {\textit{truly flat}} with respect to $g$ 
if $v$ satisfies that 
$$\text{
$R_{g}(v, \bar x, y,\bar z)=0$ 
for any tangent vectors  $x, y, z \in T_{X,p}$. 
}
$$
\quad $\bullet$ We define the subspace $V_{{\rm{flat}},p}$ of $T_{X,p}$ at $p$  by 
$$
V_{{\rm{flat}},p}:=\{v \in T_{X,p} \, |\, v \text{ is a truly flat tangent vector in }T_{X,p} \}.  
$$ 
\quad $\bullet$ We define the invariants $n_{{\rm{tf}}}{(X, g)}_p$ and $n_{{\rm{tf}}}{(X, g)}$ by 
$$
n_{{\rm{tf}}}{(X, g)}_p:=\dim X -  \dim V_{{\rm{flat}},p} \text{\quad  and \quad}
n_{{\rm{tf}}}{(X, g)}:=\dim X - \inf_{p \in X} \dim V_{{\rm{flat}},p}. 
$$
\end{defi}
It is easy to see that the  invariant $n_{{\rm{tf}}}{(X, g)}_p$ is 
lower semi-continuous with respect to $p \in X$ in the classical topology. 
In particular, if we have the equality $n_{{\rm{tf}}}{(X, g)}_p=n_{{\rm{tf}}}{(X, g)}$  at $p$, 
the same equality holds on a neighborhood of $p$. 
The following lemma gives a characterization of truly flat tangent vectors 
in terms of holomorphic sectional curvature and bisectional curvature.

\begin{lemm}[cf. {\cite{HLWZ}}]\label{lemm-flat}
Let $g$ be a K\"ahler metric of $X$ with semi-positive holomorphic sectional curvature 
and $V$ be a subspace of $T_{X,p}$ at a point $p \in X$. 
If a tangent vector $v \in T_X$  satisfies  that 
$$
H_g([v])=0 \quad \text{ and } \quad \Rur{g}{v}{w}=0 \text{ for any tangent vector $w \in V$}, 
$$
then $v$ satisfies that 
$$
R_{g}(v, \bar x, y, \bar z) =0\text{ for any tangent vectors } x,y,z \in V. 
$$
In particular, if $v$ satisfies the above assumptions for any tangent vector $w \in T_{X,p}$, 
then $v$ is a truly flat tangent vector at $p$. 
\end{lemm}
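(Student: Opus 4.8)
The plan is to pass to the subspace $V':=V+\mathbb{C}v$ of $T_{X,p}$ (one may assume $v\neq 0$), on which the unit vector $v/|v|_g$ is a minimizer of the holomorphic sectional curvature $H_g$, because $H_g\geq 0$ and $H_g([v])=0$. The first step is to show that the \emph{bisectional} curvature from $v$ already vanishes on all of $V'$, namely
\begin{equation*}
R_g(v,\bar v,a,\bar b)=0\qquad\text{for all }a,b\in V'.\tag{$\dagger$}
\end{equation*}
Indeed, the sesquilinear form $Q(a,b):=R_g(v,\bar v,a,\bar b)=\langle\sqrt{-1}\Theta_g(v,\bar v)(a),b\rangle_g$ on $V'$ is Hermitian by the K\"ahler symmetries, and it is positive semi-definite by the ``in particular'' part of Lemma \ref{lem-ineq} applied to the subspace $V'$ and the minimizer $v$. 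By the hypothesis $Q(w,w)=\Rur{g}{v}{w}=0$ for every $w\in V$, and also $Q(v,v)=H_g([v])\,|v|_g^{4}=0$. A positive semi-definite Hermitian form $Q$ with $Q(x,x)=0$ for all $x$ in a spanning set is identically zero (by the Cauchy--Schwarz inequality its radical is a subspace containing that set), and since $V\cup\{v\}$ spans $V'$ we get $(\dagger)$.

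The second step uses the semi-positivity of $H_g$ along complex lines through $v$. Fix $w\in V$ and consider the real-analytic function
\begin{equation*}
F(\zeta):=R_g\big(v+\zeta w,\,\overline{v+\zeta w},\,v+\zeta w,\,\overline{v+\zeta w}\big)=H_g([v+\zeta w])\,|v+\zeta w|_g^{4}\qquad(\zeta\in\mathbb{C}),
\end{equation*}
which is $\geq 0$ for $\zeta$ near $0$ and vanishes at $\zeta=0$. Expanding by multilinearity and collecting the parts homogeneous of degree $k$ in $(\zeta,\bar\zeta)$: the degree-$0$ part is $\Rur{g}{v}{v}=0$; the degree-$1$ part is $4\,\mathrm{Re}\big(\zeta\,R_g(v,\bar v,w,\bar v)\big)$, which vanishes by $(\dagger)$; and the degree-$2$ part equals $2\,\mathrm{Re}\big(\zeta^{2}R_g(w,\bar v,w,\bar v)\big)+c\,|\zeta|^{2}$, where $c=R_g(w,\bar w,v,\bar v)+R_g(w,\bar v,v,\bar w)+R_g(v,\bar w,w,\bar v)+\Rur{g}{v}{w}$ is a sum of four terms each equal to $\Rur{g}{v}{w}=0$ by the hypothesis and the K\"ahler symmetries. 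Thus $R_g(w,\bar v,w,\bar v)$ is the coefficient of the lowest-order homogeneous part of $F$; writing $\zeta=re^{i\theta}$ and letting $r\to0^{+}$, the inequality $F\geq 0$ forces $R_g(w,\bar v,w,\bar v)=0$. The next homogeneous part of $F$ then has degree $3$ and equals $4|\zeta|^{2}\,\mathrm{Re}\big(\zeta\,R_g(w,\bar w,w,\bar v)\big)$, and the same argument yields $R_g(w,\bar w,w,\bar v)=0$ for every $w\in V$.

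The final step is to polarize the identity $R_g(w,\bar w,w,\bar v)=0$ in the variable $w\in V$: using the K\"ahler symmetry $R_g(a,\bar b,c,\bar d)=R_g(c,\bar b,a,\bar d)$ to symmetrize the two holomorphic slots, one extracts $R_g(y,\bar x,z,\bar v)=0$ for all $x,y,z\in V$; conjugating (this replaces the identity by $R_g(x,\bar y,v,\bar z)=0$) and applying the same symmetry once more gives $R_g(v,\bar x,y,\bar z)=0$ for all $x,y,z\in V$, which is the claim. Taking $V=T_{X,p}$ yields the concluding assertion that $v$ is a truly flat tangent vector at $p$. The routine but delicate point is the curvature expansion of $F(\zeta)$ together with the K\"ahler-symmetry bookkeeping needed to see that all homogeneous parts of $F$ of degree $\leq 2$ vanish; the essential structural input is Lemma \ref{lem-ineq}, which supplies the positive semi-definiteness of $Q$ and hence $(\dagger)$.
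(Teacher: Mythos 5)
Your argument is correct, and at its core it is the same complex-line polarization used in the paper (and in \cite[Lemma 2.1]{HLWZ}): expand $F(\zeta)=H_g([v+\zeta w])|v+\zeta w|_g^4\ge 0$, peel off the lowest nonvanishing homogeneous part as $\zeta\to 0$, and force each curvature coefficient to vanish in turn before polarizing at the end. Where you differ from the paper's sketch is in two ways. First, you bring in Lemma \ref{lem-ineq} as a front-end: since $H_g\ge 0$ and $H_g([v])=0$, the vector $v$ is a minimizer on $V'$, so the Hermitian form $Q(a,b)=R_g(v,\bar v,a,\bar b)$ is positive semi-definite on $V'$, and then Cauchy--Schwarz upgrades the hypothesis $Q(w,w)=0$, $Q(v,v)=0$ to the full bisectional vanishing $(\dagger)$. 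The paper instead gets $R_g(v,\bar v,v,\bar w)=0$ directly from the degree-$1$ coefficient of the expansion, without invoking Lemma \ref{lem-ineq}; both routes work, and $(\dagger)$ is the tidier statement. Second, and more substantively, you explicitly handle the degree-$2$ part, showing the $|\zeta|^2$ contribution collapses to $4\,\Rur{g}{v}{w}=0$ and then using the $\zeta^2$ piece to extract $R_g(w,\bar v,w,\bar v)=0$. The paper's displayed expansion omits this order (and has the exponents on $r$ shuffled), so your version fills in a step that the paper leaves to the reader or to \cite{HLWZ}. Your concluding polarization, symmetrizing the two holomorphic slots of $R_g(\cdot,\bar\cdot,\cdot,\bar v)$ and then conjugating, is standard and correct.
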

\begin{proof}
When the holomorphic sectional curvature is semi-negative and the subspace $V$ coincides with 
the tangent space $T_{X,p}$, 
the same conclusion was proved in \cite[Lemma 2.1]{HLWZ}. 
For reader's convenience, we will give a sketch of the proof.

For an arbitrary complex number $re^{\sqrt{-1}\theta}$,  
we obtain that 
\begin{align*}
0 &\leq H([v+re^{\sqrt{-1}\theta}w])|v+re^{\sqrt{-1}\theta}w|_g^4\\
&=2 \Re(e^{\sqrt{-1}\theta}R_g(v,\bar v,v,\bar w) ) r^3 
+ 2 \Re(e^{\sqrt{-1}\theta}R_g(v,\bar w,w,\bar w) ) r + R_g(w,\bar w,w,\bar w) 
\end{align*}
from the assumptions $\Rur{g}{v}{v}=0$ and $\Rur{g}{v}{w}=0$. 
Here we used the symmetries obtained from K\"ahler metrics. 
If $R_g(v,\bar v,v,\bar w)$ is not zero, 
we have a contradiction by suitably choosing $\theta$ such that 
$\Re(e^{\sqrt{-1}\theta}R_g(v,\bar v,v,\bar w) ) <0$ and by taking a sufficiently large $r>0$. 
Hence we obtain $R_g(v,\bar v,v,\bar w)=0$. 
By repeating the same argument as above for $e^{\sqrt{-1}\theta}R_g(v,\bar w,w,\bar w) $, 
we can see that $R_g(v,\bar w,w,\bar w)=0$ for any tangent vector $w \in T_X$. 
Then we can easily check the desired equality by the standard polarization argument. 
\end{proof}

\section{Proof of the results}\label{Sec-3}

\subsection{Proof of Theorem \ref{thm-main}}\label{subsec-3-1}
In this subsection, we give a proof of Theorem \ref{thm-main}. 
The arguments in this subsection will be modified  
to handle  almost holomorphic maps in the proof of Theorem \ref{thm-mainn}. 
This subsection is the core of this paper.

\begin{theo}[=Theorem \ref{thm-main}]\label{r-thm-main}
Let $(X, g)$ be a compact K\"ahler manifold 
with semi-positive holomorphic sectional curvature  and  
let $Y$ be a compact K\"ahler manifold with pseudo-effective canonical bundle. 
Further let $\phi: X \to Y$ be a morphism from $X$ to $Y$. 
Then the following statements hold\vspace{0.1cm}\,$:$
\begin{itemize}
\item[(1)] $\phi$ is a smooth morphism $($that is, a submersion$)$. \vspace{0.2cm}
\item[(2)] The standard exact sequence of vector bundles obtained from $(1)$
$$
0 \xrightarrow{\quad \quad} T_{X/Y}:=\Ker d\phi_* \xrightarrow{\quad \quad}  
T_X \xrightarrow{\quad d\phi_* \quad}
\phi^{*} T_Y
\xrightarrow{\quad \quad} 0
$$ 
splits. 
Moreover its holomorphic splitting
$$
T_X = T_{X/Y} \oplus \phi^{*} T_Y
$$
coincides with the orthogonal decomposition of $T_{X}$ with respect to $g$. 
Here $T_X$ $($resp. $T_Y$$)$ is the $($holomorphic$)$ tangent bundle of $X$ 
$($resp. $Y$$)$.
\vspace{0.2cm}
\item[(3)] Let $g_Q$ be the hermitian metric on $\phi^{*} T_Y$ induced by 
the above exact sequence and the given metric $g$. 
Then there exists a hermitian metric  $g_Y$ on $T_Y$ with the following properties\,$:$ 
\vspace{0.1cm}
\begin{itemize}
\item[$\bullet$] $g_Q$ is obtained from the pull-back of $g_Y$ $($namely, $g_Q=\phi^* g_Y$$)$. 
\vspace{0.1cm}
\item[$\bullet$] The holomorphic sectional curvature of $(Y, g_Y)$ is identically zero. 
In particular, the image $Y$ is  flat and it admits a finite \'etale cover $T \to Y$
by a complex torus $T$. 
\end{itemize}
\vspace{0.2cm}
\item[(4)] $\phi$ is locally trivial if we further assume that $X$ is projective. 
\end{itemize}
\end{theo}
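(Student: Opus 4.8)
The plan is to exploit the pseudo-effectivity of $K_Y$ to produce a singular hermitian metric whose pull-back to $X$ interacts with the curvature of $g$, and then to run a scalar-curvature (or determinant-curvature) integration argument in the spirit of Heier–Wong. Write $n = \dim X$, $r = \dim Y$. Since $K_Y$ is pseudo-effective, it carries a singular hermitian metric $h_Y$ with $\sqrt{-1}\Theta_{h_Y} \geq 0$ as a current. First I would reduce to the locus where $\phi$ is a submersion: let $X_0 \subset X$ be the Zariski-open set over which $\mathrm{rk}\, d\phi = r$, so on $X_0$ we have the exact sequence $0 \to T_{X/Y} \to T_X \to \phi^* T_Y \to 0$ and hence a holomorphic isomorphism $\det T_{X/Y} \otimes \phi^* K_Y^{-1} \cong K_X^{-1}$, where along the way $\phi^* K_Y$ acquires the pull-back metric $\phi^* h_Y$, which is still (semi-)positively curved as a current. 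The key analytic input is that $g$ (being a metric with semi-positive holomorphic sectional curvature) forces the scalar curvature to be $\geq 0$, and in fact, by Lemma \ref{lem-ineq}, at each point one can choose a minimizer $v$ of $H_g$ and conclude $R_g(v,\bar v, x, \bar x) \geq 0$ for all $x$; iterating this over a well-chosen orthonormal frame (the partial-positivity trick from \cite{Mat18}) yields positivity of the induced metric on suitable quotients of $T_X$.

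Second, I would set up the contradiction that proves (1). Suppose $\phi$ is not a submersion; then the discriminant divisor is nonempty, and the isomorphism $K_X^{-1} \cong \det T_{X/Y} \otimes \phi^* K_Y^{-1}$ over $X_0$ degenerates, so on all of $X$ the induced singular metric on $\det T_{X/Y}^{\vee} \otimes \phi^* K_Y$ (equivalently on $K_X \otimes \phi^*K_Y^{-1}$-type combinations) picks up a genuinely singular (divisorial) contribution with a strictly positive mass along the branch locus. Integrating the relevant curvature current against $\omega_g^{n-1}$ and comparing with the scalar-curvature computation — using \eqref{eq-1a}, \eqref{eq-2a} and the vanishing forced by semi-positivity of $H_g$ on the vertical and horizontal pieces — gives a sign contradiction: the left-hand side must be both $\geq 0$ (from $H_g \geq 0$ and from $\sqrt{-1}\Theta_{\phi^* h_Y} \geq 0$) and $< 0$ (from the divisorial singularity). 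Hence $\phi$ is everywhere a submersion, proving (1).

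Third, with $\phi$ smooth, I would show every horizontal tangent vector is truly flat. The point is that the above integration argument is now an equality of nonnegative quantities summing to zero, forcing each term to vanish: the pull-back curvature $\phi^* \sqrt{-1}\Theta_{h_Y}$ is zero (so $\nu(Y) = 0$ and $K_Y$ is numerically trivial, whence $Y$ is, up to a finite étale cover, a complex torus by the Beauville–Bogomolov-type decomposition or by the fact that a numerically trivial $K_Y$ on a compact Kähler manifold forces a flat metric), and moreover $R_g(w,\bar w, w, \bar w) = 0$ together with $R_g(w, \bar w, x, \bar x) = 0$ for $w$ horizontal and $x$ arbitrary. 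Applying Lemma \ref{lemm-flat} then upgrades this to $R_g(w, \bar x, y, \bar z) = 0$ for all $x,y,z$, i.e. $w$ is truly flat. Once the horizontal subbundle consists of truly flat vectors, the second fundamental form $A$ (in the notation of subsection \ref{subsec-2-1}) of $T_{X/Y} \subset T_X$ must vanish — this is where one uses that truly flat vectors are curvature-annihilating on both slots, making the relevant cross-terms in \eqref{eq-1} and its companions vanish — which by the criterion quoted in subsection \ref{subsec-2-1} gives the holomorphic splitting $T_X = T_{X/Y} \oplus \phi^* T_Y$ coinciding with the $g$-orthogonal decomposition, proving (2). For (3), the induced metric $g_Q$ on $\phi^* T_Y$ then descends to a metric $g_Y$ on $T_Y$ (because the horizontal distribution is $g$-parallel along fibers and the flatness makes the descent well defined), and the computation of its holomorphic sectional curvature reduces, via the splitting, to curvature terms of $g$ restricted to horizontal directions, all of which vanish; flatness of $(Y, g_Y)$ plus compactness gives the finite torus cover. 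Finally (4), when $X$ is projective, local triviality of the smooth morphism $\phi$ follows from the rational connectedness / rigidity of the fibers: the fibers $F$ are rationally connected (by Theorem \ref{thm-mai}, applied fiberwise, since the flat part has been split off and $n_{\mathrm{tf}}$ of a fiber equals its dimension), and a smooth family of rationally connected projective varieties over a base that is (an étale quotient of) an abelian variety is isotrivial — one can argue via the existence of a relative MRC / relative canonical model being trivial, or via $\phi^* g_Y$-parallel transport of the fiberwise complex structure using the flat splitting.

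\medskip

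The main obstacle I expect is the \emph{global integration/sign argument in step two}: making precise that the divisorial singularity of the induced metric contributes a strictly negative term that cannot be absorbed, while simultaneously controlling the (possibly only $L^1_{\mathrm{loc}}$, not smooth) curvature current of $\phi^* h_Y$ and relating the pointwise curvature inequalities from Lemma \ref{lem-ineq} — which require choosing minimizing frames that vary only measurably — to a single integral identity. In particular one must be careful that the "scalar curvature $\geq 0$" statement is delicate: $H_g \geq 0$ does not directly give $\Ric \geq 0$, so the correct quantity to integrate is the curvature of the line bundle $\det Q = \phi^* K_Y^{-1}$ (or of $\det T_{X/Y}$), controlled by summing \eqref{eq-1a}/\eqref{eq-2a} over an orthonormal frame adapted to the minimizer, and the subtlety is ensuring this sum is genuinely $\geq 0$ everywhere, which is exactly the partial-positivity lemma from \cite{Mat18} that must be invoked carefully rather than the naive scalar curvature.
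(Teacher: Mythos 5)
Your proposal captures the broad strategy that the paper uses --- a determinant-curvature integral against $\omega^{n-1}$, the minimizer-frame argument via Lemma \ref{lem-ineq}, truly flat horizontal vectors via Lemma \ref{lemm-flat}, and vanishing second fundamental form to get the holomorphic splitting --- but it leaves a genuine gap at the crucial step and gets (4) wrong.

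\textbf{The central gap.} You flag at the end, but do not resolve, the difficulty that $H_g\geq 0$ does not control the full trace of the relevant curvature pointwise. After the minimizing-frame construction (Claim \ref{claim-semi} in the paper, your ``partial-positivity trick''), one obtains, via \eqref{eq-8} and \eqref{eq-1a}, the pointwise bound $\sqrt{-1}\Theta_G(e_i,\bar e_i)\geq 0$ only for the \emph{horizontal} vectors $e_1,\dots,e_m$. For the vertical vectors $e_j$ ($j>m$) there is no such pointwise statement, and the partial-positivity lemma gives nothing in that direction. The paper's decisive new input is Claim \ref{claim-key}: the integral $\int_{X_0}\sum_{j>m}\sqrt{-1}\Theta_G(e_j,\bar e_j)\,\omega^n$ is non-negative. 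This is proved by Fubini plus an integration by parts (Stokes) on each fiber, which recasts the fiber integral as $\int_{X_y}F^{-3}\sqrt{-1}\partial F\wedge\bar\partial F\wedge\omega^{n-m-1}$ with $F=|\phi^*dt|^2_{\Lambda^m h}$ --- manifestly non-negative. Without this, the sign argument you sketch does not close: you have no way to conclude that the smooth part $\gamma$ of $\pi^*\sqrt{-1}\Theta_G$ integrates non-negatively, and hence no way to force $E=0$.

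\textbf{The metric on $K_Y$.} You begin from a singular psef metric $h_Y$ on $K_Y$ and pull it back, and then, as you anticipate, run into the problem that $\sqrt{-1}\Theta_{\phi^*h_Y}$ is only an $L^1_{\mathrm{loc}}$ current with no a priori relation to $g$, so there is no pointwise comparison available. The paper avoids this entirely: it never uses a psef metric on $K_Y$. Instead it constructs a metric $G$ on $\phi^*K_Y^\vee$ \emph{from $g$ itself}, as the quotient metric induced by $\Lambda^m g$ under the natural sheaf injection $\phi^*K_Y\to\Lambda^m\Omega_X$. The singularities of this $G$ coincide exactly with the non-submersion locus of $\phi$, and its curvature is tied to $H_g$ by construction. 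The psef-ness of $K_Y$ enters only once, at the numerical level, as the inequality $\int_{\bar X}c_1(\pi^*\phi^*K_Y^\vee)\wedge\pi^*\omega^{n-1}\leq 0$. Your framing of a ``sign contradiction'' also has the signs arranged confusingly: the divisorial part $[E]$ contributes a \emph{non-negative} term, and it is the psef-ness of $K_Y$ that makes the left side non-positive; the conclusion is that every term vanishes, in particular $E=0$.

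\textbf{Flaw in (4).} Your argument assumes the fibers of $\phi$ are rationally connected, invoking Theorem \ref{thm-mai} ``applied fiberwise.'' But $\phi$ is an arbitrary morphism to a $Y$ with pseudo-effective $K_Y$ --- it is not an MRC fibration --- and the fibers need not be rationally connected at all (take $X$ a product of two tori with the flat metric, $\phi$ a projection; the fibers are tori). The paper's proof of (4) uses only the holomorphic splitting of $T_X$ from (2), the classical Ehresmann theorem, and \cite[Lemma 3.19]{Hor07}, with no geometric assumption on the fibers.
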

\begin{proof}
Throughout this proof, 
let $(X, g)$ be a compact K\"ahler manifold 
with the semi-positive holomorphic sectional curvature $H_g$ and 
let $\phi: X \to Y$ be a morphism (that is, a surjective holomorphic map) 
to  a compact K\"ahler manifold $Y$ with the pseudo-effective canonical bundle $K_Y$. 
For simplicity, we put $n:=\dim X$ and $m:= \dim Y$. 
We will divide the proof into five steps to refer later. 

\begin{step}[Singularities of induced singular hermitian metrics]\label{step1}

Our first purpose is to prove that $\phi$ is actually a smooth morphism. 
For this purpose, in this step, we first construct a possibly \lq \lq singular" hermitian metric $G$ on the line bundle $\phi^* K_Y^{\vee}$ 
from the given K\"ahler metric $g$ of $X$ such that the singularities of $G$ corresponds to the non-smooth locus of $\phi$. 
This enables us to  reduce our first purpose to observe the singularities of $G$. 
Moreover, in this step, we show  that the pull-back $\pi^* G$ has divisorial singularities and 
its curvature current can be decomposed into a smooth part and a divisorial part, 
after we take a suitable modification $\pi : \bar X \to X$.

Now we have the injective sheaf morphism 
\begin{equation*}
(\phi^{*} K_Y, H)
\xrightarrow{\quad d\phi^* \quad} (\Lambda^m \Omega_X, \Lambda^m h)
\end{equation*}
between the vector bundle $\Lambda^m \Omega_X:=\Lambda^m T_X^\vee$ of the $m$-th exterior product 
and the line bundle $\phi^{*} K_Y:=\phi^{*} \Lambda^m \Omega_Y$. 
We interchangeably use the words  \lq \lq line bundles" and \lq \lq invertible sheaves" 
(also \lq \lq vector bundles" and \lq \lq locally free sheaves") throughout this paper. 
Note that the above morphism is not a bundle morphism 
since the rank drops on the non-smooth locus of $\phi$, 
but it is an injective morphism as sheaf morphisms. 
Let $h$ be the dual hermitian metric of $g$ 
on the (holomorphic) cotangent bundle $\Omega_X=T_X^{\vee}$ and 
let $\Lambda^m h$ be the induced metric on $\Lambda^m \Omega_X$. 
Then, from the above morphism, we can construct a possibly singular hermitian metric $H$ on $\phi^{*} K_Y$ to be  
$$
|e|_H:=|d\phi^* (e)|_{\Lambda^m h} \text{ for a local frame $e$ of $\phi^{*} K_Y$}. 
$$
From now on, we mainly consider the dual singular hermitian metric $G:=H^{\vee}=H^{-1}$ on $\phi^{*} K_Y^{^\vee}$. 
For a local coordinate $(t_1, t_2, \dots, t_m)$ of $Y$, 
the $m$-form $dt:=dt_1\wedge dt_2 \wedge \cdots \wedge dt_m$ naturally determines 
the local frame of $\phi^{*} K_Y$, which we denote by the same notation $dt$. 
By the definitions of the curvature and the dual metric, 
the curvature current of $(\phi^{*} K_Y^{^\vee}, G)$ can be locally written as 
$$
\sqrt{-1}\Theta_G:=\sqrt{-1}\Theta_G(\phi^{*} K_Y^{^\vee})=
\deldel \log |\phi^* dt|^2_{\Lambda^m h}, 
$$
where $\phi^* dt$ is the pull-back of the $m$-form $dt$ by $\phi$. 
We remark that  the pull-back $\phi^* dt$  coincides with 
the image $d\phi^* (dt)$ of the section $dt$ of $\phi^*K_Y$ by $d\phi^{*}$. 

By the above expression, 
it can be shown that the singular locus of $G$ 
(that is, the polar set of the quasi-psh function $\log |\phi^* dt|_{\Lambda^m h}$) 
coincides with the non-smooth locus of $\phi$, 
since the zero locus of the section $\phi^* dt$ of 
$\Lambda^m \Omega_X$ is equal to  the non-smooth locus of $\phi$. 
Therefore it is sufficient  for our first purpose (that is, the proof of the smoothness of $\phi$) to prove 
that $G$ is actually a smooth hermitian metric. 

We take a (log) resolution $\pi: \bar X \to X$ of the degenerate ideal $\mathcal{I}$ of the above sheaf morphism. 
The degenerate ideal $\mathcal{I}$ is the ideal sheaf 
generated by the coefficients of $\phi^* dt$ with respect to local frames of $\Lambda^m \Omega_X$. 
Then we obtain the following claim\,$:$
\begin{claim}\label{claim-div} 
Let $Z$ be the non-smooth locus of $\phi$. 
Then the following statements hold\,$:$ 
\ \vspace{0.1cm}\\
\quad $\bullet$ $ \pi^{-1}(Z)  $ has codimension one. 
\vspace{0.1cm}\\
\quad $\bullet$ $\pi: \bar X \setminus  \pi^{-1}(Z)  \cong X \setminus Z $. 
\vspace{0.1cm}\\
\quad $\bullet$ $\phi^* G$ has divisorial singularities along $\pi^{-1}(Z) $. 
\vspace{0.1cm}\\ 
More precisely, the pull-back $\pi^{*}\sqrt{-1}\Theta_G$ of 
the curvature current $\sqrt{-1}\Theta_G$ can be written as 
\begin{align*}
\pi^{*}\sqrt{-1}\Theta_G:=\deldel \log \pi^{*} (|\phi^* dt|^2_{\Lambda^m h})=\gamma + [E], 
\end{align*}
where $\gamma$ is a smooth $(1,1)$-form on $\bar X$ and $[E]$ is the integration current 
defined by an effective divisor $E$. 
\end{claim}
\begin{proof}[Proof of Claim \ref{claim-div}]
The subvariety $Z$ coincides with the support of the cokernel $\mathcal{O}_X/\mathcal{I}$, 
and thus the first and second statements are obvious by the choice of $\pi$. 
However the third statement seems to be a subtle problem, 
since we do not know whether or not the metric $G$ itself has neat analytic singularities 
(see Remark \ref{rem-div} for more details).

To check the third statement, we fix an arbitrary point $p \in \bar X$. 
When $p$ is outside $\pi^{-1}(Z)$, 
the metric $G$ is smooth on a neighborhood of $\pi(p)$ 
since $Z$ also coincides with the zero locus of $\phi^* dt$. 
The third statement is obvious in this case, 
and thus we may assume that $p \in \pi^{-1}(Z)$. 

We take a local frame $\{s_{i}\}_{i=1}^{N}$  of $\Lambda^m \Omega_X$ on a neighborhood of $\pi(p)$. 
Here we put $N:=\binom{n}{m}$ for simplicity.  
The holomorphic $m$-form $\phi^* dt$ can be locally written as 
$$\phi^* dt=\sum_{i=1}^{N} f_i s_i \text{  on a neighborhood of $\pi(p)$ }$$ 
for some holomorphic functions $\{f_{i}\}_{i=1}^{N}$. 
The degenerate ideal $\mathcal{I}$ is generated by $\{f_i\}_{i=1}^{N}$ and 
$\pi^{-1}\mathcal{I}=\mathcal{I} \cdot \mathcal{O}_{\bar X}$ is 
the ideal sheaf associated to an effective divisor $E$. 
Let $t$ be a local holomorphic function such that $t$ determines the effective divisor $E$. 
Then it follows that  $g_i:=\pi^{*} f_{i}/t$ is a holomorphic function and the common zero locus $\cap_{i=1}^N g_i^{-1}(0)$ is empty 
from the choice of $\pi$.
Therefore a simple computation yields
$$
\log \pi^{*} (|\phi^* dt|^2_{\Lambda^m h})=\log |t|^2+\log \sum_{i, j=1}^{N} g_{i} 
\bar{g_{j}}\pi^{*}\langle s_i, s_j \rangle_{\Lambda^m h}. 
$$
It can be proven that the Levi form of the first term is equal to the integration current $[E]$ 
by the Poincar\'e-Lelong formula. 
On the other hand, it follows that the Levi form of the second term determines a smooth $(1,1)$-form $\gamma$, 
since it is easy to see that the function 
$$
\sum_{i, j=1}^{N} g_{i} \bar{g_{j}}\pi^{*}\langle s_i, s_j \rangle_{\Lambda^m h}
$$ 
is a non-vanishing smooth function. 
\end{proof}
\begin{rem}\label{rem-div}
$\bullet$ It follows that the smooth form $\gamma$ can be identified with the curvature $\sqrt{-1}\Theta_G$ 
under the isomorphism $\pi: \bar X \setminus \pi^{-1}(Z) \cong X \setminus Z$ from the second and third properties.
\vspace{0.1cm}\\
$\bullet$ The metric $G$ itself may not have neat analytic singularities 
although the pull-back $\pi^* G$ by $\pi$ has divisorial singularities. 
For example, in the case of $n=2$ and $m=1$, 
we consider the following situation\,$:$ 
\begin{align*}
\phi^{*} dt=z_1 s_1 +z_2 s_2 \text{ and } h=\left[
\begin{array}{cc}
2 & 1 \\
1 & 2 \\
\end{array}
\right]
\text{with respect to a local frame $(s_1, s_2)$ of $\Omega_X$}. 
\end{align*}
Here $(z_1, z_2)$ is a local coordinate of $X$. 
Then we can see that the function   
$$
\frac{|\phi^{*} dt|^2_h}{|z_1|^2+|z_2|^2}=
\frac{2|z_1|^2+\bar z_1 z_2 + z_1 \bar z_2 + 2|z_2|^2}{|z_1|^2+|z_2|^2}
$$
can not be extended to a smooth function defined at the origin. 
Of course, when we take a resolution of the degenerate ideal 
(which is just one point blow-up in this case), 
we can easily check that the pull-back of the above function is a non-vanishing smooth function. 
\end{rem}
\end{step}

\begin{step}[Construction of orthonormal basis in the horizontal direction]\label{step2}
In this step, by using the argument in \cite[Lemma 3.5]{Mat18}, 
we will choose a suitable orthonormal basis of $T_{X}$ 
at a smooth point $p$ of $\phi$,  
in order to obtain a partial positivity of $\sqrt{-1}\Theta_{G}$ and $\gamma$ in the horizontal direction.

For a given point $p \in X$ at which $\phi$ is smooth, 
we consider 
the standard exact sequence 
\begin{equation*}\label{eq-stan}
0 \xrightarrow{\quad \quad} T_{X/Y}:=\Ker d\phi_* \xrightarrow{\quad \quad}  
T_X \xrightarrow{\quad d\phi_* \quad}
\phi^{*} T_Y
\xrightarrow{\quad \quad} 0 \text{ at } p.
\end{equation*} 
In the proof, we say that a tangent vector $v \in T_X$ is in the {\textit{horizontal  direction}} (resp. in the {\textit{vertical direction}})
in the case of $v \in (T_{X/Y})^{\bot} $ (resp. $v\in T_{X/Y}$). 
Here $ (T_{X/Y})^{\bot}$ is the orthogonal complement of $T_{X/Y}$ in $T_X$ with respect to $g$ 
and it is identified with $\phi^{*}T_Y$ at $p$. 
Then we obtain the following claim\,$:$

\begin{claim}\label{claim-semi}
For a smooth point $p$ of $\phi$, 
there exists an orthonormal basis $\{e_k \}_{k=1}^{n}$ of $T_X$ at  $p$ with the following properties\,$:$
\ \vspace{0.1cm}\\
\quad $\bullet$ $\{e_i \}_{i=1}^{m}$ is an orthonormal basis of $(T_{X/Y})^{\bot}$ at $p$.  
\vspace{0.1cm}\\
\quad $\bullet$ 
$\Rur{g}{e_i}{e_j} \geq 0$ for any $1 \leq i, j \leq m$.  
\vspace{0.1cm}\\
\quad $\bullet$ $\sqrt{-1}\Theta_{G}(e_i, \bar e_i) \geq 0  \text{ for any $i=1,2,\dots, m$. }$
\end{claim}

\begin{proof}[Proof of Claim \ref{claim-semi}]
We first take an arbitrary orthonormal basis $\{e_k \}_{k=1}^{n}$ of $T_X$ at $p$ 
such that 
$$
(T_{X/Y})^{\bot}=\Spn\langle \{e_i \}_{i=1}^{m} \rangle
\quad \text{ and }  \quad 
T_{X/Y} = \Spn\langle \{e_j \}_{j=m+1}^{n} \rangle. 
$$
By choosing a new orthonormal basis $\{e_i \}_{i=1}^{m}$ of $(T_{X/Y})^{\bot}$, 
we may assume that $e_{1}$ is the minimizer of $H_{g}$ on 
$(T_{X/Y})^{\bot}=\Spn\langle \{e_k \}_{k=1}^{m} \rangle$, 
that is, the unit tangent vector $e_{1}$ satisfies that 
$$
H_g([e_1])=\min\{ H_g([v])\, |\, 0 \not = v \in \Spn \langle \{e_k \}_{k=1}^{m} \rangle \}. 
$$
After we fix the tangent vector $e_1$ chosen as above, 
we choose an orthonormal basis $\{e_i \}_{i=2}^{m}$ of 
$$
(T_{X/Y} \oplus \Spn \langle e_1 \rangle )^{\bot}=\Spn\langle \{e_k \}_{k=2}^{m} \rangle
$$ such that 
$e_{2}$ is the minimizer of $H_{g}$ on $\Spn\langle \{e_k \}_{k=2}^{m} \rangle$. 
By repeating this process, 
we can construct an orthonormal basis $\{e_i \}_{i=1}^{m}$ of $(T_{X/Y})^{\bot}$ 
satisfying that 
\begin{align*}
H_g([e_i])=\min\{H_g([v])\, |\, 0 \not = v \in \Spn \langle \{e_k \}_{k=i}^{m} \rangle\}. 
\end{align*}
for any $i = 1,2,\dots, m$. 

Then, for this orthonormal basis,  we can prove that 
$$\text{
$\Rur{g}{e_i}{e_j} \geq 0$ for any $1 \leq i, j \leq m$. 
}
$$  
Indeed, we may assume that $ i \leq j $
by the symmetry 
$$
\Rur{g}{e_i}{e_j}=\Rur{g}{e_j}{e_i}.
$$
Further, for $ i \leq j $, 
the tangent vector $e_i$ is the minimizer of $H_g$ on the subspace $\Spn \langle \{e_k \}_{k=i}^{m} \rangle$ which contains $e_j$. 
Therefore it follows that  $\Rur{g}{e_i}{e_j}$ is non-negative 
from Lemma \ref{lem-ineq}.

By applying the formulas (\ref{eq-8}) and  (\ref{eq-1a})  to 
the exact sequence 
\begin{equation*}\label{eq-surj}
0 \xrightarrow{\quad \quad} \Ker d\phi_*
\xrightarrow{\quad \quad}
(\Lambda^m T_X, \Lambda^m g)
\xrightarrow{\quad d\phi_* \quad} (\phi^{*} K_Y^{\vee}, G) \xrightarrow{\quad \quad} 0
\end{equation*}
on a neighborhood of $p$, 
we obtain that  
\begin{align}\label{eq-4}
\sum_{k=1}^{m}\Rur{g}{v}{e_k}&=  \big \langle \sqrt{-1}\Theta_{\Lambda^m g}(v, \bar v)
(e_1\wedge e_2 \wedge \dots \wedge e_m), e_1\wedge e_2 \wedge \dots \wedge e_m 
\big \rangle_{\Lambda^m g}  \\ \notag
&\leq \big \langle \sqrt{-1}\Theta_{G}(v, \bar v)
(e_1\wedge e_2 \wedge \dots \wedge e_m), e_1\wedge e_2 \wedge \dots \wedge e_m 
\big \rangle_{G}
\\
&=  \sqrt{-1}\Theta_{G}(v, \bar v)  |e_1\wedge e_2 \wedge \dots \wedge e_m|^2_{G} 
\notag \\&= \sqrt{-1}\Theta_{G}(v, \bar v) \notag
\end{align}
for any tangent vector $v \in T_X$.  
Note that $G$ (defined by the dual metric of $H$) is equal to 
the quotient metric induced by $\Lambda^m g$ since $p$ is a smooth point of $\phi$.
When we consider the above formula in the case of $v=e_{i}$, 
we can see that the left hand side is non-negative by the second statement in Claim \ref{claim-semi}. 
Therefore we can conclude that 
$\sqrt{-1}\Theta_{G}(e_i, \bar e_i)$ is non-negative
for any $i=1,2,\dots, m$. 
\end{proof}
\begin{rem}\label{rem-semi}
At the end of the proof, we can conclude 
that all the tangent vectors in the horizontal direction are actually truly flat and that 
the curvature $\sqrt{-1}\Theta_{G}$ is flat, 
but a further argument is needed for these conclusions. 
\end{rem}
\end{step}

\begin{step}[Positivity of scalar curvature and its integration]\label{step3}
In this step, we will consider the scalar curvature of $g$ and its integration, 
based on the idea in \cite{HW15}. 
Let $\omega$ be the K\"ahler form associated to the K\"ahler metric $g$. 
The first Chern class of $\pi^*\phi^{*} K^\vee_Y$ can be represented 
by the curvature current $\pi^{*} \sqrt{-1}\Theta_{G}/2\pi$. 
Hence, by taking the wedge product of the equality in Claim \ref{claim-div} 
with $ \pi^* \omega^{n-1}$ and by considering the integration over $\bar X$, 
we obtain 
\begin{align}\label{eq-5}
2 \pi \int_{\bar X} c_1(\pi^*\phi^{*} K^\vee_Y) \wedge \pi^* \omega^{n-1}
=\int_{\bar X} \gamma \wedge \pi^* \omega^{n-1} + \int_E \pi^* \omega^{n-1}. 
\end{align}
The purpose of this step is to prove that the first term of the right hand side is non-negative. 
If it is proven, all the terms can be shown to be  zero. 
Indeed, the left hand side is non-positive since $K_Y$ is pseudo-effective by the assumption and 
the second term of the right hand side is non-negative. 
We will show that this observation leads to a certain flatness of $K_Y$ and the smoothness of $\phi$ in Step \ref{step4}.

We first decompose the first term into the vertical part and the horizontal part. 
The integration of  $\gamma \wedge \pi^* \omega^{n-1}$ on $\bar X$ is equal to the integration on a Zariski open set
since $\gamma$ and $\pi^* \omega$ are smooth differential forms. 
Further $\bar X \setminus \pi^{-1}(Z)$ is isomorphic to  $X \setminus Z$ by the morphism $\pi$ 
and the equality $\gamma=\pi^{*}\sqrt{-1}\Theta_G$ holds on  the Zariski open set $\bar X \setminus \pi^{-1}(Z) $ 
(cf. Remark \ref{rem-div}). 
Therefore we can obtain that  
\begin{align*}
\int_{\bar X} \gamma \wedge \pi^* \omega^{n-1}&=
\int_{\bar X \setminus \pi^{-1}(Z)} \gamma \wedge \pi^* \omega^{n-1}\\
&=\int_{\bar X \setminus \pi^{-1}(Z)} \pi^{*}(\sqrt{-1}\Theta_G \wedge  \omega^{n-1})\\
&=\int_{X \setminus Z} \sqrt{-1}\Theta_G \wedge  \omega^{n-1}\\
&=\int_{X_0} \sqrt{-1}\Theta_G \wedge  \omega^{n-1}. 
\end{align*}
Here $X_0$ is  the Zariski open set defined by $X_0:=\phi^{-1}(Y_0)$ and 
$Y_{0}$ is the maximal Zariski open set of $Y$ such that 
the restriction $\phi : X_{0}=\phi^{-1}(Y_{0}) \to Y_{0}$ is a smooth morphism over $Y_0$.

On the other hand,  for a given point $p \in X_{0}$, 
we take an orthonormal basis $\{e_{k}\}_{k=1}^n$ 
of $T_X$ at $p$ satisfying the properties in Claim \ref{claim-semi}. 
Then we have  $\omega=  (\sqrt{-1}/2)\sum_{k=1}^{n}e_k^{\vee} \wedge \bar e_k^{\vee}$ at $p$, 
and thus we obtain  
\begin{align}\label{eq-7}
&\frac{n}{2}\int_{X_0} \sqrt{-1}\Theta_G \wedge  \omega^{n-1}\\
=&\int_{X_0} \sum_{i=1}^{m} \sqrt{-1}\Theta_{G}(e_i, \bar e_i) \, \omega^{n} +
\int_{X_0} \sum_{j=m+1}^{n} \sqrt{-1}\Theta_{G}(e_j, \bar e_j)\, \omega^{n} \notag
\end{align}
from straightforward computations of the scalar curvature. 
The integrand of the first term (which measures positivity of the scalar curvature in the horizontal direction) is non-negative by Claim \ref{claim-semi}. 
We will show that the second term (that is, the vertical part) 
is also non-negative 
by using Stokes's theorem and Fubini's theorem (see Claim \ref{claim-key}). 
Note that the integrand of the second term can be shown to be non-negative 
later (cf. Remark \ref{rem-semi}).  
However it seems to be quite difficult to directly check this fact. 
For this reason, we need to handle  the integration instead of the integrand.

\begin{claim}\label{claim-key}
Under the above situation, the second term is non-negative, namely, we have 
$$\int_{X_0} \sum_{j=m+1}^{n} \sqrt{-1}\Theta_{G}(e_j, \bar e_j)\, \omega^{n} \geq 0.
$$
\end{claim}

\begin{proof}[Proof of Claim \ref{claim-key}]
Let $\omega_Y$ be a K\"ahler form on $Y$. 
Then, for a given local coordinate $(t_1, t_2, \dots, t_m)$ of $Y_0$, 
there exists a smooth positive function $f$ defined on an open set in $Y_0$ such that 
$$
\omega^n= \frac{1}{\phi^{*}f \cdot |\phi^* dt|^2_{\Lambda^m h}} \, 
\phi^{*} \omega_Y^{m}  \wedge \omega^{n-m}, 
$$ 
where $dt:=dt_{1}\wedge dt_2 \wedge \cdots \wedge dt_m$. 
We remark that $\phi^{*}f$ and $|\phi^* dt|^2_{\Lambda^m h}$ depend on the choice of local coordinates, 
but the product is independent of the coordinates and it is globally defined on $X_0$. 
Indeed, it can be seen that    
\begin{align*}
\langle \phi^* dt_{\ell}, e_j^{\vee}  \rangle_h =\langle \phi^*dt_{\ell}, e_j  \rangle_{\rm{pairing}}=
\langle dt_{\ell}, \phi_*e_j  \rangle_{\rm{pairing}}=0 
\end{align*} 
for any $j=m+1,\dots, n$ since $e_j$ is in the kernel of $d\phi_*$.  
Therefore we obtain 
\begin{align*}
\phi^* dt_\ell&=\sum_{k=1}^{n}\langle \phi^* dt_{\ell}, e_k^{\vee}  \rangle_h \, e_{k}^{\vee}
=\sum_{i=1}^{m}\langle \phi^* dt_{\ell}, e_i^{\vee}  \rangle_h \, e_{i}^{\vee}. 
\end{align*}
Further we obtain \begin{align*}
\phi^* dt &= \det [\langle \phi^{*}dt_{\ell}, e_i^{\vee}  \rangle_h]\, e_1^\vee \wedge e_2^\vee \wedge \cdots  \wedge e_m^\vee 
\text{ \quad and \quad}|\phi^* dt|^2_{\Lambda^m h}=\big| \det [\langle \phi^* dt_{\ell}, e_i^{\vee}  \rangle_h  ] \big|^2
\end{align*}
by straightforward computations. 
On the other hand, the K\"ahler form $\omega_Y$ can be locally written as 
$$
\omega_Y=\sqrt{-1}\sum_{i,j=1}^{m} f_{ij} dt_i \wedge d\bar t_{j} 
$$
in terms of the given local coordinate $(t_1, t_2, \dots, t_m)$. 
From this local expression, we can easily show that 
\begin{align*}
\phi^{*} \omega_Y^m \wedge \omega^{n-m} &=c_{n, m}\, \phi^{*}(\det[f_{ij}])\, \phi^{*}(dt \wedge \bar dt)\wedge \omega^{n-m}\\
&=d_{n, m}\, \phi^{*}(\det[f_{ij}])\, \big| \det [\langle \phi^{*}dt_{\ell}, e_i^{\vee}  \rangle_h  ] \big|^2 \,\omega^{n}, 
\end{align*}
where $c_{n,m}$ and $d_{n,m}$ are the universal constants depending only on $n$ and $m$. 
Therefore it can be seen that $f:=d_{n,m} \det[f_{ij}]$ satisfies the desired equality.

By Fubini's theorem, 
we have 
\begin{align*}
\int_{X_0} \sum_{j=m+1}^{n} \sqrt{-1}\Theta_{G}(e_j, \bar e_j)\, \omega^{n}&=
\int_{Y_0} \frac{1}{f} \,  \omega_Y^m \, \int_{X_y}  \frac{1}{|\phi^* dt|^2_{\Lambda^m h}}
\sum_{j=m+1}^{n}  \sqrt{-1}\Theta_{G}(e_j, \bar e_j) \, \omega^{n-m}\\
&=\frac{n-m}{2}\int_{Y_0} \frac{1}{f} \,  \omega_Y^m \,  \int_{X_y}  \frac{1}{|\phi^* dt|^2_{\Lambda^m h}}
\sqrt{-1}\Theta_{G} \wedge \omega^{n-m-1},  
\end{align*}
where $X_y$ is the fiber of $\phi$ at $y \in Y_0$. 
Here we used the equality 
$$
\sum_{j=m+1}^{n}  \sqrt{-1}\Theta_{G}(e_j, \bar e_j) \, \omega^{n-m}
= \frac{n-m}{2}
\sqrt{-1}\Theta_{G} \wedge \omega^{n-m-1}  
$$
of the scalar curvature on the fiber $X_y$. 
We finally prove that the fiber integral in the above equality is non-negative. 
For simplicity, we put $F:=|\phi^* dt|^2_{\Lambda^m h}$. 
Then, by the definition of the curvature $\sqrt{-1}\Theta_{G}$, 
we can show that 
\begin{align*}
&\int_{X_y}  \frac{1}{|\phi^* dt|^2_{\Lambda^m h}}\sqrt{-1}\Theta_{G} \wedge \omega^{n-m-1}\\
=&
\int_{X_y}  \frac{1}{F} \deldel \log F 
\wedge \omega^{n-m-1}\\
=&\sqrt{-1} \int_{X_y}  \partial \Big( 
\frac{1}{F}\, \overline{\partial} \log F \wedge \omega^{n-m-1} \Big)
-\sqrt{-1} \int_{X_y}  \partial \Big( \frac{1}{F} \Big) \wedge \overline{\partial} 
\log F \wedge \omega^{n-m-1}\\
=& 
\int_{X_y} \frac{1}{F^3}  \sqrt{-1}\partial F \wedge \overline{\partial} F \wedge \omega^{n-m-1}.  
\end{align*}
The last equality follows from Stokes's theorem. 
The integrand of the right hand side is non-negative, 
and thus the desired inequality can be obtained. 
\end{proof}
\end{step}

\begin{step}[Curvature of the canonical bundle $K_Y$]\label{step4}
In this step, from the assumption that $K_Y$ is pseudo-effective,  
we will show that the curvature $\sqrt{-1}\Theta_G$ is flat and 
$\phi$ is a smooth morphism. 
The key point here is 
the observation on the flatness of curvature in the horizontal direction 
obtained from Claim \ref{claim-semi} and Claim \ref{claim-key}.

\begin{claim}\label{claim-flat}
The following statements hold\,$:$
\vspace{0.1cm} \\
\quad $\bullet$ The canonical bundle $K_Y$ is numerically zero $($that is, $c_{1}(K_Y)=0$$)$.
\vspace{0.1cm} \\
\quad $\bullet$  $H_g([e_i])=\Rur{g}{e_i}{e_i}=0$ for any $i=1,2,\dots, m$. 
\vspace{0.1cm} \\
\quad $\bullet$  $\Rur{g}{v}{e_i}\geq 0$ for any tangent vector $v \in T_X$ and any $i=1,2,\dots, m$. 
\vspace{0.1cm} \\
\quad $\bullet$ The curvature $\sqrt{-1}\Theta_G$ is flat on $X$. 
In particular, the effective divisor $E$ is actually the zero divisor, 
and thus the morphism $\phi$ is smooth. 
\end{claim}
\begin{proof}[Proof of Claim \ref{claim-flat}]
The left hand side of the equality (\ref{eq-5}) is non-positive 
since $\pi^*\phi^{*} K_Y$ is pseudo-effective by the assumption, 
and further each term of the right hand side is non-negative 
by Claim \ref{claim-key}. 
Hence we obtain 
\begin{align*}
\int_{ X} c_1(\phi^{*} K^\vee_Y) \wedge \omega^{n-1}&=\int_{\bar X}  c_1(\pi^*\phi^{*} K^\vee_Y) \wedge \pi^* \omega^{n-1} =0. 
\end{align*}
In general,  
if a pseudo-effective line bundle $L$ satisfies $c_1(L) \cdot \{\omega^{n-1} \}=0$ for some K\"ahler form $\omega$, 
then $L$ is numerically zero (for example see \cite{Mat13}). 
Indeed, for an arbitrary $d$-closed $(n-1, n-1)$ form $\eta$, 
we can take a positive constant $C$ such that 
$$
\frac{1}{C}\, \omega^{n-1} \leq \eta \leq C \omega^{n-1}.  
$$ 
Then we obtain $c_1(L) \cdot \{\eta\}=0$ by the assumption $c_1(L) \cdot \{\omega^{n-1} \}=0$. 
This leads to $c_{1}(L)=0$ by the duality. 
Therefore it can be seen that $\phi^{*} K_Y$ is numerically zero.

On the other hand, by the equalities (\ref{eq-5}) and (\ref{eq-7}), 
we have  
$$
\int_{X_0} \sum_{i=1}^{m} \sqrt{-1}\Theta_{G}(e_i, \bar e_i) \, \omega^{n}=0. 
$$
It follows that $\sqrt{-1}\Theta_{G}(e_i, \bar e_i)=0$ 
for any $i=1,2,\dots, m$ at a  point $p \in X_0$
since  the integrand $\sqrt{-1}\Theta_{G}(e_i, \bar e_i) $ is 
non-negative by Claim \ref{claim-semi}.
By applying the formula (\ref{eq-4}) to the case of $v=e_i$, 
we obtain 
\begin{align*}
0 \leq \sum_{k=1}^{m}\Rur{g}{e_i}{e_k} \leq  \sqrt{-1}\Theta_{G}(e_i, \bar e_i)=0. 
\end{align*}
The left inequality follows from Claim \ref{claim-semi}. 
In particular, we can see that 
$$ \text{
$H_g([e_i])=\Rur{g}{e_i}{e_i}=0$ for any $i=1,2,\dots, m$. 
}
$$ 
This implies that $e_i$ is the minimizer of 
the semi-positive holomorphic sectional curvature $H_g$ on $T_X$, 
and thus it can be shown that $\Rur{g}{v}{e_i}$ is non-negative 
for any tangent vector $v \in T_X$
by Lemma \ref{lem-ineq}. 
By applying the formula (\ref{eq-4}) to an arbitrary tangent vector $v \in T_X$ again, 
we  obtain 
\begin{align*}
0 \leq \sum_{i=1}^{m}\Rur{g}{v}{e_i} \leq \sqrt{-1}\Theta_{G}(v, \bar v). 
\end{align*}

This means that the curvature $\sqrt{-1}\Theta_{G}$ is semi-positive on $X_0$. 
We can see that $\gamma \geq 0$ holds on the Zariski open set $\pi^{-1}(X_0)$, 
since $\gamma=\pi^{*}\sqrt{-1}\Theta_{G}$ holds on $\bar X \setminus \pi^{-1}(Z)$ 
and we have $X_0 \subset X \setminus Z$. 
Hence it follows that $\gamma \geq 0 $ on the ambient space $\bar X$ since $\gamma $ is a smooth form. 
By the above arguments, the first Chern class $c_{1}(\pi^* \phi^* K_Y^\vee)$ 
(which is numerically zero) is represented by 
the sum of the semi-positive form $\gamma$ and the positive current $[E]$. 
Therefore we can conclude that $\gamma=0$ and $E=0$ (namely, $\sqrt{-1}\Theta_{G}=0$). 
In particular, we can see that $G$ is a smooth metric (that is, $\phi $ is a smooth morphism).
\end{proof}
\end{step}

\begin{step}[Truly flatness in the horizontal direction and its applications]\label{step5}
In this step, we first show that the statement (2)  in Theorem \ref{thm-main} holds and 
all the tangent vectors in the horizontal direction are truly flat. 
We will prove the statement (3) as an application of the truly flatness. 
Further we finally obtain the statement (4) from the theory of foliations.

Now we have the exact sequence of vector bundles 
\begin{equation}\label{eq-restan}
0 \xrightarrow{\quad \quad} (T_{X/Y}, g_S) \xrightarrow{\quad \quad}
(T_X, g) \xrightarrow{\quad d\phi_* \quad}
(\phi^{*} T_Y, g_Q)
\xrightarrow{\quad \quad} 0
\end{equation} 
on the ambient space $X$ since $\phi$ is a smooth morphism by Claim \ref{claim-flat}.  
Let $g_Q$ (resp. $g_S$) be the induced hermitian metric on $\phi^{*} T_Y$ (resp. $T_{X/Y}$). 
Then we prove the following claim\,$:$

\begin{claim}\label{claim-truly}
The following statements hold\,$:$
\vspace{0.1cm}\\
\quad $\bullet$ $e_{i}$ is a truly flat tangent vector for any $i=1,2,\dots, m$. 
\vspace{0.1cm}\\
\quad $\bullet$ The exact sequence $($\ref{eq-restan}$)$ splits, 
and its splitting $T_X= T_{X/Y} \oplus \phi^{*} T_Y$ coincides with the orthogonal decomposition of $(T_X, g)$.  
\vspace{0.1cm}\\
\quad $\bullet$ There exists a hermitian metric $g_Y$ on $T_Y$ 
such that $g_Q=\phi^{*} g_Y$ and $H_{g_Y}\equiv 0$. 
In particular, the image $Y$ admits a finite \'etale cover $T \to Y$ by a complex torus $T$. 
\end{claim}

\begin{proof}[Proof of Claim \ref{claim-truly}]
For any $i=1,2,\dots, m$, the tangent vector $e_i$ is the minimizer of the holomorphic sectional curvature $H_g$, 
and further $\langle \sqrt{-1}\Theta_{g}(v, \bar v)(e_i), e_i \rangle_g $ is non-negative 
for any tangent vector $v \in T_X$ by Claim \ref{claim-flat}.  
By the formula (\ref{eq-1}), we obtain that 
\begin{align}\label{eq-6}
0 \leq \big \langle \sqrt{-1}\Theta_{g}(v, \bar v)(e_i), e_i \big \rangle_g + 
\big \langle B_{\bar v} (e_i), B_{\bar v} (e_i) \big \rangle_{g_S}
= \big \langle \sqrt{-1}\Theta_{g_Q}(v, \bar v)(e_i), e_i \big \rangle_{g_Q}
\end{align}
for a tangent vector $v \in T_X$. 

On the other hand, the induced metric $\det g_Q$ on $\phi^{*}K_Y^{\vee}=\det \phi^{*} T_Y$ 
coincides with the metric $G$ constructed in Step \ref{step1} 
and the curvature of $\det g_Q=G$ is flat by Claim \ref{claim-flat}. 
Therefore we obtain that  
\begin{align*}
&\sum_{i=1}^{m}\big \langle \sqrt{-1}\Theta_{g_Q}(v, \bar v)(e_i), e_i \big \rangle_{g_Q}\\
=&\big \langle \sqrt{-1}\Theta_{\det g_Q}(v, \bar v)
(e_1\wedge e_2 \wedge \cdots \wedge e_m ), (e_1\wedge e_2 \wedge \cdots \wedge e_m)
\big \rangle_{\det g_Q} \\
=&0
\end{align*}
by the equality (\ref{eq-8}). 
By combining with the inequality (\ref{eq-6}), 
we can obtain that 
\begin{align*}
\Rur{g}{v}{e_i} = \big \langle \sqrt{-1}\Theta_{g}(v, \bar v)(e_i), e_i \big \rangle_g =0 
\quad \text{ and } \quad  
\big \langle B_{\bar v} (e_i), B_{\bar v} (e_i) \big \rangle_{g_S} =0
\end{align*}
for any tangent vector $v \in T_X$ and $i=1,2,\dots, m$. 
Here we used the fact that 
$\langle B_{\bar v} (e_i), B_{\bar v} (e_i)  \rangle_{g_S}$ is non-negative. 
Then, by Lemma \ref{lemm-flat}, we can see that $e_i$ is truly flat 
since $e_i$ satisfies that $\Rur{g}{v}{e_i}=0 $ and $H_g([e_i])=0$. 
Further it follows that 
$B\in C^{\infty}(X, \Lambda^{0,1}\otimes \Hom(S^{\bot}, S))$ is identically zero 
since $\langle B_{\bar v} (\bullet), B_{\bar v} (\bullet)  \rangle_{g_S}$ 
is a semi-positive definite quadratic form on $S^{\bot}$ and 
its trace 
$\sum_{i=1}^{m}\langle B_{\bar v} (e_i), B_{\bar v} (e_i)  \rangle_{g_S}$ is zero 
by the above argument. 
Hence we obtain the holomorphic orthogonal decomposition $T_X= T_{X/Y} \oplus \phi^{*} T_Y$ (see subsection \ref{subsec-2-1}). 

Now we prove the last statement. 
For a (local) vector field $v $ of $ T_{Y}$ defined on an open set $U$ in $Y$, 
we consider 
the section $\phi^{*}v \in H^{0}(\phi^{-1}(U), \phi^{*}T_Y)$ 
defined by 
$$
\phi^{-1}(U) \ni p \to v(\phi(p)) \in T_{Y, \phi(p)}={(\phi^{*}T_Y)}_p,
$$ 
which we will denote by the notation $\phi^{*}v$. 
If the function $|\phi^{*} v|_{g_Q}$ is a constant on the fiber $X_y$, 
we can define the hermitian metric $g_Y$ of $Y$ by $|v|_{g_Y}:=|\phi^{*} v|_{g_Q}$. 
Then we have $g=\phi^{*} g_Y$ by the definition. 

If we can show that the restriction of $\deldel \log |\phi^{*} v|_{g_Q} $ 
to the fiber is a semi-positive $(1,1)$-form, 
it should be a constant by the maximal principle, 
since $|\phi^{*} v|_{g_Q}$ is a psh function globally defined on the compact fiber. 
For this purpose, we consider the sub-line bundle $L$ of $\phi^{*}T_Y$
$$
(L:=\Spn \langle \phi^{*} v \rangle, g_L) \subset (\phi^{*}T_Y, g_Q)
$$
spanned by $\phi^{*} v$. 
Let $g_L$ be the induced metric on $L$. 
By the definition of the curvature and the induced metric, 
we obtain 
$$
\sqrt{-1}\Theta_{g_L}: = \sqrt{-1}\Theta_{g_L}(L)=-\deldel \log |\phi^{*} v|^2_{g_L}=-\deldel \log |\phi^{*} v|^2_{g_Q}. 
$$
By applying the formula (\ref{eq-2a}) to the above injective bundle morphism, we obtain that  
$$
\sqrt{-1}\Theta_{g_L}(w, \bar w) |\phi^{*} v|^2_{g_L}
=\big \langle \sqrt{-1}\Theta_{g_L}(w, \bar w)(\phi^{*} v), \phi^{*} v \big \rangle_{g_L}
\leq \big \langle \sqrt{-1}\Theta_{g_Q}(w, \bar w)(\phi^{*} v), \phi^{*} v \big \rangle_{g_Q}
$$
for a tangent vector $w \in T_X$. 
We have already shown that the tangent vectors $\{e_{i}\}_{i=1}^{m}$ are truly flat by the above argument. 
The vector $\phi^{*} v$ can be written as a linear combination of $\{e_{i}\}_{i=1}^{m}$, 
and thus it is also truly flat. 
On the other hand, by the holomorphic orthogonal decomposition $T_X=T_{X/Y}\oplus \phi^{*} T_Y$, 
the section $\phi^{*} v$ of $\phi^{*} T_Y$ determines the section of $T_X$, 
which we denote by the same notation $\phi^{*} v$. 
Then we obtain 
$$
\big \langle \sqrt{-1}\Theta_{g_Q}(w, \bar w)(\phi^{*} v), \phi^{*} v \big \rangle_{g_Q}
=
\big \langle \sqrt{-1}\Theta_{g}(w, \bar w)(\phi^{*} v), \phi^{*} v \big \rangle_{g}  
=
0. 
$$
The right equality follows from the truly flatness of $\phi^{*} v$. 
Therefore we can see that $\sqrt{-1}\Theta_{g_L}$ is semi-negative (in particular $|\phi^{*} v|_{g_Q}$ is a constant).

We finally check that the holomorphic sectional curvature of $g_Y$ is identically zero. 
Note that, in general, a compact K\"ahler manifold is a complex torus up to finite \'etale covers  
when the holomorphic sectional curvature is identically zero 
(see \cite{Igu54}, \cite[Proposition 2.2]{HLW16}, \cite{Ber66}, \cite{Igu54}).
For a given tangent vector $v \in T_Y$, 
the vector $\phi^* v$ satisfies $d\phi_* (\phi^* v)=v$. 
Hence we can easily see that  
\begin{align*}
0=\big \langle \sqrt{-1}\Theta_{g}(\phi^* v, \bar \phi^* v)(\phi^{*} v), 
\phi^{*} v \big \rangle_{g}  
&=\big \langle \sqrt{-1}\Theta_{g_Q}(\phi^* v, \bar \phi^* v)(\phi^{*} v), \phi^{*} v \big \rangle_{g_Q}\\ 
&=\big \langle \sqrt{-1}\Theta_{g_Y}(v, \bar v)(v), v \big \rangle_{g_Y}
\end{align*}
by $d\phi_* (\phi^* v)=v$, $g_Q=\phi^{*}g_Y$, and the truly flatness of $\phi^{*} v$. 
\end{proof}

We check the statement (4) in Theorem \ref{thm-main}. 
When $X$ is projective, 
it can be shown that the morphism $\phi: X \to Y$ is a (holomorphic) fiber bundle 
(in particular, all the fibers are isomorphic)
by the classical Ehresmann theorem and \cite[Lemma 3.19]{Hor07}. 
\end{step}
We finish the proof of Theorem \ref{thm-main}. 
\end{proof}

\begin{rem}\label{rem-main}
If  the foliation $\phi^*T_Y \subset T_X$ obtained from Theorem \ref{thm-main} is integrable 
(that is, it is closed under the Lie bracket), 
then it can be shown that $\phi$ is locally trivial and 
we have the decomposition 
$$
X_{\rm{univ}} \cong Y_{\rm{univ}} \times F_{\rm{univ}}=\mathbb{C}^m \times F_{\rm{univ}}
$$
by the Ehresmann theorem (for example see \cite[Theorem 3.17]{Hor07}). 
The integrability of $\phi^*T_Y \subset T_X$ is satisfied 
when the dimension of $Y$ is one. 
In this case, the image $Y$ is automatically an elliptic curve by the statement (3) in Theorem \ref{thm-main}. 
See \cite{Hor07} and references therein for more details. 
\end{rem}

\subsection{Proof of Theorem \ref{thm-mainn}}\label{subsec-3-2}
In this subsection, we will prove Theorem \ref{thm-mainn} 
by modifying the arguments in the proof of Theorem \ref{thm-main} for almost holomorphic maps. 

\begin{theo}[=Theorem \ref{thm-mainn}]\label{r-thm-mainn}
Let $(X, g)$ be a compact K\"ahler manifold 
with semi-positive holomorphic sectional curvature and  
$\phi: X \dashrightarrow Y$ be an almost holomorphic map to 
a compact K\"ahler manifold $Y$ with pseudo-effective canonical bundle.
Let $X_1$ and $Y_1$ be Zariski open sets such that 
$\phi :X_1:=\phi^{-1}(Y_{1}) \to Y_1$ is a morphism. 
Then we have the followings\,$:$
\begin{itemize}
\item[(0)] The numerical dimension $\nu(Y)$ of $Y$ is equal to zero. 
\vspace{0.1cm}
\item[(1)] $\phi$ is a smooth morphism on $X_1$. 
\vspace{0.1cm}
\item[(2)] The standard exact sequence of vector bundles on $X_1$ 
$$
0 \xrightarrow{\quad \quad} T_{{X_1}/{Y_1}}:=\Ker d\phi_* \xrightarrow{\quad \quad}  
T_{X_1} \xrightarrow{\quad d\phi_* \quad}
\phi^{*} T_{Y_1}
\xrightarrow{\quad \quad} 0
$$ 
gives the holomorphic orthogonal decomposition  
$$
T_{X_1} = T_{X_1/Y_1} \oplus \phi^{*} T_{Y_1}. 
$$
Moreover, all the tangent vectors in $\phi^{*} T_{Y_1} \subset T_{X_1}$ are truly flat. 
\vspace{0.1cm}
\item[(3)] Let $g_Q$ be the hermitian metric on $\phi^{*} T_{Y_1}$ induced by 
the above exact sequence and the given metric $g$. 
Then there exists a hermitian metric  $g_Y$ on $T_{Y_1}$ with the following properties\,$:$ 
\vspace{0.1cm}
\begin{itemize}
\item[$\bullet$] $g_Q$ is obtained from the pull-back of $g_Y$ $($namely, $g_Q=\phi^* g_Y$$)$. 
\vspace{0.1cm}
\item[$\bullet$] The holomorphic sectional curvature of $(Y_1, g_Y)$ is identically zero. 
\end{itemize}
\end{itemize}
\end{theo}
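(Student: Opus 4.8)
The plan is to adapt the proof of Theorem~\ref{r-thm-main}, carried out over Zariski open subsets. First I would take a modification $\pi\colon\overline{X}\to X$, which is an isomorphism over $X_1$, such that $\overline{\phi}:=\phi\circ\pi\colon\overline{X}\to Y$ is a morphism; after a further blow-up I may also assume that $\pi$ resolves the degenerate ideal of the induced sheaf morphism, as in Step~\ref{step1}. The pull-back $\pi^{*}\omega$ of the K\"ahler form is then a smooth, $d$-closed, semi-positive $(1,1)$-form which is strictly positive outside the exceptional locus $\operatorname{Exc}(\pi)$, so its class is nef but in general not K\"ahler. The key point is that the local curvature computations of Step~\ref{step2} and Step~\ref{step3} only involve the metric $g$ near points of $X_0:=\phi^{-1}(Y_0)$, the locus over which $\phi\colon X_1\to Y_1$ is already a smooth morphism, and that the general fibers of $\phi$ are compact and contained in $X_1$. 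Hence Claim~\ref{claim-semi} (positivity of the scalar curvature in the horizontal direction), the construction of the induced singular hermitian metric $G$ on $\overline{\phi}^{*}K_Y^{\vee}$, Claim~\ref{claim-div} (the decomposition $\pi^{*}\sqrt{-1}\Theta_{G}=\gamma+[E]$ with $\gamma$ smooth and $E$ effective), and Claim~\ref{claim-key} (non-negativity of the vertical part, via Stokes's and Fubini's theorems over the compact fibers) carry over, with only minor changes, to $\overline{X}$ over the preimage of $X_0$.

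Next I would run the global integration of Step~\ref{step3}--Step~\ref{step4} on $\overline{X}$, using the class $\{(\pi^{*}\omega)^{n-1}\}$. Wedging $\pi^{*}\sqrt{-1}\Theta_{G}=\gamma+[E]$ with $(\pi^{*}\omega)^{n-1}$ and integrating over $\overline{X}$, the left-hand side is, up to a positive constant, $c_1(\overline{\phi}^{*}K_Y^{\vee})\cdot\{(\pi^{*}\omega)^{n-1}\}$; since $K_Y$ (hence $\overline{\phi}^{*}K_Y$) is pseudo-effective and $(\pi^{*}\omega)^{n-1}$ is semi-positive, this pairing is $\le 0$. The right-hand side is $\int_{\overline{X}}\gamma\wedge(\pi^{*}\omega)^{n-1}+\int_{E}(\pi^{*}\omega)^{n-1}$; the second summand is non-negative, and the first equals $\int_{X_0}\sqrt{-1}\Theta_{G}\wedge\omega^{n-1}$, which splits into a horizontal and a vertical scalar-curvature contribution, both non-negative by the transferred Claims~\ref{claim-semi} and~\ref{claim-key}. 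Therefore all of these terms vanish. From $\gamma\ge 0$ on the dense open set where $\pi^{*}\omega>0$ together with $\int_{\overline{X}}\gamma\wedge(\pi^{*}\omega)^{n-1}=0$ one deduces $\gamma\equiv 0$ on $\overline{X}$, and from $\int_{E}(\pi^{*}\omega)^{n-1}=0$ that the support of $E$ is contained in $\operatorname{Exc}(\pi)$; thus the induced metric on $\overline{\phi}^{*}K_Y$ has curvature current non-positive modulo an effective $\pi$-exceptional divisor. Combined with the pseudo-effectivity of $\overline{\phi}^{*}K_Y$ and the divisorial Zariski decomposition, this forces $\nu(\overline{\phi}^{*}K_Y)=0$, and therefore $\nu(Y)=\nu(K_Y)\le\nu(\overline{\phi}^{*}K_Y)=0$: this is statement~$(0)$.

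Over $X_1$, where $\pi$ is an isomorphism, both $\gamma$ and $E$ vanish, so $G$ is a smooth flat metric on $\phi^{*}K_{Y_1}^{\vee}$; in particular $\phi$ is a smooth morphism on $X_1$, which is statement~$(1)$. Statements~$(2)$ and~$(3)$ then follow by replaying Step~\ref{step5} over $X_1$ with no essential change: from the genuine exact sequence $0\to T_{X_1/Y_1}\to T_{X_1}\xrightarrow{d\phi_*}\phi^{*}T_{Y_1}\to 0$, whose quotient metric $g_Q$ has flat determinant $G$, together with the pointwise identities from the local part (each $e_i$ a minimizer of $H_g$ on $T_X$ with $\Rur{g}{e_i}{e_i}=0$), Lemma~\ref{lemm-flat} shows that the $e_i$ are truly flat; the formula~$(\ref{eq-1})$ then forces the second fundamental form to vanish, giving the holomorphic orthogonal splitting $T_{X_1}=T_{X_1/Y_1}\oplus\phi^{*}T_{Y_1}$; and the sub-line-bundle argument of Claim~\ref{claim-truly} produces the hermitian metric $g_Y$ on $T_{Y_1}$ with $g_Q=\phi^{*}g_Y$ and $H_{g_Y}\equiv 0$.

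I expect the main obstacle to be the global step on the modification $\overline{X}$. The induced singular hermitian metric must be set up so that the decomposition of Claim~\ref{claim-div} and the integration by parts of Claim~\ref{claim-key} remain valid near $\operatorname{Exc}(\pi)$ and the indeterminacy locus of $\phi$, where the pull-back $\pi^{*}g$ is only a degenerate pseudo-metric; in particular the vertical fiber-integral argument has to be controlled when the fibers of $\overline{\phi}$ meet $\operatorname{Exc}(\pi)$, and the delicate bookkeeping there is what confines the conclusion for $Y$ to the numerical-dimension statement $\nu(Y)=0$, rather than the stronger $c_1(K_Y)=0$ of Theorem~\ref{r-thm-main}. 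By contrast, the local curvature analysis takes place over $X_0\subset X_1$, is insensitive to the compactness of the total space, and transfers essentially verbatim.
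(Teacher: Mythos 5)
Your proposal takes a genuinely different route from the paper, and the difference is not cosmetic. You resolve the indeterminacy and work \emph{upstairs} on the modification $\overline{X}$ with the line bundle $\overline{\phi}^{*}K_Y$ and the degenerate form $\pi^{*}\omega$; the paper instead resolves the indeterminacy with $\tau\colon\Gamma\to X$, uses $\tau_{*}\Lambda^{m}\Omega_{\Gamma}=\Lambda^{m}\Omega_{X}$ to push $\overline{\phi}^{*}K_Y$ down to a genuine line bundle $L:=\tau_{*}\overline{\phi}^{*}K_Y$ \emph{on $X$}, and then reruns Steps~\ref{step1}--\ref{step5} verbatim on $X$ with $L$ replacing $\phi^{*}K_Y$ and the original K\"ahler form $\omega$. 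That maneuver is the crux, and staying upstairs costs you exactly where you sense the difficulty.

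Concretely, there are two gaps in your version. First, the integration argument with $(\pi^{*}\omega)^{n-1}$ gives only $c_{1}(\overline{\phi}^{*}K_Y^{\vee})\cdot\{(\pi^{*}\omega)^{n-1}\}=0$; since $\pi^{*}\omega$ is nef but not K\"ahler, the paper's cited lemma (psef $+$ zero pairing with a K\"ahler power implies numerically zero) does not apply, and indeed $\operatorname{Exc}(\pi)$ gives explicit psef classes killed by $(\pi^{*}\omega)^{n-1}$. You therefore conclude only that $\operatorname{supp}E\subset\operatorname{Exc}(\pi)$, which is strictly weaker than the paper's $E=0$, and the appeal to ``divisorial Zariski decomposition'' is not developed enough to recover it. (In fact $E=0$ does follow from your data --- $\gamma=0$ forces $c_{1}(\overline{\phi}^{*}K_Y)=-[E]/2\pi$, and combining this with the pseudo-effectivity of $\overline{\phi}^{*}K_Y$ forces $[E]=0$ in cohomology and hence $E=0$ --- but you do not run this argument, and your closing paragraph mistakenly suggests that only the weaker $\nu(Y)=0$ is attainable by any route, missing that the paper does obtain $c_{1}(L)=0$ and $E=0$.) Second, your claim that $\pi$ can be taken to be an isomorphism over $X_{1}$ is not compatible with also resolving the degenerate ideal: if the non-smooth locus of $\phi$ has a codimension-$\ge 2$ component inside $X_{1}$ (which a priori it might --- this is precisely what needs to be excluded), the log resolution must blow up inside $\pi^{-1}(X_{1})$. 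Your deduction of statement~$(1)$ (``over $X_{1}$, where $\pi$ is an isomorphism, both $\gamma$ and $E$ vanish'') then has no content. The paper instead deduces $(1)$ from $E=0$ on $\overline{X}$, which forces the degenerate ideal $\mathcal{I}$ to be $\mathcal{O}_{X}$, hence the sheaf morphism $f$ is a bundle morphism, hence $\phi$ is a submersion over $Y_{1}$. There is also a smaller under-specification: your singular metric $G$ on $\overline{\phi}^{*}K_Y^{\vee}$ over $\overline{X}$ needs a hermitian metric on $\Lambda^{m}\Omega_{\overline{X}}$, and the only metric inherited from $g$ lives on the sub-bundle $\pi^{*}\Lambda^{m}\Omega_{X}$ and degenerates along $\operatorname{Exc}(\pi)$, so Claim~\ref{claim-div} does not transfer automatically. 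The paper sidesteps all of this by working with $L\hookrightarrow\Lambda^{m}\Omega_{X}$ and the honest metric $\Lambda^{m}h$ on $X$.
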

\begin{proof}[Proof of Theorem \ref{thm-mainn}]
The strategy of the proof is essentially the same as that of Theorem \ref{thm-main}. 
We will only explain how to revise the proof of Theorem \ref{thm-main} to avoid repeating the same arguments. 
We use the same notations as in the proof of Theorem \ref{thm-main}. 

For an almost holomorphic map $\phi: X \dashrightarrow Y$, 
we take a resolution $\tau: \Gamma \to X$ of the indeterminacy locus of $\phi$.  
We denote, by the notation $\bar \phi: \Gamma \to Y$,  
the morphism with the following commutative diagram\,$:$
\begin{equation*}
\xymatrix@C=40pt@R=30pt{
 & \Gamma \ar[d]_\tau \ar[rd]^{\bar{\phi}\ \ }  & \\ 
& X \ar@{-->}[r]^{\phi \ \ \ }  &  Y.\\   
}
\end{equation*}
Then we have the injective sheaf morphism 
$$
\bar \phi^* K_Y \xrightarrow{\quad d \bar \phi^{*} \quad} \Lambda^m \Omega_{\Gamma}. 
$$
By taking the push-forward by the modification $\tau$, 
we  obtain the injective sheaf morphism 
$$
 (L:= \tau_* \bar \phi^* K_Y, H) \xrightarrow{\quad f \quad} 
 (\Lambda^m \Omega_{X}, \Lambda^m h). 
$$
Here we used the formula $\tau_{*}\Lambda^m \Omega_{\Gamma} =\Lambda^m \Omega_{X}$. 
For simplicity, 
we denote the line bundle $\tau_* \bar \phi^* K_Y$
by the notation $L$  and the above sheaf morphism by the notation $f$. 
We can take a (non-empty) Zariski open set $Y_1$ such that 
the restriction $\phi :X_1:=\phi^{-1}(Y_{1}) \to Y_1$ 
is a morphism without the indeterminacy locus 
since $\phi$ is an almost holomorphic map. 
Also, we take the maximal Zariski open set $Y_0 \subset Y_1$ such that 
$\phi :X_0:=\phi^{-1}(Y_{0}) \to Y_0$ is smooth. 
One of our purposes is to prove that $Y_0=Y_1$. 

From now on, we will check that the same arguments as in each step  in the proof of Theorem \ref{thm-main}  work 
by replacing $\phi^{*}K_Y$ with $L$ and $\phi :X \to Y$ with $\phi :X_1 \to Y_1$. 

By the same way as in Step \ref{step1}, 
we can construct a singular hermitian metric $H$ on $L$ and 
its dual metric $G$ on $L^{\vee}$. 
The line bundle $L=\tau_* \bar \phi^* K_Y$ 
coincides with the usual pull-back $\phi^{*}K_Y$ on $X_1$ 
(that is, $L$ can be seen as the extension of the pull-back 
$\phi^{*}K_Y$ defined on $X_1$ to $X$). 
Let $\mathcal{I}$ be the degenerate ideal of $f$ and let $Z'$ be the support of the cokernel $\mathcal{O}_X/\mathcal{I}$. 
We do not know whether $Z'$ coincides with the non-smooth locus $Z$ of $\phi$, 
but we have $Z' \cap X_1=Z \cap X_1$ 
since $f$ is just the morphism $d\phi^*$ of the pull-back on $X_1$.  
For a resolution $\pi: \bar{X} \to X$ of the degenerate ideal $\mathcal{I}$, 
we can easily check the same statements as in Claim \ref{claim-div} 
by replacing $Z$ with $Z'$. 
In particular, we have 
\begin{align*}
2\pi c_1(\pi^{*}L^\vee)
\ni 
\pi^{*}\sqrt{-1}\Theta_G=\gamma + [E], 
\end{align*}
for some smooth  $(1,1)$-form $\gamma$ and 
the integration current $[E]$ of an effective divisor $E$.

In Step \ref{step2}, we only considered tangent vectors at a smooth point of $\phi$. 
Hence there is no difficulty to obtain Claim \ref{claim-semi}.

In Step \ref{step3}, we essentially discussed local problems in $Y_1$. 
Therefore we can obtain the equality (\ref{eq-5}), the equality (\ref{eq-7}), and Claim \ref{claim-key} 
by replacing $\phi^*K_Y$ with $L$.

In Step \ref{step4}, we used the global condition that $\phi^{*}K_Y$ is pseudo-effective. 
However we can see that the line bundle $L$ is pseudo-effective by the definition, 
and thus we can repeat the same argument as in Claim \ref{claim-flat}. 
As a result, we can conclude that $L$ is numerically zero and $\sqrt{-1}\Theta_{G}$ is flat on $X$. 
This implies that $\bar \phi^* K_{Y}$ is numerically equivalent 
to some  exceptional divisor by the definition $L=\tau_* \bar \phi^* K_Y$. 
Hence the numerical dimension of $K_{Y}$ is zero. 
Further the morphism $f$ is an injective bundle morphism 
since the effective divisor $E$ is the zero divisor by $\sqrt{-1}\Theta_{G}$ is flat. 
In particular, since the morphism $f$ is equal to the morphism $d\phi^*$ of the pull-back over $Y_1$, 
the morphism $\phi$ is smooth over $Y_1$ (namely, $Y_1=Y_0$).

The rest arguments  in Step \ref{step5} are local in $Y_1=Y_0$, 
and thus we can easily check the  same conclusions as in Theorem \ref{thm-main} over $Y_{0}$  
by replacing $X$ and $Y$  with $X_0$ and $Y_0$. 
\end{proof}

\subsection{Proof of Theorem \ref{thm-mai} and Corollary \ref{cor-mainnnn}}

In this subsection, we will prove the following theorem. 
Theorem \ref{thm-mai} and Corollary \ref{cor-mainnnn} can be directly obtained from the following theorem. 

\begin{theo}\label{thm-sum}
Let $(X, g)$ be a compact K\"ahler manifold 
with semi-positive holomorphic sectional curvature and  
$Y$ be a compact K\"ahler manifold with pseudo-effective canonical bundle. 
Let $\phi: X \dashrightarrow Y$ be an almost holomorphic map from $X$ to $Y$. 

Then we have 
$$
\dim X - \dim Y \geq n_{{\rm{tf}}}{(X, g)}. 
$$
\end{theo}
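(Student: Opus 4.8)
The plan is to combine Theorem \ref{thm-mainn} (applied to the given almost holomorphic map $\phi: X \dashrightarrow Y$) with a dimension count on the truly flat subspace along a generic fiber. By Theorem \ref{thm-mainn}, on the Zariski open set $X_1 = \phi^{-1}(Y_1)$ over which $\phi$ is a morphism, we obtain a holomorphic orthogonal decomposition $T_{X_1} = T_{X_1/Y_1} \oplus \phi^* T_{Y_1}$ and, crucially, every tangent vector in the horizontal direction $\phi^* T_{Y_1} \subset T_{X_1}$ is truly flat. The idea is that these horizontal truly flat vectors form a subspace of dimension $\dim Y$ inside $V_{\mathrm{flat},p}$ at every point $p \in X_1$, which will immediately give $\dim V_{\mathrm{flat},p} \geq \dim Y$, hence $n_{\mathrm{tf}}(X,g)_p \leq \dim X - \dim Y$ at such points, and then I would pass to the infimum over $X$.

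First I would fix a point $p$ in the Zariski open set $X_1$. At $p$, the fiber of $\phi^* T_{Y_1}$ is an $m$-dimensional subspace of $T_{X,p}$ (where $m = \dim Y$), and by part (2) of Theorem \ref{thm-mainn} every vector in this subspace is truly flat at $p$. Therefore $\dim V_{\mathrm{flat},p} \geq m = \dim Y$, which yields
$$
n_{\mathrm{tf}}(X,g)_p = \dim X - \dim V_{\mathrm{flat},p} \leq \dim X - \dim Y.
$$
Second, since $X_1$ is a non-empty Zariski open subset of $X$, and $n_{\mathrm{tf}}(X,g) = \dim X - \inf_{q \in X} \dim V_{\mathrm{flat},q}$, it suffices to exhibit one point $q$ with $\dim V_{\mathrm{flat},q} \geq \dim Y$; taking $q = p \in X_1$ does exactly this. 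Thus
$$
n_{\mathrm{tf}}(X,g) = \dim X - \inf_{q \in X} \dim V_{\mathrm{flat},q} \leq \dim X - \dim V_{\mathrm{flat},p} \leq \dim X - \dim Y,
$$
which is the desired inequality $\dim X - \dim Y \geq n_{\mathrm{tf}}(X,g)$.

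The substantive content is entirely hidden in Theorem \ref{thm-mainn}, so the present argument is essentially a bookkeeping step; the main (minor) obstacle is just to make sure the horizontal subspace really has the full dimension $\dim Y$ at points of $X_1$ — this follows because $\phi$ is a submersion there (part (1) of Theorem \ref{thm-mainn}), so $d\phi_*$ is surjective and $\phi^* T_{Y_1}$ has rank $m$ — and to confirm that the infimum defining $n_{\mathrm{tf}}$ can legitimately be estimated using a single generic point rather than requiring the bound everywhere on $X$. For Theorem \ref{thm-mai} one then specializes $\phi$ to an MRC fibration (which is almost holomorphic, with image $Y$ of pseudo-effective canonical bundle after resolving, by \cite{GHS03} and \cite{BDPP}), and for Corollary \ref{cor-mainnnn} one applies the same inequality to the Albanese map, using that the Albanese variety has trivial (hence pseudo-effective) canonical bundle and that $h^0(X,\Omega_X) = \dim \mathrm{Alb}(X) \geq \dim(\text{image of Albanese map})$.
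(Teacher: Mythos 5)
Your first step is fine: for any $p$ in the Zariski open set $X_1$ over which $\phi$ is a (smooth) morphism, part $(2)$ of Theorem \ref{thm-mainn} gives an $m$-dimensional subspace $\phi^*T_Y \subset T_{X,p}$ consisting of truly flat vectors, so $\dim V_{\mathrm{flat},p} \geq m = \dim Y$ and hence $n_{\mathrm{tf}}(X,g)_p \leq \dim X - \dim Y$ at every $p \in X_1$. The problem is the second step. You write
$$
n_{\mathrm{tf}}(X,g) = \dim X - \inf_{q \in X} \dim V_{\mathrm{flat},q} \leq \dim X - \dim V_{\mathrm{flat},p},
$$
but this inequality points the wrong way: since $\inf_{q\in X} \dim V_{\mathrm{flat},q} \leq \dim V_{\mathrm{flat},p}$ for any particular $p$, one in fact has $n_{\mathrm{tf}}(X,g) \geq \dim X - \dim V_{\mathrm{flat},p} = n_{\mathrm{tf}}(X,g)_p$. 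Exhibiting \emph{one} point $q$ with $\dim V_{\mathrm{flat},q} \geq \dim Y$ does not control the infimum over all of $X$, because the infimum could be attained at a point outside $X_1$ (e.g.\ on the non-smooth locus of $\phi$ or the indeterminacy locus). You flag this issue yourself at the end of the proposal, but you do not resolve it, and as written the chain of inequalities is incorrect.

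The paper closes exactly this gap by a semi-continuity argument. Because $\dim V_{\mathrm{flat},p}$ is defined by finitely many linear conditions depending continuously on $p$, it is upper semi-continuous in $p$; equivalently $n_{\mathrm{tf}}(X,g)_p$ is lower semi-continuous. Hence the set of points $p$ at which $n_{\mathrm{tf}}(X,g)_p$ attains its maximum value $n_{\mathrm{tf}}(X,g)$ is a \emph{nonempty open} subset of $X$, and since $X_1$ is a dense Zariski open subset, the two must intersect. Choosing $p$ in this intersection, one has simultaneously $n_{\mathrm{tf}}(X,g)_p = n_{\mathrm{tf}}(X,g)$ and $p \in X_1$, and now your first step yields $n_{\mathrm{tf}}(X,g) = n_{\mathrm{tf}}(X,g)_p \leq \dim X - \dim Y$, which is what was to be shown. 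So the approach is the same as the paper's, but you must make the choice of $p$ more careful: it is not enough that $p$ be a generic point of $X_1$, you need it to also realize the extremal value of $n_{\mathrm{tf}}(X,g)_p$, and the openness of that locus is what guarantees such a $p$ exists inside $X_1$.
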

 
\begin{proof}[Proof of Theorem \ref{thm-sum}]
For the proof, we will use the arguments in the proof of Theorem \ref{thm-main} and Theorem \ref{thm-mainn}. 
For simplicity, we put $k:=n_{{\rm{tf}}}{(X, g)}$. 
We take a Zariski open set $Y_1$ in $Y$ such that $\phi: X_1=\phi^{-1}(Y_1) \to Y_1$ is a morphism 
(over which $\phi$  is actually smooth by Theorem \ref{thm-mainn}). 
The  invariant $n_{{\rm{tf}}}{(X, g)}_p$ is 
lower semi-continuous with respect to $p \in X$ in the classical topology (see Definition \ref{def-flat}). 
In particular, the condition $n_{{\rm{tf}}}{(X, g)}_p=k$ is an open condition. 
Hence we can find a point $p$ such that 
$$
n_{{\rm{tf}}}{(X, g)}_p=k=\max_{p \in X}n_{{\rm{tf}}}{(X, g)}_p \text{\quad  and \quad } p \in X_1. 
$$
It follows that tangent vectors $\{e_{i}\}_{i=1}^{m}$ in the horizontal direction are truly flat by (2) in Theorem \ref{r-thm-mainn} (see also Claim \ref{claim-truly}). 
In particular, the vector space $\phi^{*}T_Y=\Spn \langle \{e_{i}\}_{i=1}^{m} \rangle$ at $p$
is contained in $V_{{\rm{flat}},p}$. 
Therefore we obtain the desired inequality $m \leq n-k$. 
\end{proof}

In the rest of this subsection, 
we will check Theorem \ref{thm-mai} and Corollary \ref{cor-mainnnn}. 

\begin{theo}[=Theorem \ref{thm-mai}]\label{r-thm-mai}
Let $(X, g)$ be a compact K\"ahler manifold such that $X$ is projective and 
the holomorphic sectional curvature  is semi-positive. 
Let  $\phi: X \dashrightarrow Y$ be a MRC fibration of $X$. 
Then we have  
$$
\dim X -\dim Y \geq n_{{\rm{tf}}}{(X, g)}. 
$$
In particular, the manifold $X$ is rationally connected 
if $n_{{\rm{tf}}}{(X, g)}=\dim X$ $($which is satisfied if the holomorphic sectional curvature is quasi-positive$)$. 
\end{theo}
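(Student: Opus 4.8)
The plan is to deduce this from Theorem \ref{thm-sum}, whose hypotheses concern an arbitrary almost holomorphic map to a compact K\"ahler manifold with pseudo-effective canonical bundle; the only thing that needs to be checked is that an MRC fibration can be brought into exactly that form. First I would recall that an MRC fibration $\phi : X \dashrightarrow Y$ is an almost holomorphic map, uniquely determined up to birational equivalence, so after composing with a resolution of singularities of $Y$ we may assume that $Y$ is a smooth projective variety. Next, by \cite[Theorem 1.1]{GHS03} the base $Y$ of an MRC fibration is not uniruled, and by \cite{BDPP} a smooth projective variety that is not uniruled has pseudo-effective canonical bundle $K_Y$. Hence $(X,g)$, $Y$, and $\phi$ satisfy the hypotheses of Theorem \ref{thm-sum}, which immediately gives $\dim X - \dim Y \geq n_{{\rm{tf}}}(X,g)$.

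For the ``in particular'' clause, I would argue as follows. If $n_{{\rm{tf}}}(X,g) = \dim X$, then the inequality just obtained forces $\dim Y \leq 0$, so $Y$ is a point; by the defining property of the MRC fibration this means that $X$ is rationally connected. Finally, to see that quasi-positivity of the holomorphic sectional curvature implies $n_{{\rm{tf}}}(X,g) = \dim X$: at a point $p \in X$ where $H_g$ is strictly positive, any truly flat tangent vector $v \in T_{X,p}$ satisfies $R_g(v,\bar v, v,\bar v) = 0$ by definition, hence $H_g([v]) = 0$ whenever $v \neq 0$, which contradicts positivity of $H_g$ at $p$; thus $V_{{\rm{flat}},p} = \{0\}$ and therefore $n_{{\rm{tf}}}(X,g) \geq n_{{\rm{tf}}}(X,g)_p = \dim X$.

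Since Theorem \ref{thm-sum} is already available, there is essentially no real obstacle in this argument. The one point that requires a little care --- and it is an invocation of deep external input rather than a genuine difficulty --- is the pseudo-effectivity of $K_Y$, which rests on the non-uniruledness of MRC bases (\cite{GHS03}) combined with the characterization of pseudo-effective canonical bundles via non-uniruledness (\cite{BDPP}). All the substantive content of the statement lives in Theorem \ref{thm-sum}, and through it in Theorems \ref{thm-main} and \ref{thm-mainn}.
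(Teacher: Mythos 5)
Your proposal is correct and coincides with the paper's proof: the paper likewise derives the inequality by applying Theorem \ref{thm-sum} to the MRC fibration (citing \cite[Theorem 1.1]{GHS03} and \cite{BDPP} for the smoothness and pseudo-effectivity of the base, as recalled in the introduction), and concludes rational connectedness because $\dim Y = 0$. Your added verification that quasi-positivity forces $V_{{\rm{flat}},p}=\{0\}$ at a point of strict positivity is just spelling out a remark the paper leaves implicit.
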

\begin{proof}
We obtain the desired inequality by applying Theorem \ref{thm-sum} to MRC fibrations. 
The latter conclusion is obvious. 
Indeed, when $n_{{\rm{tf}}}{(X, g)}=\dim X$, 
the image $Y$ should be one point. 
This implies that  $X$ is rationally connected. 
\end{proof}

\begin{cor}[=Corollary \ref{cor-mainnnn}]\label{r-cor-mainnnn}
Let $(X, g)$ be a compact K\"ahler manifold 
with semi-positive holomorphic sectional curvature. 
Then we have  
$$
h^{0}(X, \Omega_X)\leq \dim X - n_{{\rm{tf}}}{(X, g)}. 
$$ 
In particular, we obtain $h^{0}(X, \Omega_X)=0$ 
if $n_{{\rm{tf}}}{(X, g)}=\dim X$ $($which is satisfied if the holomorphic sectional curvature is quasi-positive$)$. 
\end{cor}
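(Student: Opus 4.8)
The plan is to apply Theorem \ref{thm-sum} to (a smooth model of) the Albanese map of $X$ and then to show that the dimension of its image coincides with the irregularity $h^{0}(X,\Omega_X)$. Set $A:=\Alb(X)$, let $\alpha\colon X\to A$ be the Albanese map, and let $Y:=\alpha(X)\subseteq A$ be its image. Recall that $h^{0}(X,\Omega_X)=\dim A$ and that $\alpha^{*}\colon H^{0}(A,\Omega_A)\to H^{0}(X,\Omega_X)$ is an isomorphism. Choosing a resolution of singularities of $Y$, the map $\alpha$ induces an almost holomorphic map $\phi\colon X\dashrightarrow Y'$ onto a compact K\"ahler manifold $Y'$ carrying a generically immersive morphism $Y'\to A$. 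The cotangent bundle of $Y'$ is then generically globally generated by the pullbacks of the holomorphic $1$-forms of $A$, so $K_{Y'}=\det\Omega_{Y'}$ is effective, in particular pseudo-effective. Hence Theorem \ref{thm-sum} applies to $\phi$, and gives
\[
\dim X-\dim Y=\dim X-\dim Y'\ \ge\ n_{{\rm{tf}}}(X,g).
\]

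It now suffices to prove $\dim Y=\dim A$, since then $h^{0}(X,\Omega_X)=\dim A=\dim Y\le\dim X-n_{{\rm{tf}}}(X,g)$, which is the assertion. By Theorem \ref{thm-mainn} applied to $\phi$ we have $\nu(Y')=0$; combined with $\kappa(Y')\ge 0$ (because $K_{Y'}$ is effective), this forces $\kappa(Y')=0$, hence $\kappa(Y)=0$. By Ueno's structure theorem for subvarieties of complex tori, the Albanese image $Y=\alpha(X)$ is then a translate of a subtorus $B\subseteq A$; in particular $Y$ is smooth and $Y\cong B$. The isomorphism $\alpha^{*}$ factors as $H^{0}(A,\Omega_A)\to H^{0}(B,\Omega_B)\xrightarrow{\ \alpha^{*}\ }H^{0}(X,\Omega_X)$, so the restriction $H^{0}(A,\Omega_A)\to H^{0}(B,\Omega_B)$ is injective, and comparing dimensions yields $\dim B=\dim A$, i.e. $\dim Y=\dim A$. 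Finally, if $n_{{\rm{tf}}}(X,g)=\dim X$ then $h^{0}(X,\Omega_X)\le 0$, hence $h^{0}(X,\Omega_X)=0$; and $n_{{\rm{tf}}}(X,g)=\dim X$ holds whenever $H_g$ is quasi-positive, as already noted in the excerpt.

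The main obstacle is precisely this identification $\dim Y=h^{0}(X,\Omega_X)$: Theorem \ref{thm-sum} by itself only bounds $\dim Y$, and a priori $\dim Y$ could be strictly smaller than $\dim\Alb(X)=h^{0}(X,\Omega_X)$ (as already happens for curves of genus $\ge 2$). What rules this out is the extra input $\nu(Y')=0$ coming from Theorem \ref{thm-mainn}, together with Ueno's theorem, which upgrades the Albanese image to a subtorus of $\Alb(X)$; the remaining points — pseudo-effectivity of $K_{Y'}$ and the linear-algebra comparison of spaces of holomorphic $1$-forms — are routine.
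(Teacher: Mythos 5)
Your proof is correct and, interestingly, it is more careful than the paper's own argument, filling in a point the paper leaves implicit.

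The paper simply applies Theorem~\ref{thm-sum} to $\alpha\colon X\to\Alb(X)$, taking $Y=\Alb(X)$ (whose canonical bundle is trivial, hence pseudo-effective), and concludes at once from $\dim\Alb(X)=h^{0}(X,\Omega_X)$. However, Theorem~\ref{thm-sum} is stated for an almost holomorphic (in particular dominant) map $\phi\colon X\dashrightarrow Y$, and the Albanese map need not be surjective onto $\Alb(X)$ a priori. The paper's proof therefore tacitly uses the surjectivity of $\alpha$. Your argument does not assume this: you apply the theorems to a desingularization $Y'$ of the Albanese image $\alpha(X)$, verify $K_{Y'}$ effective by the generic global generation of $\Omega_{Y'}$ from $1$-forms of the torus, invoke Theorem~\ref{thm-mainn} to get $\nu(Y')=0$ hence $\kappa(Y')=0$, and then use Ueno's structure theorem to conclude that $\alpha(X)$ is a subtorus, which by the universal property of the Albanese must be all of $\Alb(X)$. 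This chain of reasoning precisely closes the gap, at the cost of invoking Ueno's theorem and the standard inequality $\kappa\le\nu$. Both routes deliver the same bound; yours establishes the surjectivity of the Albanese map explicitly rather than assuming it, and it is exactly the mechanism (Theorem~\ref{thm-mainn} plus Ueno) one would cite to make the paper's shorter proof airtight.
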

\begin{proof}
We consider the   Albanese map  $\alpha: X \to \Alb(X)$ of $X$. 
Then the canonical bundle $K_{\Alb(X)}$ is trivial, 
and thus the assumptions in Theorem \ref{thm-sum} are satisfied. 
We obtain the desired conclusion by $\dim Y =\dim \Alb(X)=h^{0}(X, \Omega_X)$. 
\end{proof}

\subsection{Proof of Corollary \ref{cor-main}}
In this subsection, we will obtain Corollary \ref{cor-main} as an application of Theorem \ref{thm-main}. 

\begin{cor}[=Corollary \ref{cor-main}]\label{r-cor-main}
Let $X$ be a compact K\"ahler manifold with semi-positive holomorphic sectional curvature. 
Then the followings hold\,$:$
\vspace{0.2cm}\\
\quad $\bullet$ All the statements of Conjecture \ref{conj-str} hold  
in the case of $X$ being a surface. 
\vspace{0.1cm}\\
\quad $\bullet$ The  statement $(1)$ of Conjecture \ref{conj-str} holds 
if $X$ is projective and a MRC fibration of $X$ can be chosen to be a morphism. 
\end{cor}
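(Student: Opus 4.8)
The plan is to deduce both bullets from Theorem \ref{thm-main} by applying it to suitably chosen morphisms, invoking surface classification only to decide which morphisms are available. For the second bullet, let $X$ be projective and assume its MRC fibration is realized by a morphism $\phi\colon X\to Y$ with $Y$ smooth. By \cite[Theorem 1.1]{GHS03} together with \cite{BDPP} the canonical bundle $K_{Y}$ is pseudo-effective, so Theorem \ref{thm-main} applies and shows at once that $\phi$ is smooth and locally trivial, that its fiber $F$ is projective and rationally connected (the latter being the defining property of an MRC fibration), and that $Y$ carries a flat K\"ahler metric and hence a finite \'etale cover $T\to Y$ by a complex torus; forming $X^{*}=X\times_{Y}T$ and using its universal property produces the commutative square of Conjecture \ref{conj-str}(1). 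This step is routine once Theorem \ref{thm-main} is in hand.

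For the surface case I would first note that the MRC fibration of a smooth projective surface can always be chosen to be a morphism out of $X$ itself: if $X$ is rationally connected the base is a point; if $X$ is not uniruled the MRC map is the identity $X\to X$; and in the remaining (uniruled, not rationally connected) case the base $Y$ is a smooth curve of genus $\geq 1$ (genus $0$ is impossible, since a rationally connected fibration over $\mathbb{P}^{1}$ would force $X$ rationally connected), and a rational map from a smooth surface to a curve of positive genus is automatically a morphism, as the exceptional curve of any blow-up resolving it would have to map nonconstantly to a positive-genus curve. So the previous bullet already gives Conjecture \ref{conj-str}(1) for projective surfaces, and it remains to verify Conjecture \ref{conj-str}(2). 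If $X$ is rationally connected, then $\pi_{1}(X)=1$ and $X_{\rm{univ}}=X=F$. If $X$ is not uniruled, see the next paragraph. In the remaining case, Theorem \ref{thm-main} exhibits the MRC morphism as a locally trivial $\mathbb{P}^{1}$-fibration over a flat curve, necessarily an elliptic curve $E$; since the horizontal foliation $\phi^{*}T_{E}\subset T_{X}$ is a line field, hence automatically integrable, Remark \ref{rem-main} gives $X_{\rm{univ}}\cong\mathbb{C}\times\mathbb{P}^{1}$, and therefore $\pi_{1}(X)\cong\pi_{1}(E)\cong\mathbb{Z}^{2}$.

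The non-uniruled surfaces are treated uniformly, and this covers every non-projective compact K\"ahler surface, since uniruled surfaces are ruled and hence projective: by the Enriques--Kodaira classification such an $X$ has $\kappa(X)\geq 0$, so $K_{X}$ is pseudo-effective, and applying Theorem \ref{thm-main} to $\mathrm{id}_{X}\colon X\to X$ forces the holomorphic sectional curvature of $g$ to vanish identically. Hence $X$ is flat and admits a finite \'etale cover by a complex torus, so Conjecture \ref{conj-str} holds for such $X$ with $Y=X$, $F$ a point and $X_{\rm{univ}}=\mathbb{C}^{2}$; in particular $X$ is minimal (a flat surface contains no rational curve), which rules out K3, Enriques and properly elliptic surfaces as well as nontrivial blow-ups. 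I expect the delicate points to be the surface-classification bookkeeping -- that the MRC fibration really is a morphism and that the non-uniruled surfaces occurring here are exactly the flat ones -- together with the topological verifications in Conjecture \ref{conj-str}(2), rather than anything in the curvature argument, which is entirely carried by Theorem \ref{thm-main} and Remark \ref{rem-main}.
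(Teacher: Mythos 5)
Your proof is correct and reaches every conclusion the paper does, but it organizes the surface case differently and substitutes one key observation. The paper splits on whether the holomorphic sectional curvature is identically zero or not: in the latter case it invokes Berger's formula expressing scalar curvature as an average of $H_g$ over $\mathbb{P}(T_{X,p}^\vee)$ to conclude that the total scalar curvature is positive, hence $\int_X c_1(K_X)\wedge\omega^{n-1}<0$ and $K_X$ is not pseudo-effective, which by the Enriques--Kodaira classification forces $X$ to be rational or ruled over a curve of genus $\geq 1$, to which Theorem~\ref{thm-main} and Remark~\ref{rem-main} are then applied exactly as you do. You instead split on uniruledness and handle the non-uniruled (equivalently $K_X$ pseudo-effective) case by applying Theorem~\ref{thm-main} to $\mathrm{id}_X\colon X\to X$, noting that part~(3) then forces $H_g\equiv 0$; this nicely bypasses the scalar curvature computation by reusing the full strength of the main theorem, at the cost of being less elementary. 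You also make explicit --- which the paper leaves implicit by passing directly to the ruling $X\to B$ --- that the MRC fibration of a smooth projective surface can always be realized as a morphism (rational maps from a smooth surface to a positive-genus curve extend), letting you quote the second bullet to prove the first in the projective case. Both routes verify Conjecture~\ref{conj-str}(2) via the integrability of the rank-one foliation $\phi^{*}T_B$ and Remark~\ref{rem-main}. In short: same machinery, a slightly different case decomposition, and a substitution of ``apply Theorem~\ref{thm-main} to the identity'' for ``scalar curvature is positive'' --- both legitimate, with yours trading elementarity for economy of auxiliary facts.
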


\begin{proof}[Proof of Corollary \ref{cor-main}]

We consider a compact K\"ahler manifold $X$
with semi-positive sectional curvature. 
If the holomorphic sectional curvature is identically zero, 
then $X$ itself admits a finite \'etale cover by a complex torus by \cite{Igu54} 
(see also \cite[Proposition 2.2]{HLW16} and \cite{Ber66}). 
Then there is nothing to prove. 

From now on, we consider the case where $H_g$ is semi-positive, 
but not identically zero.
In this case, we can conclude that the canonical bundle $K_X$ is not pseudo-effective. 
Indeed, it follows that the scalar curvature $S$ of the K\"ahler metric $g$ is positive  
since the scalar curvature $S$ can be expressed 
as the integral of the holomorphic sectional curvature on 
the projective space $\mathbb{P}(T_{X,p}^\vee)$ (for example, see \cite{Ber66}). 
Then we can see that the canonical bundle $K_X$ is not pseudo-effective
by the formula 
$$
\int_{X} c_1(K_X) \wedge \omega^{n-1} = - \frac{1}{n\pi}\int_{X} S \, \omega^n < 0, 
$$
where $\omega$ is the K\"ahler form associated to $g$.

To check the first statement, we assume that $X$ is a compact K\"ahler surface. 
By the classification of compact complex surfaces, 
it can be seen that a K\"ahler surface such that $K_X$ is not pseudo-effective 
is a minimal rational surface or  a ruled surface over a curve of genus $\geq 1$. 
It is sufficient to consider the case of $X$ being a ruled surface over 
a curve of genus $\geq 1$ since a minimal rational surface is rationally connected. 
In this case, we can conclude that the ruling $X \to B$ is minimal 
(that is, a submersion) and the base is elliptic curve, 
by applying Theorem \ref{thm-main}.  
The direct summand $\phi^* T_B $ is integrable 
since the rank of $\phi^* T_B $ is one (see Remark \ref{rem-main}). 
Hence the universal cover can be shown to be 
the product of $\mathbb{C} \times \mathbb{P}^1$ 
by the classical Ehresmann theorem. 

To check the second statement, we consider a smooth projective variety 
whose holomorphic sectional curvature is semi-positive, but not identically zero. 
Then, since $K_Y$ not pseudo-effective by the first half argument, 
a MRC fibration $\phi: X \dashrightarrow Y$ is non-trivial. 
Then the image $Y$ is not uniruled by \cite[Theorem 1.1]{GHS03} and 
the canonical bundle $K_Y$ of $Y$ is pseudo-effective by \cite{BDPP}. 
Therefore we can directly apply Theorem \ref{thm-main} if a MRC fibration can be chosen to be a morphism. 
Then the statement $(1)$ of Conjecture \ref{conj-str} is obvious. 
\end{proof}

In the rest of subsection, we give a remark on smooth projective varieties with nef anti-canonical bundle. 
Even if a compact K\"ahler manifold $X$ has the semi-positive holomorphic sectional curvature, 
the anti-canonical bundle $K_X^{\vee}$ is not necessarily nef (for example, see \cite[Example 3.6]{Yan16}). 
However it is worth to mention that we can confirm that Conjecture \ref{conj-str} holds 
when $X$ is projective and $X$ has the nef anti-canonical bundle, 
by using Theorem \ref{thm-main} and the deep structure theorem proved by Cao-H\"oring in \cite{CH17}.
\begin{cor}\label{cor-CH}
Let $(X, g)$ be a compact K\"ahler manifold such that $X$ is projective and the holomorphic sectional curvature is semi-positive. 
Further we assume that the anti-canonical bundle $K_X^\vee$ is nef. 
Then Conjecture \ref{conj-str} can be affirmatively solved. 
\end{cor}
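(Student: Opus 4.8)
The plan is to combine Theorem~\ref{thm-main} with the structure theorem of Cao--H\"oring \cite{CH17} and then finish by a covering-space argument. First I would dispose of the trivial case: if the holomorphic sectional curvature $H_g$ is identically zero, then $X$ admits a finite \'etale cover by a complex torus (by \cite{Igu54}), and Conjecture~\ref{conj-str} holds with $F$ a point and $m=\dim X$. So assume $H_g\not\equiv 0$; exactly as in the proof of Corollary~\ref{cor-main}, the scalar curvature of $g$ is then positive, so $K_X$ is not pseudo-effective, a MRC fibration $\phi\colon X\dashrightarrow Y$ is non-trivial, $Y$ is not uniruled by \cite{GHS03}, and $K_Y$ is pseudo-effective by \cite{BDPP}. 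Since $-K_X$ is nef, the deep result of Cao--H\"oring \cite{CH17} allows us to choose the MRC fibration to be an everywhere-defined morphism $\phi\colon X\to Y$, so the hypothesis of the second bullet of Corollary~\ref{cor-main} is satisfied. Applying Theorem~\ref{thm-main} to $\phi\colon X\to Y$ (with $X$ projective) then yields statement~$(1)$ of Conjecture~\ref{conj-str}: $\phi$ is a smooth locally trivial morphism with projective rationally connected fibers $F$, and $(Y,g_Y)$ has identically zero holomorphic sectional curvature, so there is a finite \'etale cover $p\colon T\to Y$ by a complex torus $T$; forming $X^*:=X\times_Y T$ produces the commutative diagram in Conjecture~\ref{conj-str}, with $X^*\to T$ locally trivial with fiber $F$ and $X^*\to X$ finite \'etale.

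It remains to establish statement~$(2)$, namely $X_{\rm univ}\cong\mathbb{C}^m\times F$ with $m=\dim Y$. Here I would pull back the locally trivial fibration $X^*\to T$ along the universal cover $q\colon\mathbb{C}^m\to T$ of the complex torus $T$, obtaining a holomorphic fiber bundle $\widetilde X\to\mathbb{C}^m$ with compact fiber $F$ and structure group the complex Lie group $\operatorname{Aut}(F)$. Since $\mathbb{C}^m$ is Stein and contractible, Grauert's Oka principle (cf. \cite{Hor07}) gives that this bundle is holomorphically trivial, so $\widetilde X\cong\mathbb{C}^m\times F$. As $F$ is rationally connected it is simply connected, hence $\widetilde X$ is simply connected; and $\widetilde X\to X^*$ is a covering map (the pull-back of $q$), so it is the universal cover of $X^*$, and therefore of $X$ since $X^*\to X$ is finite \'etale. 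This gives $X_{\rm univ}\cong\mathbb{C}^m\times F$. Finally, from the fiber bundle $F\to X^*\to T$ with $\pi_1(F)=1$ and $\pi_2(T)=0$ one gets $\pi_1(X^*)\cong\pi_1(T)\cong\mathbb{Z}^{2m}$; since $X^*\to X$ is finite \'etale, $\pi_1(X^*)$ has finite index in $\pi_1(X)$, and intersecting its finitely many conjugates yields a normal finite-index subgroup of $\pi_1(X)$ isomorphic to $\mathbb{Z}^{2m}$, so $\pi_1(X)$ is an extension of a finite group by $\mathbb{Z}^{2m}$. (Alternatively, statement~$(2)$ follows from the full decomposition theorem in \cite{CH17}: a finite \'etale cover of $X$ splits as a product of a complex torus, Calabi--Yau and hyperk\"ahler manifolds, and a rationally connected manifold, and the flatness of $Y$ provided by Theorem~\ref{thm-main} forces the Calabi--Yau and hyperk\"ahler factors to be trivial.)

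The genuinely deep input is Cao--H\"oring's theorem that a MRC fibration of a projective manifold with nef anticanonical bundle is a morphism; granting this, the rest is essentially bookkeeping built on Theorem~\ref{thm-main}. The point requiring the most care is the triviality of the pulled-back fiber bundle over $\mathbb{C}^m$: the fiber $F$ is only a rationally connected projective manifold, so $\operatorname{Aut}(F)$ need be neither a linear algebraic group nor connected, but it is a complex Lie group, which is all the Oka principle requires over a contractible Stein base. One should also stay alert to the birational-versus-biregular and Galois-versus-non-Galois subtleties when matching $F$ and $Y$ with the factors produced by \cite{CH17}; in the covering-space argument above these are circumvented, since one works throughout with the honest locally trivial fibration $\phi\colon X\to Y$ of statement~$(1)$.
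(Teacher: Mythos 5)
Your proof is correct and rests on the same two pillars as the paper's: Cao--H\"oring for choosing the MRC fibration as a morphism, and Theorem~\ref{thm-main} for the flatness of $Y$. Where you genuinely diverge is in statement~$(2)$. The paper cites \cite{CH17} not only for the existence of a MRC \emph{morphism} but also for the decomposition $X_{\rm univ}\cong F\times Y_{\rm univ}$ of the universal cover (this is the heart of Cao--H\"oring's decomposition theorem, which a priori produces $\mathbb{C}^q\times(\text{CY/HK factors})\times Z$; the flatness of $Y$ from Theorem~\ref{thm-main} is then what kills the Calabi--Yau and hyperk\"ahler factors, a point the paper leaves implicit). You instead derive the splitting of the universal cover in a self-contained way: pull back the locally trivial fibration $X^*\to T$ along the universal cover $\mathbb{C}^m\to T$, observe that the resulting holomorphic fiber bundle over $\mathbb{C}^m$ has structure group the complex Lie group $\operatorname{Aut}(F)$, and invoke Grauert's Oka principle over the contractible Stein base $\mathbb{C}^m$ to trivialize it. This is a legitimate alternative that only needs the weaker input ``MRC fibration is a morphism'' from \cite{CH17}; what it costs is the extra machinery (Bochner--Montgomery to see $\operatorname{Aut}(F)$ as a complex Lie group, Grauert's theorem) that the paper sidesteps by quoting the full Cao--H\"oring decomposition. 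Your fundamental-group bookkeeping at the end (long exact sequence of the fibration, finite-index normal core) is also correct, and indeed your parenthetical alternative route is essentially what the paper does. Both arguments are sound.
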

\begin{proof}
By the structure theorem of \cite{CH17}, we can choose a MRC fibration $\phi: X \to Y$ to be a (locally trivial) morphism. 
Further we have the decomposition $X_{\rm{univ}}\cong F \times Y_{\rm{univ}}$, 
where $F$ is the rationally connected fiber of $\phi$. 
By applying Theorem \ref{thm-main} to this MRC fibration $\phi: X \to Y$, 
we can see that $Y$ admits a finite \'etale  cover $T \to Y$ by an abelian variety $T$. 
This finishes the proof. 
\end{proof}

\section{Open problems related to semi-positive sectional curvature}\label{Sec-4}
In this section, we discuss open problems related to the geometry of semi-positive sectional curvature. 

The first problem is concerned with Conjecture \ref{conj-str}. 
If (1) and (2) in the following problem are affirmatively solved, 
then Conjecture \ref{conj-str} for smooth projective varieties 
can be obtained from Theorem \ref{thm-main}. 

\begin{prob}\label{prob-main}
Let $(X, g)$ be a compact K\"ahler manifold such that $X$ is projective and 
the holomorphic sectional curvature $H_g$ is  semi-positive. 
Let $\phi : X \dashrightarrow Y$ be a MRC fibration of $X$. 
\begin{itemize}
\item[(1)] Can we choose a MRC fibration of $X$ to be a morphism?
\item[(2)] Does $\phi^{*}T_Y$  determine an integrable foliation? 
\item[(3)] Does the fiber $F$ admit a K\"ahler metric $g_F$ such that $n_{{\rm{tf}}}(F, g_F)=\dim F$?
\item[(4)] Does the equality $\dim X = \dim Y + n_{{\rm{tf}}}(X, g)$ hold?
\end{itemize}
\end{prob}

The following problem seems to give a strategy to study Conjecture \ref{conj-str} for compact K\"ahler manifolds. 
If the following problem can be solved, 
we can   apply Theorem \ref{thm-main} and Conjecture \ref{conj-str} to MRC fibrations of  the fiber $Z$.

\begin{prob}\label{prob-mainn}
Let $(X, g)$ be a compact K\"ahler manifold 
with the semi-positive holomorphic sectional curvature $H_g$. 
After we take a suitable finite \'etale cover $X^* \to X$, 
we consider the  Albanese map $\alpha: X^* \to \Alb(X^*)$. 
\begin{itemize}
\item[(1)] Is the fiber $Z$ projective? 
\item[(2)] Is the holomorphic sectional curvature 
$H_{g_Z}$ of the induced metric $g_Z$ semi-positive? 
\end{itemize}
\end{prob}

When $X$ admits a K\"ahler metric with quasi-positive holomorphic sectional curvature, 
it seems to be natural to expect that $X$ is automatically projective 
(cf. \cite[Theorem 1.7]{Yan17a}). 
The following problem, which was posed by Yang in a private discussion, 
gives a generalization of this expectation. 
Also, it is an interesting problem to consider 
rationally connectedness or holomorphic sectional curvature 
from the viewpoint of (uniform) RC positivity introduced by Yang. 
See \cite{Yan18b} for vanishing theorems and 
\cite[Theorem 1.7, Conjecture 1.9]{Yan18c} for rationally connectedness.

\begin{prob}\label{prob-proj}
Let $(X, g)$ be a compact K\"ahler manifold $($or more generally a hermitian manifold$)$
with the semi-positive holomorphic sectional curvature $H_g$. 
Assume that $X$ has no  truly flat vector at some point of $X$ 
$($or  $H_g$ is quasi-positive$)$. 
\begin{itemize}
\item[(1)] Can we obtain $h^{i}(X, \mathcal{O}_X)=0$ for any $i>0$? 
\item[(2)] Is $X$ automatically projective and rationally connected? 
\end{itemize}
\end{prob}


\end{document}